\newcommand{\R}{{\mathbb R}}
\newcommand{\mc}[1]{\mathcal {#1}}
\newcommand{\dg}{{\dagger}}
\newcommand{\n}{{*_N}}
\newcommand{\1}{{*_L}}
\newcommand{\m}{{*_M}}
\newcommand{\f}{{*_R}}
\begin{document}

\title{ Weighted Moore-Penrose inverses of arbitrary-order tensors
}


\author{
Ratikanta Behera$^{1}$ \and
Sandip Maji$^2$ \and 
R. N. Mohapatra$^1$       
}

\authorrunning{R. Behera et al.} 

\institute{Ratikanta Behera \at
\email{ratikanta.behera@ucf.edu} \and
R. N. Mohapatra  \at  
 \email{Ram.Mohapatra@ucf.edu} 
\and  Sandip Maji \at 
  \email{majisandip.378@gmail.com}
  \newline\newline   
  \begin{enumerate}
      \item Department of Mathematics, University of Central Florida, Orlando, FL 32816, USA.
\newline  
\item Department of Mathematics and Statistics, Indian Institute of Science Education and Research Kolkata, \\West Bengal, 741242, India
  \end{enumerate}
}
\date{Received: date / Accepted: date}

\maketitle

\begin{abstract}
Within the field of multilinear algebra, inverses and generalized inverses of tensors based on the  Einstein product have been investigated over the past few years. In this paper, we explore the singular value decomposition and full-rank decomposition of arbitrary-order tensors using  {\it reshape} operation. Applying range and null space of tensors along with the reshape operation; we further study the Moore-Penrose inverse of tensors and their cancellation properties via the Einstein product. Then we discuss weighted Moore-Penrose inverses of arbitrary-order tensors using such product. Following a specific algebraic approach, a few characterizations and representations of these inverses are explored. In addition to this, we obtain a few necessary and sufficient conditions for the reverse-order law to hold for weighted Moore-Penrose inverses of arbitrary-order tensors.
\end{abstract}

\section{Introductions}
\subsection{Background and motivation}
\label{sec1}
Tensors or hypermatrix are multidimensional generalizations of vectors and matrices, and have attracted tremendous interest in recent years (see \cite{kolda,Marcar,loan,Qil05,Sho13}). Indeed, multilinear systems are closely related to tensors and such systems are encountered in a number of fields of practical interest, i.e., signal processing  (see \cite{LalmV00,SidLaP17,CopBol89}), scientific computing (see \cite{BeyMo05,ShiWeL13,BraliNT13}), data mining \cite{Chew07}, data compression and retrieval of large structured data (see  \cite{SilL08,MaoChLY18}). 
Further, the Moore-Penrose  inverse of tensors plays an important role in  solving such multilinear systems (see \cite{bm,JinBa17,ma2019}) and the reverse-order law for the Moore-Penrose inverses of tensors yields a class of interesting problems that are fundamental in the theory of generalized inverses of tensors (see \cite{PanRad18,JR_rev}). In view of these, multilinear algebra is drawing more and more attention from researchers (see \cite{JinBa17,Bader:2006,Marcar,Kru77,LalmV00}), specifically, the recent findings in (see \cite{bm,BraliNT13,weit2,PanRad18,psmv18,sun}), motivate us to study this subject in the framework of arbitrary-order tensors.

Let $\mathbb{C}^{I_1\times\cdots\times I_N} (\mathbb{R}^{I_1\times\cdots\times I_N})$  be the set of  order
$N$ and dimension $I_1 \times \cdots \times I_N$ tensors over the complex (real)
field $\mathbb{C}(\mathbb{R})$. Let $\mc{A} \in \mathbb{C}^{I_1\times\cdots\times I_N}$ be a multiway array with  $N$-th order tensor, and $I_1, I_2, \cdots, I_N$ be dimensions of the first, second,$\cdots$, $N$th way,
 respectively. Indeed, a matrix is a second order tensor, and a vector is a first order tensor. We  denote ${\R}^{m \times n}$ to be the  set of all ${m \times n}$ matrices with real entries. Note that throughout the paper, tensors are represented in calligraphic letters like  $\mc{A}$, and the notation $(\mc{A})_{i_1...i_N}= a_{i_1...i_N}$ represents the scalars. Each entry of $\mc{A}$ is denoted by $a_{i_1...i_N}$.
The Einstein product (see \cite{ein}) $ \mc{A}\n\mc{B} \in \mathbb{C}^{I_1\times\cdots\times
I_M \times K_1 \times\cdots\times K_L }$ of tensors $\mc{A} \in \mathbb{C}^{I_1\times\cdots\times I_M \times J_1
\times\cdots\times J_N }$ and $\mc{B} \in
\mathbb{C}^{J_1\times\cdots\times J_N \times K_1 \times\cdots\times
K_L }$   is defined
by the operation $\n$ via
\begin{equation}\label{Eins}
(\mc{A}\n\mc{B})_{i_1...i_M k_1...k_L}
=\displaystyle\sum_{j_1...j_N}a_{{i_1...i_M}{j_1...j_N}}b_{{j_1...j_N}{k_1...k_L}}.
\end{equation}
The Einstein product is not commutative but associative, and distributes with respect to tensor addition. Further, cancellation does not work but there is a multiplicative identity tensor $\mc{I}$. 
This type of product of tensors is  used in the study of the theory of relativity \cite{ein} and also used in the area of continuum mechanics \cite{lai}.

On the other hand, one of the most successful developments in the world of linear algebra is the concept of Singular Value Decomposition (SVD) of matrices \cite{BenGr74}. This concept gives us important information about a matrix such as its rank, an orthonormal basis for the column or row space, and reduction to a diagonal form \cite{Tian04}. Recently this concept is also used in low rank matrix approximations \cite{Gra04,ishteva2011best,ye2005generalized}. Since tensors are natural multidimensional generalizations of matrices, there are many applications involving arbitrary-order tensors. Further,  the problem of decomposing tensors is approached in a variety of ways by extending the SVD, and extensive studies have exposed many aspects of such decomposition and its applications ( see for example \cite{CheQZ17,kolda,Kru77,LalmV00,SidLaP17,LIANG2018}). However, the existing framework of SVD of tensors appears to be insufficient and/or inadequate in several situations. 

The aim of this paper is to present a proper generalization of the SVD of arbitrary-order tensors under Einstein tensor product. In fact, the existing form \cite{BraliNT13} of the SVD is well suited for square tensors, which is defined as follows:

\begin{definition}{(Definition 2.8, \cite{BraliNT13})}: 
The transformation defined as; \\
$f : \mathbb{T}_{I,J,I,J}(\mathbb{R}) \longrightarrow \mathbb{M}_{IJ,IJ}(\mathbb{R})$ with $f(\mc{A}) = A$ and defined component wise as;
\begin{equation}
\begin{tikzcd}
(\mc{A})_{ijij} \arrow[r, "f"] & (A)_{[i+(j-1)I][i+(j-1)I]}
\end{tikzcd}
\end{equation}
where $\mathbb{T}_{I,J,I,J}(R) = \{ \mc{A} \in \mc{R}^{I\times J \times I \times J}~:~det(f(\mc{A})) \neq 0 \}$. In general for any even order tensor, the transformation is defined as;
$f : \mathbb{T}_{I_1,...,I_N,I_1,...,J_N}(\mathbb{R}) \longrightarrow \mathbb{M}_{I_1...I_N,J_1...J_N}(\mathbb{R})$ 
\begin{equation}\label{11}
\begin{tikzcd}
  (\mc{A})_{i_1...i_N j_1...j_N} \arrow[r, "f"] & (A)_{\left[i_1+\sum_{k=2}^{N} ({i_k - 1}) \prod_{l=1}^{k-1} I_l][j_1+\sum_{k=2}^{N} (j_k-1) \prod_{l=1}^{k-1} J_l\right]}
\end{tikzcd}
\end{equation}
\end{definition}
Using the above Definition and Theorem 3.17 in \cite{BraliNT13}, we obtain the SVD of a tensor $\mc{A}\in \mathbb{R}^{I\times J \times I \times J}$, which can be extended only to any square tensor, i.e., for  $\mc{A}\in \mathbb{R}^{I_1\times I_2 \times \cdots  \times I_N \times I_1\times I_2 \times \cdots  \times I_N}$. 
Extension of the SVD for an arbitrary-order tensor using this method \cite{BraliNT13} is impossible, since $f$ is not a homomorphism for even-order and/or arbitrary-order tensors. In fact, the Einstein product is not defined for the following two even-order tensors,  $\mc{A}\in \mathbb{R}^{I_1\times I_2 \times J_1\times J_2} $ and $\mc{B}\in \mathbb{R}^{I_1\times I_2 \times J_1 \times J_2} $, i.e., $\mc{A}*_2\mc{B}$ is not defined. 
Therefore, our aim in this paper is to find the SVD for any arbitrary order tensors using {\it reshape} operation, which is discussed in the next section.

In addition, recently there has been increasing interest in analyzing inverses and generalized inverses of tensors based on different tensor products (see\cite{SahBe20,Wei18,JinBa17,BraliNT13,sun}). The representations and properties of the ordinary tensor inverse were introduced in \cite{BraliNT13}. This interpretation is extended to the Moore-Penrose inverse of tensors in \cite{sun} and investigated for a few characterizations of different generalized inverses of tensors via Einstein product in \cite{bm}. Appropriately, Behera and Mishra \cite{bm} posed the open question: {\it ``Does there exist a full rank decomposition of tensors ? If so, can this be used to compute the Moore-Penrose inverse of a tensor''?}  It is worth mentioning that Liang and Zheng \cite{LIANG2018} investigated this question and discuss the computation of Moore-Pensore inverse of tensors using full rank decomposition. Here we extend this work for weighted Moore-Pensore inverse using {\it reshape} operation. 
When investigating on the weighted Moore-Pensore inverse, we find some interesting characterization, which is discussed in the next Section.

Recently, Panigrahy and Mishra \cite{pami1} investigated the Moore-Penrose inverse of a product of two tensors via Einstein product. Using such theory of Einstein product, Stanimirovic, et al. \cite{psmv18} also introduced some basic properties of the range and null space of multidimensional arrays, and the effective definition of the tensor rank, termed as reshaping rank. In this respect, Panigrahy et al. \cite{PanRad18} obtained a few necessary and sufficient conditions for the reverse order law for the Moore-Penrose inverses of tensors, which can be used to simplify various tensor expressions that involve inverses of tensor products \cite{dingwei}. Since then, many authors investigate the reverse order law for various classes of generalized inverses of tensors \cite{chetheory,JR_rev,Mispa18}. At the same time, the representations of the weighted Moore-Penrose inverse \cite{weit2} of an even-order tensor was introduced via the Einstein product. In this context, we focus our attention on exploring some characterizations and representation of weighted Moore-Penrose inverses of arbitrary-order tensors.

In this paper, we study the weighted Moore-Penrose inverse  of an arbitrary-order tensor. This study can lead to the enhancement of the computation of SVD and full rank decomposition of arbitrary-order tensor using {\it reshape} operation. With that in mind, we discuss some identities involving the weighted Moore-Penrose inverses of tensors and then obtain a few necessary and sufficient conditions of the reverse order law for the weighted Moore-Penrose inverses of arbitrary-order tensors via the Einstein product.

\subsection{Outline}
We organize the paper as follows: In the next subsection, we introduce some notations and definitions which are helpful in proving the main results of this paper. In Section 2, we provide the main results of the paper. In order to do so, we introduce SVD and full rank decomposition of an arbitrary-order tensor using reshape operation. Within this framework, the Moore-Penrose and the generalized weighted Moore-Penrose inverse for arbitrary-order tensor is defined. Furthermore, we obtain several identities involving the weighted Moore-Penrose inverses of tensors via Einstein product. The Section 3 contains a few necessary and sufficient conditions of the reverse-order law for the weighted Moore-Penrose inverses of tensors.

\subsection{Notations and definitions}

For convenience, we first briefly explain a few essential facts about the Einstein product of tensors, which are found in \cite{bm,BraliNT13,sun}.  For a tensor $ \mc{A}=(a_{{i_1}...{i_M}{j_1}...{j_N}})
 \in \mathbb{C}^{I_1\times\cdots\times I_M \times J_1 \times\cdots\times J_N}$, the tensor $\mc{B}
  =(b_{{j_1}...{j_N}{i_1}...{i_M}}) \in
   \mathbb{C}^{J_1\times\cdots\times J_N
\times I_1 \times\cdots\times I_M}$ is said to be conjugate transpose of $\mc{A}$, if $ b_{{j_1}...{j_N}{i_1}...{i_M}} =\overline{a}_{{i_1}...{i_M}{j_1}...{j_N}}$ and $\mc{B}$ is denoted by $\mc{A}^*$. When $b_{{j_1}...{j_N}{i_1}...{i_M}} = {a}_{{i_1}...{i_M}{j_1}...{j_N}}$, $\mc{B}$
 is the {\it transpose} of $\mc{A}$, denoted by $\mc{A}^T$. The definition of a diagonal tensor was introduced as a square tensor in \cite{BraliNT13} (Definition 3.12) and  as an even-order tensor in \cite{sun}.  Here we introduce arbitrary-order diagonal tensor, as follows: 
\begin{definition} 
A tensor $ \mc{D} \in \mathbb{C}^{I_1\times \cdots \times I_M\times J_1 \times \cdots \times J_N} $  with entries $ d_{{i_1}...{i_M}{j_1}...{j_N}}$
  is
   called a {\it diagonal
   tensor} if 
   $d_{{i_1}...{i_M}{j_1}...{j_N}} = 0 $, when\\ $ {\left[i_1+\sum_{k=2}^{M} ({i_k - 1}) \prod_{l=1}^{k-1} I_l] \neq [j_1+\sum_{k=2}^{N} (j_k-1) \prod_{l=1}^{k-1} J_l\right]}$.
\end{definition}
Now we  recall the definition of an identity  tensor below.

\begin{definition} (Definition 3.13, \cite{BraliNT13}) A tensor  $ \mc{I}_N \in \mathbb{C}^{J_1\times \cdots \times J_N\times J_1 \times \cdots \times J_N} $ with entries 
     $ (\mc{I}_N)_{i_1i_2 \cdots i_Nj_1j_2\cdots j_N} = \prod_{k=1}^{N} \delta_{i_k j_k}$,
   where
\begin{numcases}
{\delta_{i_kj_k}=}
  1, &  $i_k = j_k$,\nonumber
  \\
  0, & $i_k \neq j_k $.\nonumber
\end{numcases}
 is  called a {\it  unit tensor or identity tensor}.
\end{definition}
 Note that throughout the paper, we denote $\mc{I}_M $,$ \mc{I}_L $ and $ \mc{I}_R $ as identity tensors in the space $ \mathbb{C}^{I_1\times \cdots \times I_M\times I_1 \times \cdots \times I_M} $, $ \mathbb{C}^{K_1\times \cdots \times K_L\times K_1 \times \cdots \times K_L} $ and $\mathbb{C}^{H_1\times \cdots \times H_R\times H_1 \times \cdots \times H_R} $ respectively.
 Further, a tensor $\mc{O}$ denotes the {\it zero tensor} if  all the entries are zero. 
 A tensor $\mc{A}\in
\mathbb{C}^{I_1\times\cdots\times I_N \times I_1 \times\cdots\times
I_N}$ is {\it Hermitian}  if  $\mc{A}=\mc{A}^*$ and {\it skew-Hermitian} if $\mc{A}= - \mc{A}^*$. Subsequently, a tensor $\mc{A}\in \mathbb{C}^{I_1\times\cdots\times I_N \times I_1
\times\cdots\times I_N}$  is {\it unitary}  if  $\mc{A}\n
\mc{A}^*=\mc{A}^*\n \mc{A}=\mc{I}_N $, and {\it idempotent}  if $\mc{A}
\n \mc{A}= \mc{A}.$ In the case of tensors of real entries,
Hermitian, skew-Hermitian and unitary tensors are called {\it symmetric} (see Definition 3.16,
\cite{BraliNT13}), {\it skew-symmetric} and {\it
orthogonal} (see Definition 3.15, \cite{BraliNT13}) tensors,  respectively. 
Next we present the definition of the {\it reshape} operation, which was introduced earlier in \cite{psmv18}. This is a more general way of rearranging the entries in a tensor (it is also a standard Matlab function), as follows:

\begin{definition}{(Definition 3.1, \cite{psmv18})}: 
The 1-1 and onto reshape map, rsh,  is defined as: $rsh :  \mathbb{C}^{I_1 \times \cdots \times I_M \times J_1 \times \cdots \times J_N} \longrightarrow \mathbb{C}^{I_1\cdots I_M \times J_1\cdots J_N}$ with
\begin{equation}
    rsh(\mc{A}) = A = reshape(\mc{A},I_1\cdots I_M,J_1\cdots J_N) 
\end{equation}
      where $ \mc{A} \in \mathbb{C}^{I_1 \times \cdots \times I_M \times J_1 \times \cdots \times J_N} $  and the matrix $ A \in  \mathbb{C}^{I_1\cdots I_M \times J_1\cdots J_N}$. Further, the inverse reshaping is the mapping defined as: $rsh^{-1} :  \mathbb{C}^{I_1\cdots I_M \times J_1\cdots J_N} \longrightarrow  \mathbb{C}^{I_1 \times \cdots \times I_M \times J_1 \times \cdots \times J_N}$ with 
\begin{equation}
rsh^{-1}(A) = \mc{A} = reshape(A,I_1,\cdots,I_M,J_1,\cdots,J_N).
 \end{equation}
 where  the matrix $ A \in  \mathbb{C}^{I_1\cdots I_M \times J_1\cdots J_N}$ and the tensor $\mc{A} \in \mathbb{C}^{I_1 \times \cdots \times I_M \times J_1 \times \cdots \times J_N}$. 
\end{definition}
Further, Lemma 3.2 in \cite{psmv18} defined the rank of a tensor, $\mc{A} $, denoted by $ rshrank(\mc{A}) $ as 
\begin{equation}\label{21}
    rshrank(\mc{A}) = rank(rsh(\mc{A})).  
\end{equation}
Continuing this research, Stanimirovic et. al.  in \cite{psmv18} discussed the homomorphism properties of the rsh function,  as follows,

\begin{lemma}(Lemma 3.1 \cite{psmv18}): 
Let $ \mc{A}\in \mathbb{C}^{I_1\times\cdots\times I_M\times J_1 \times \cdots \times J_N } $ and $ \mc{B} \in \mathbb{C}^{J_1 \times \cdots \times J_N \times K_1 \times \cdots \times K_L} $ be given tensors. Then 
\begin{equation}\label{66}
rsh(\mc{A}\n \mc{B}) = rsh(\mc{A})rsh(\mc{B}) = AB \in \mathbb{C}^{I_1\cdots I_M \times K_1\cdots K_L }
\end{equation}
where $ A = rsh(\mc{A}) \in \mathbb{C}^{I_1\cdots I_M \times J_1 \cdots J_N}, B = rsh(\mc{B}) \in \mathbb{C}^{J_1\cdots J_N \times K_1\cdots K_L} $.
  
\end{lemma}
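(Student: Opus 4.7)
The goal is to show that the reshape operation turns Einstein products of tensors into ordinary matrix products. Since $rsh$ just rewrites the same scalars with a combined index, the proof is essentially the observation that the Einstein summation over the multi-index $(j_1,\dots,j_N)$ matches the matrix-product summation over the single index $j$ once the multi-index is bijectively encoded.

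The plan is to proceed by direct component-wise verification. First, I would fix the bijective encoding of the middle multi-index: define
\begin{equation*}
\varphi_N(j_1,\dots,j_N) \;=\; j_1 + \sum_{k=2}^{N}(j_k-1)\prod_{l=1}^{k-1} J_l,
\end{equation*}
and note (by the standard mixed-radix representation of integers) that $\varphi_N$ is a bijection from $\{1,\dots,J_1\}\times\cdots\times\{1,\dots,J_N\}$ onto $\{1,\dots,J_1J_2\cdots J_N\}$. Define $\varphi_M$ and $\varphi_L$ analogously for the outer index blocks. Then by the definition of $rsh$ one has
\begin{equation*}
(A)_{\varphi_M(i),\,\varphi_N(j)} \;=\; a_{i_1\dots i_M j_1\dots j_N},\qquad
(B)_{\varphi_N(j),\,\varphi_L(k)} \;=\; b_{j_1\dots j_N k_1\dots k_L},
\end{equation*}
where I am abbreviating $i=(i_1,\dots,i_M)$ etc.

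Next I would compute both sides at a generic entry. On the one hand, using the definition \eqref{Eins} of the Einstein product followed by the reshape on the outer indices,
\begin{equation*}
\bigl[rsh(\mc{A}\n\mc{B})\bigr]_{\varphi_M(i),\,\varphi_L(k)}
\;=\;(\mc{A}\n\mc{B})_{i_1\dots i_M k_1\dots k_L}
\;=\;\sum_{j_1,\dots,j_N} a_{i_1\dots i_M j_1\dots j_N}\,b_{j_1\dots j_N k_1\dots k_L}.
\end{equation*}
On the other hand, matrix multiplication of $A=rsh(\mc{A})$ and $B=rsh(\mc{B})$ gives
\begin{equation*}
(AB)_{\varphi_M(i),\,\varphi_L(k)} \;=\; \sum_{q=1}^{J_1\cdots J_N}(A)_{\varphi_M(i),\,q}\,(B)_{q,\,\varphi_L(k)}.
\end{equation*}

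Finally I would reconcile the two sums by reindexing through $\varphi_N$: since $\varphi_N$ is a bijection, substituting $q=\varphi_N(j_1,\dots,j_N)$ converts the sum on $q$ into the $N$-fold sum on $j_1,\dots,j_N$, and the entries line up by the displayed identities for $A$ and $B$. This equates the two expressions and completes the proof. The only nontrivial step is the bijectivity of $\varphi_N$ (equivalently, the well-definedness of the reshape), which is a standard mixed-radix identity; I would cite it rather than re-derive it. Everything else is notation management, so I do not anticipate a genuine obstacle beyond keeping the multi-index bookkeeping clean.
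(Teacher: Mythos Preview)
Your argument is correct: the bijectivity of the mixed-radix encoding $\varphi_N$ is exactly what makes the $N$-fold Einstein sum over $(j_1,\dots,j_N)$ coincide with the single matrix-product sum over $q\in\{1,\dots,J_1\cdots J_N\}$, and the rest is bookkeeping.

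Note, however, that the paper does not supply its own proof of this lemma; it simply quotes it as Lemma~3.1 from \cite{psmv18}. So there is no paper-proof to compare against here. Your direct componentwise verification is the standard argument and is precisely what one would expect the cited source to contain.
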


An immediate consequence of the above Lemma is the following:
\begin{equation}\label{77}
\mc{A}\n\mc{B} = rsh^{-1}(AB), ~~i.e.,~~  rsh^{-1}(AB) = rsh^{-1}(A)\n rsh^{-1}(B).
\end{equation}

Existence of SVD of any square tensor is discussed in \cite{BraliNT13}. Using this framework, Jun and Wei \cite{weit2} defined Hermitian positive definite tensors, as follows: 
\begin{definition}(Definition 1, \cite{weit2})
For $\mc{P} \in \mathbb{C}^{I_1\times\cdots\times I_N \times I_1 \times\cdots\times I_N} $, if there exists a unitary tensor $ \mc{U} \in \mathbb{C}^{I_1 \times \cdots \times I_N \times I_1 \times \cdots \times I_N}  $ such that
\begin{equation}\label{2.16}
\mc{P} =\mc{U} *_N \mc{D} *_N \mc{U}^*,
\end{equation} 
where $ \mc{D} \in \mathbb{C}^{I_1 \times \cdots \times I_N \times I_1 \times \cdots \times I_N} $ is a diagonal tensor with positive diagonal entries, then $\mc{P}$ is said to be a Hermitian positive definite tensor. 
\end{definition}
Further, Jun and Wei \cite{weit2} defined the square root of a Hermitian positive definite tensor, $\mc{P}$, as follows:   
\begin{equation*}
\mc{P}^{1/2} = \mc{U}*_N \mc{D}^{1/2} *_N \mc{U}^*
\end{equation*}
where $\mc{D}^{1/2}$ is the diagonal tensor, which obtained from $\mc{D}$ by taking the square root of all its diagonal entries. Notice that $\mc{P}^{1/2}$ is always non-singular and its inverse is denoted by $\mc{P}^{-1/2}$.
  
we now recall the definition of the range and the null space of arbitrary order tensors.

\begin{definition}( Definition 2.1,\cite{psmv18}):
  The null space and the range space of a tensor $ \mc{A} \in \mathbb{C}^{I_1\times \cdots\times I_M \times J_1 \times \cdots \times J_N} $ {are defined as follows:}
  \begin{eqnarray*}
\mc{N}(\mc{A}) = \{\mc{X} : \mc{A}*_N\mc{X} = \mc{O} \in  \mathbb{C}^{I_1\times \cdots\times I_M}\},~ and ~ \mathfrak{R}(\mc{A}) = \{ \mc{A}*_N\mc{X} : \mc{X} \in \mathbb{C}^{J_1 \times \cdots \times J_N } \}
  \end{eqnarray*}
\end{definition}
It is easily seen that $\mc{N}(A)$ is a subspace of $\mathbb{C}^{J_1\times \cdots\times J_N}$ and $\mathfrak{R}(\mc{A})$ is a subspace of $\mathbb{C}^{I_1 \times \cdots \times I_M}$. 
In particular, $\mc{N}(\mc{A}) = \{ \mc{O} \}$ {\it if and only if} $\mc{A}$ is left invertiable via $*_M$ operation and  $\mathfrak{R}(\mc{A}) = \mathbb{C}^{I_1 \times \cdots \times I_M} $ {\it if and only if} $\mc{A}$ is right invertiable via $*_N$ operation.

\section{Main results}
Mathematical modelling of problems in science and engineering typically involves solving multilinear systems; this becomes particularly challenging for problems having an arbitrary-order tensor. However, the existing framework on Moore-Penrose inverses of arbitrary-order tensor appears to  be insufficient and/or inappropriate.  It is thus of interest to study the theory of Moore-Penrose inverse of an arbitrary-order tensor via the Einstein product.

\subsection{Moore-Penrose inverses}
One of the most widely used methods is the SVD to compute Moore-Penrose inverse. Here we present a generalization of the SVD via the Einstein product.

\begin{lemma}\label{SVDlemma}
Let  $ \mc{A}\in \mathbb{C}^{I_1\times\cdots\times I_M\times J_1 \times \cdots \times J_N } $ with $ rshrank(\mc{A}) = r $. Then the SVD for tensor $ \mc{A}$ has the form 
\begin{equation}
     \mc{A} = \mc{U}\m \mc{D}\n\mc{V}^*
\end{equation}
where $ \mc{U} \in \mathbb{C}^{I_1 \times \cdots \times I_M \times I_1 \times \cdots \times I_M} $ and $ \mc{V} \in \mathbb{C}^{J_1 \times \cdots \times J_N \times J_1 \times \cdots \times J_N} $ are unitary tensors, and $ \mc{D} \in \mathbb{C}^{I_1 \times \cdots \times I_M \times J_1 \times \cdots \times J_N } $ is a diagonal tensor, 
defined by

\begin{equation*}
(\mc{D})_{i_1 \cdots i_M j_1 \cdots j_N} = \left\{ \,
\begin{IEEEeqnarraybox}[][c]{l?s} \IEEEstrut
\sigma_{I} >0, & if ${I} = {J} \in {\{1,2,...,r \}}$, \\
      0, & otherwise
      \IEEEstrut
    \end{IEEEeqnarraybox}
  \right.
\end{equation*}
 where $ I = [i_1+\sum_{k=2}^{M} ({i_k - 1}) \prod_{l=1}^{k-1} I_l] $ and $ {J} = [j_1+\sum_{k=2}^{N} (j_k-1) \prod_{l=1}^{k-1} J_l] $.

\end{lemma}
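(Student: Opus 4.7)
The plan is to lift the classical matrix SVD to the tensor setting through the reshape isomorphism, using the homomorphism property in equations (1.6) and (1.7).

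First, I would set $A = rsh(\mathcal{A}) \in \mathbb{C}^{I_1\cdots I_M \times J_1\cdots J_N}$. By definition (2.4), $\mathrm{rank}(A) = rshrank(\mathcal{A}) = r$, so the standard matrix SVD yields $A = UDV^*$ with $U \in \mathbb{C}^{I_1\cdots I_M \times I_1\cdots I_M}$ and $V \in \mathbb{C}^{J_1\cdots J_N \times J_1\cdots J_N}$ unitary matrices, and $D \in \mathbb{C}^{I_1\cdots I_M \times J_1\cdots J_N}$ a diagonal matrix whose $(I,I)$ entry is $\sigma_I > 0$ for $1 \leq I \leq r$ and zero otherwise. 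I would then define
\begin{equation*}
\mathcal{U} = rsh^{-1}(U), \qquad \mathcal{V} = rsh^{-1}(V), \qquad \mathcal{D} = rsh^{-1}(D).
\end{equation*}

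Next, I would verify the factorization by applying equation (1.7) twice. The key auxiliary identity is that the reshape map commutes with conjugate transposition in the sense $rsh(\mathcal{V}^*) = (rsh(\mathcal{V}))^* = V^*$. This follows directly from the lexicographic indexing formula in (1.3): the tensor entry at $(j_1,\ldots,j_N,i_1,\ldots,i_M)$ in $\mathcal{V}^*$ equals $\overline{\mathcal{V}_{i_1\cdots i_M j_1\cdots j_N}}$, and the indexing pairs these exactly with the $(J,I)$ and $(I,J)$ positions of the reshaped matrices. Granting this, equation (1.7) gives
\begin{equation*}
\mathcal{U} *_M \mathcal{D} *_N \mathcal{V}^* = rsh^{-1}(U) *_M rsh^{-1}(D) *_N rsh^{-1}(V^*) = rsh^{-1}(UDV^*) = rsh^{-1}(A) = \mathcal{A}.
\end{equation*}

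Then I would verify that $\mathcal{U}$ and $\mathcal{V}$ are unitary. Since $UU^* = U^*U = I_{I_1\cdots I_M}$, applying $rsh^{-1}$ and using (1.7) together with the fact that $rsh^{-1}$ of the identity matrix of size $I_1\cdots I_M$ is exactly the identity tensor $\mathcal{I}_M$ (immediate from Definition 1.3 and the indexing formula), we obtain $\mathcal{U} *_M \mathcal{U}^* = \mathcal{U}^* *_M \mathcal{U} = \mathcal{I}_M$, and similarly for $\mathcal{V}$. Finally, the diagonal structure of $\mathcal{D}$ required by Definition 1.2 is a direct translation of the matrix diagonal structure of $D$ under the indexing formula (1.3), since $\mathcal{D}_{i_1\cdots i_M j_1\cdots j_N} = D_{IJ}$ with the same $I,J$ used in the statement.

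The main obstacle, though routine, is the bookkeeping for the conjugate-transpose/reshape compatibility: one must check that the column-major flattening used in (1.3) produces matching row and column indices on both sides of $rsh(\mathcal{V}^*) = (rsh(\mathcal{V}))^*$. Once this compatibility is established, every other step reduces to transporting a standard matrix statement through the ring homomorphism $rsh$.
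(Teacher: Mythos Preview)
Your proposal is correct and follows essentially the same route as the paper: reshape $\mathcal{A}$ to a matrix, apply the classical SVD, and pull back all three factors via $rsh^{-1}$ using the homomorphism property. Your explicit verification of the compatibility $rsh(\mathcal{V}^*) = (rsh(\mathcal{V}))^*$ and of the diagonal structure of $\mathcal{D}$ is more careful than the paper's treatment, which simply asserts these points.
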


\begin{proof}
Let $ A = rsh(\mc{A}) \in \mathbb{C}^{I_1\cdots I_M \times J_1 \cdots J_N}  $. In the context of the SVD of the matrix $A$, one can write 
 $ A = U D V^* $, where $ U \in \mathbb{C}^{I_1\cdots I_M \times I_1 \cdots I_M}$ and $ V \in \mathbb{C}^{J_1 \cdots J_N \times J_1 \cdots J_N} $ are unitary matrices and $ D \in \mathbb{C}^{I_1\cdots I_M \times J_1 \cdots J_N} $ is a  diagonal matrix with 
\begin{equation*}
(D)_{I,J}  = \left\{ \,
\begin{IEEEeqnarraybox}[][c]{l?s} \IEEEstrut
\sigma_{I} >0, & if ${I} = {J} \in {\{1,2,...,r \}}$, \\
      0, & otherwise
      \IEEEstrut
    \end{IEEEeqnarraybox}
  \right.
\end{equation*}
From relations \eqref{66} and \eqref{77}, we can write  
 \begin{eqnarray}\label{remarkSVD}
 \mc{A} &=& rsh^{-1}(A) = rsh^{-1}(U D V^*) \\\nonumber
 &=& rsh^{-1}(U)\m rsh^{-1}(D)\n rsh^{-1}(V^*) 
 = \mc{U}\m \mc{D}\n\mc{V}^* 
 \end{eqnarray}
where $\mc{U} = rsh^{-1}(U), \mc{V} = rsh^{-1}(V)$ and $\mc{D} = rsh^{-1}(D) $.
 Further, $\mc{U}\m\mc{U}^* = rsh^{-1}(U U^*) = rsh^{-1}(I) = \mc{I}_M $ and $ \mc{V}\n\mc{V}^* = rsh^{-1}(V V^*) = rsh^{-1}(I) = \mc{I}_N $ gives $\mc{A} = \mc{U}\m \mc{D}\n\mc{V}^* $, where $ \mc{U}$ and $\mc{V} $ are unitary tensors and $ \mc{D} $ diagonal tensor.
\end{proof}

\begin{remark}\label{remark11}
The authors of in \cite{LIANG2018} proved the Theorem 3.2 for a square tensor. Here we proved for an arbitrary-order tensor. Specifically, the decomposition is followed from the homomorphism property of {\it reshape} map.
\end{remark}

Continuing this study, we recall the definition of the Moore-Penrose inverse of tensors in $\mathbb{C}^{I_1\times\cdots\times I_M \times J_1 \times ... \times J_N}$ via the Einstein product, which was introduced in \cite{LIANG2018} for arbitrary-order. 

\begin{definition}\label{defmpi}
Let $\mc{A} \in \mathbb{C}^{I_1\times\cdots\times I_M \times J_1
\times ... \times J_N}$. The tensor $\mc{X} \in
\mathbb{C}^{J_1\times\cdots\times J_N \times I_1 \times\cdots\times
I_M} $ satisfying the following four tensor equations:
\begin{eqnarray*}
&&(1)~\mc{A}\n\mc{X}\m\mc{A} = \mc{A};\\
&&(2)~\mc{X}\m\mc{A}\n\mc{X} = \mc{X};\\
&&(3)~(\mc{A}\n\mc{X})^* = \mc{A}\n\mc{X};\\
&&(4)~(\mc{X}\m\mc{A})^* = \mc{X}\m\mc{A},
\end{eqnarray*}
is called  the \textbf{Moore-Penrose inverse} of $\mc{A}$, and is
denoted by $\mc{A}^{\dg}$.
\end{definition}

Similarly, to the proof of Theorem 3.2 in \cite{sun}, we have the existence and uniqueness of the Moore-Penrose inverse of an arbitrary-order tensor in $\mathbb{C}^{I_1\times\cdots\times I_M \times J_1 \times ... \times J_N}$, as follows. 


\begin{theorem}\label{MPIunique}
The Moore-Penrose inverse of an arbitrary-order tensor, $ \mc{A} \in \mathbb{C}^{I_1\times \cdots \times I_M \times J_1 \times \cdots \times J_N} $ exists and is unique.
\end{theorem}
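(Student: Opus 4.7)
The cleanest approach is to leverage the reshape isomorphism, pushing the problem down to the matrix level, where existence and uniqueness of the Moore-Penrose inverse are classical. Write $A = rsh(\mc{A}) \in \mathbb{C}^{I_1\cdots I_M \times J_1\cdots J_N}$, let $A^{\dg} \in \mathbb{C}^{J_1\cdots J_N \times I_1\cdots I_M}$ be its (matrix) Moore-Penrose inverse, and define
\begin{equation*}
\mc{X} := rsh^{-1}(A^{\dg}) \in \mathbb{C}^{J_1\times\cdots\times J_N \times I_1 \times\cdots\times I_M}.
\end{equation*}

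For existence, the plan is to verify the four Penrose equations for $\mc{X}$ by translating each through the homomorphism identity \eqref{77}. For instance, $\mc{A}\n\mc{X}\m\mc{A} = rsh^{-1}(A A^{\dg} A) = rsh^{-1}(A) = \mc{A}$, and the analogous computation handles the second equation. A small but crucial subsidiary check is that the reshape operation intertwines conjugate transpose: reading off indices from the definition of $rsh$, one sees that for any $\mc{B}\in\mathbb{C}^{I_1\times\cdots\times I_M\times J_1\times\cdots\times J_N}$, $rsh(\mc{B}^*) = (rsh(\mc{B}))^*$. Combined with \eqref{77}, this immediately gives $(\mc{A}\n\mc{X})^* = rsh^{-1}((AA^{\dg})^*) = rsh^{-1}(AA^{\dg}) = \mc{A}\n\mc{X}$, and likewise for $(\mc{X}\m\mc{A})^*$.

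For uniqueness, there are two equivalent routes. The reshape route: if $\mc{X}, \mc{Y}$ both satisfy the four equations, then $X := rsh(\mc{X})$ and $Y := rsh(\mc{Y})$ both satisfy the four matrix Penrose equations for $A$ (using the same homomorphism and conjugate-transpose compatibility), hence $X = Y$ by uniqueness of the matrix Moore-Penrose inverse, and applying $rsh^{-1}$ gives $\mc{X} = \mc{Y}$. Alternatively, one can run the standard Penrose chain directly at the tensor level:
\begin{equation*}
\mc{X} = \mc{X}\m\mc{A}\n\mc{X} = \mc{X}\m(\mc{A}\n\mc{Y}\m\mc{A})\n\mc{X} = (\mc{X}\m\mc{A})^*\n(\mc{Y}\m\mc{A})^*\n\mc{X} = \cdots = \mc{Y},
\end{equation*}
using associativity of the Einstein product and the four equations at each step.

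The only genuinely non-routine step is checking that $rsh$ commutes with the conjugate transpose in the sense described, since the index permutation must be tracked through the linearization formula. Once that compatibility is settled, the rest is a mechanical transfer through \eqref{66} and \eqref{77}, and the existence and uniqueness at the matrix level do the real work. No new ideas beyond those already in Lemma~\ref{SVDlemma} are needed; in fact, an SVD-based construction $\mc{A}^{\dg} = \mc{V}\n\mc{D}^{\dg}\m\mc{U}^*$ (with $\mc{D}^{\dg}$ the diagonal tensor of reciprocal nonzero singular values) gives the same tensor and is an attractive alternative if one prefers to avoid invoking the matrix Moore-Penrose inverse as a black box.
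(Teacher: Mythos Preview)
Your proposal is correct. The paper does not actually supply its own proof of this theorem; it simply states that the result follows ``similar to the proof of Theorem 3.2 in \cite{sun}'', which treats the even-order case via SVD. Your reshape-isomorphism argument is entirely in the spirit of the paper's framework (it is exactly how Lemma~\ref{SVDlemma} is proved), and the SVD-based alternative you mention at the end is presumably what \cite{sun} does; both are valid, and your transfer-through-$rsh$ route is arguably the more economical one here since it avoids building $\mc{D}^{\dg}$ explicitly. The one point you correctly flag as needing care---that $rsh$ intertwines conjugate transpose---is indeed the only thing not already on the page, and it follows immediately from the index formula in the definition of $rsh$.
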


By straightforward derivation, the following results can be obtained,  which also hold (Lemma 2.3 and Lemma 2.6) in \cite{bm} for even-order tensor.

\begin{lemma}\label{revA}
Let $\mc{A}\in \mathbb{C}^{I_1\times\cdots\times I_M \times J_1
\times\cdots\times J_N }$. Then
\begin{enumerate}
\item[(a)] $\mc{A}^* = \mc{A}^{\dg} \m \mc{A} \n \mc{A}^*=\mc{A}^* \m \mc{A} \n \mc{A}^{\dg};$
\item[(b)] $\mc{A} = \mc{A} \n \mc{A}^* \m (\mc{A}^*)^{\dg} = (\mc{A}^*)^{\dg} \n \mc{A}^* \m \mc{A};$
\item[(c)] $\mc{A}^{\dg} = ({\mc{A}^*}\m{\mc{A})^{\dg}}\n\mc{A}^* = {\mc{A}^*}\m(\mc{A}\n\mc{A}^*)^{\dg}.$
\end{enumerate}
 \end{lemma}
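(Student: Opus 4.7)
Before writing anything, I would record the two immediate consequences of axioms (3) and (4) in Definition \ref{defmpi}: the tensors $\mc{A}\n\mc{A}^{\dg}$ and $\mc{A}^{\dg}\m\mc{A}$ are Hermitian, or equivalently
\[
(\mc{A}^{\dg})^*\n\mc{A}^* = \mc{A}\n\mc{A}^{\dg}, \qquad \mc{A}^*\m(\mc{A}^{\dg})^* = \mc{A}^{\dg}\m\mc{A}.
\]
These two identities, together with axiom (1) $\mc{A}\n\mc{A}^{\dg}\m\mc{A}=\mc{A}$, will do essentially all the work. The only bookkeeping rule I need is that the conjugate transpose reverses the order of factors but keeps the contraction subscript the same ($\m$ stays $\m$, $\n$ stays $\n$), which is immediate from the definition.

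For part (a), I would apply the conjugate transpose to axiom (1), obtaining $\mc{A}^* = \mc{A}^*\m(\mc{A}^{\dg})^*\n\mc{A}^*$. Grouping the last two factors and using the Hermiticity of $\mc{A}\n\mc{A}^{\dg}$ collapses the expression to $\mc{A}^*\m\mc{A}\n\mc{A}^{\dg}$; grouping the first two factors and using the Hermiticity of $\mc{A}^{\dg}\m\mc{A}$ collapses it instead to $\mc{A}^{\dg}\m\mc{A}\n\mc{A}^*$. Both equalities of (a) follow at once.

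For part (b), I would first establish the auxiliary fact $(\mc{A}^{\dg})^*=(\mc{A}^*)^{\dg}$ by checking that $(\mc{A}^{\dg})^*$ satisfies the four defining axioms for the Moore-Penrose inverse of $\mc{A}^*$ and then invoking the uniqueness statement in Theorem \ref{MPIunique}. Given this, (b) is just (a) read in reverse: axiom (4) gives
\[
\mc{A}\n\mc{A}^*\m(\mc{A}^{\dg})^* \;=\; \mc{A}\n\bigl(\mc{A}^{\dg}\m\mc{A}\bigr)\;=\;\mc{A},
\]
and axiom (3) gives $(\mc{A}^{\dg})^*\n\mc{A}^*\m\mc{A} = (\mc{A}\n\mc{A}^{\dg})\m\mc{A} = \mc{A}$, after which we replace $(\mc{A}^{\dg})^*$ by $(\mc{A}^*)^{\dg}$.

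Part (c) is the only substantive step. The cleanest plan is to set $\mc{X}=(\mc{A}^*\m\mc{A})^{\dg}\n\mc{A}^*$ and verify the four defining equations in Definition \ref{defmpi} directly, using the identity $\mc{A}^*=\mc{A}^{\dg}\m\mc{A}\n\mc{A}^*$ from (a) together with the four Moore-Penrose axioms applied to $\mc{A}^*\m\mc{A}$; uniqueness then forces $\mc{X}=\mc{A}^{\dg}$, and the second equality of (c) follows symmetrically by substituting $\mc{A}\leftrightarrow \mc{A}^*$. An equally valid alternative is to push everything through the isomorphism $rsh$ of Lemma 2.5: because $rsh$ preserves the Einstein product and the conjugate transpose, uniqueness of the Moore-Penrose inverse forces $rsh(\mc{A}^{\dg})=(rsh(\mc{A}))^{\dg}$, so the tensor identities in (c) are the pullback of the classical matrix identities $A^{\dg}=(A^*A)^{\dg}A^* = A^*(AA^*)^{\dg}$. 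I expect the main obstacle to be purely notational: keeping the dimensions and the two contraction subscripts $\m$ and $\n$ straight across products of three or four factors, since a single mismatched $\m/\n$ would silently wreck an otherwise correct manipulation.
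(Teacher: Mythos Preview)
Your proposal is correct; the paper itself omits the proof entirely, saying only that the results follow ``by straightforward derivation'' and that the even-order case appears as Lemmas 2.3 and 2.6 in \cite{bm}. Your argument---conjugate-transposing axiom (1) and invoking the Hermiticity of $\mc{A}\n\mc{A}^{\dg}$ and $\mc{A}^{\dg}\m\mc{A}$ for (a), establishing $(\mc{A}^{\dg})^*=(\mc{A}^*)^{\dg}$ for (b), and then either verifying the four axioms for $(\mc{A}^*\m\mc{A})^{\dg}\n\mc{A}^*$ directly or pushing through the $rsh$ isomorphism for (c)---is exactly the kind of straightforward derivation the authors have in mind. One small remark on your direct verification of (c): showing $\mc{A}\n(\mc{A}^*\m\mc{A})^{\dg}\n(\mc{A}^*\m\mc{A})=\mc{A}$ is not a one-line substitution from (a) alone; the cleanest route is to note that $\mc{B}=\mc{A}^*\m\mc{A}$ satisfies $\mc{B}\n(\mc{I}_N-\mc{B}^{\dg}\n\mc{B})=\mc{O}$, whence $\mc{C}=\mc{A}\n(\mc{I}_N-\mc{B}^{\dg}\n\mc{B})$ has $\mc{C}^*\m\mc{C}=\mc{O}$ and therefore $\mc{C}=\mc{O}$ (this last implication is immediate via $rsh$). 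Your alternative of transporting the classical matrix identity $A^{\dg}=(A^*A)^{\dg}A^*$ back through $rsh^{-1}$ avoids even that, and is arguably the tidier choice here.
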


From \cite{psmv18}, we present the relation of range space of multidimensional arrays 
which will be used to prove next Lemma.

\begin{lemma}\label{lemma25}(Lemma2.2, \cite{psmv18})
Let $\mc{A}\in \mathbb{C}^{I_1\times\cdots\times I_M \times J_1
\times\cdots\times J_N }$, $\mc{B}\in \mathbb{C}^{I_1\times\cdots\times I_M \times K_1
\times\cdots\times K_L }$. Then $ \mathfrak{R}(\mc{B}) \subseteq \mathfrak{R}(\mc{A}) $  if and only if there exists  $ \mc{U}\in \mathbb{C}^{J_1 \times \cdots\times J_N \times K_1 \times \cdots \times K_L } $ such that $ \mc{B} = \mc{A}\n \mc{U} $.

\end{lemma}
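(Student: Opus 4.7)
The plan is to reduce the tensor statement to the corresponding classical matrix statement via the reshape homomorphism, and then translate back. The heart of the argument is that \eqref{66} and \eqref{77} let us pass freely between $\mc{A}\n(\cdot)$ and the matrix product $A\cdot(\cdot)$, so range inclusion for tensors becomes equivalent to range inclusion for matrices, where the result is standard.

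For the ``if'' direction, suppose $\mc{B} = \mc{A}\n\mc{U}$ for some $\mc{U}\in \mathbb{C}^{J_1\times\cdots\times J_N \times K_1\times\cdots\times K_L}$. For any $\mc{Y}\in \mathbb{C}^{K_1\times\cdots\times K_L}$, associativity of the Einstein product gives $\mc{B}\1\mc{Y} = (\mc{A}\n\mc{U})\1\mc{Y} = \mc{A}\n(\mc{U}\1\mc{Y})$, and since $\mc{U}\1\mc{Y}\in \mathbb{C}^{J_1\times\cdots\times J_N}$, this element belongs to $\mathfrak{R}(\mc{A})$. Hence $\mathfrak{R}(\mc{B})\subseteq \mathfrak{R}(\mc{A})$.

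For the ``only if'' direction, I would set $A = rsh(\mc{A})$ and $B = rsh(\mc{B})$. Restricting $rsh$ to the ambient space $\mathbb{C}^{I_1\times\cdots\times I_M}$ yields a linear bijection onto $\mathbb{C}^{I_1\cdots I_M}$, and \eqref{66} specializes to $rsh(\mc{A}\n\mc{X}) = A\,rsh(\mc{X})$ for every $\mc{X}\in \mathbb{C}^{J_1\times\cdots\times J_N}$. Consequently, $rsh$ carries $\mathfrak{R}(\mc{A})$ bijectively onto the column space $\mathfrak{R}(A)$, and similarly for $\mc{B}$, so the hypothesis $\mathfrak{R}(\mc{B})\subseteq \mathfrak{R}(\mc{A})$ is equivalent to the matrix inclusion $\mathfrak{R}(B)\subseteq \mathfrak{R}(A)$. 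The classical matrix fact then supplies a matrix $U\in \mathbb{C}^{J_1\cdots J_N \times K_1\cdots K_L}$ with $B = AU$: indeed, each column of $B$ lies in the column space of $A$, so it equals $A$ times some vector, and stacking these vectors yields $U$. Setting $\mc{U} = rsh^{-1}(U)$ and applying \eqref{77} finally gives $\mc{B} = rsh^{-1}(B) = rsh^{-1}(AU) = \mc{A}\n\mc{U}$, as required.

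I do not expect a substantive obstacle: reshape is an algebra homomorphism for the Einstein product, which does all the heavy lifting, and the classical matrix result is a one-line observation. The only bookkeeping that warrants attention is verifying that $rsh$ identifies $\mathfrak{R}(\mc{A})$ with the column space of $A$; this is immediate once one notes that the restriction of $rsh$ to the single-block space $\mathbb{C}^{I_1\times\cdots\times I_M}$ is itself a linear bijection that commutes with the operation of multiplying by $\mc{A}$ on the one side and by $A$ on the other.
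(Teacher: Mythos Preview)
Your argument is correct. Note, however, that the paper does not give its own proof of this lemma: it is quoted verbatim from \cite{psmv18} and stated without proof here, so there is no ``paper's proof'' to compare against. Your reduction via the reshape homomorphism is exactly the mechanism one would expect in this framework, and both directions are handled cleanly; the only remark is that the ``if'' direction is immediate from the definition of $\mathfrak{R}(\mc{A})$ and does not even require the reshape map.
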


We now discuss the important relation between range and Moore-Penrose inverse of an arbitrary order tensor, which are mostly used in various section of this paper.

\begin{lemma}{}\label{ID} Let $\mc{A}\in \mathbb{C}^{I_1\times\cdots\times I_M \times J_1
\times\cdots\times J_N }$ and $\mc{B}\in \mathbb{C}^{I_1\times\cdots\times I_M \times K_1
\times\cdots\times K_L }$. Then\\
(a) $ \mathfrak{R}(\mc{B}) \subseteq \mathfrak{R}(\mc{A}) \Leftrightarrow \mc{A}*_N\mc{A}^\dag *_M\mc{B} =\mc{B} $,\\
(b) $ \mathfrak{R}(\mc{A}) = \mathfrak{R}(\mc{B}) \Leftrightarrow \mc{A}*_N\mc{A}^\dag =\mc{B}*_L\mc{B}^\dag $,\\
(c) $ \mathfrak{R}(\mc{A}) = \mathfrak{R}[(\mc{A}^\dag)^*] $ and $ \mathfrak{R}(\mc{A}^*) = \mathfrak{R}(\mc{A}^\dag) $.
\end{lemma}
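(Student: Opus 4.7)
The plan is to treat the three parts in order, leaning on Lemma~\ref{lemma25} (the equivalence $\mathfrak{R}(\mc{B})\subseteq\mathfrak{R}(\mc{A})\Leftrightarrow \mc{B}=\mc{A}\n\mc{U}$), Lemma~\ref{revA} (the three product identities linking $\mc{A}$, $\mc{A}^\dag$, and $\mc{A}^*$), and the four defining equations of Definition~\ref{defmpi}. Part~(a) is almost immediate: for the forward direction, pick $\mc{U}$ with $\mc{B}=\mc{A}\n\mc{U}$ from Lemma~\ref{lemma25} and apply equation~(1) to get $\mc{A}\n\mc{A}^\dag\m\mc{B} = \mc{A}\n\mc{U} = \mc{B}$; conversely, if $\mc{A}\n\mc{A}^\dag\m\mc{B}=\mc{B}$ then $\mc{B} = \mc{A}\n(\mc{A}^\dag\m\mc{B})$ exhibits $\mc{B}$ in exactly the factored form Lemma~\ref{lemma25} needs in the opposite direction.

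For part~(b), I would apply (a) to both inclusions implicit in $\mathfrak{R}(\mc{A})=\mathfrak{R}(\mc{B})$, obtaining $\mc{A}\n\mc{A}^\dag\m\mc{B}=\mc{B}$ and $\mc{B}\1\mc{B}^\dag\m\mc{A}=\mc{A}$. Post-multiplying these by $\mc{B}^\dag$ and $\mc{A}^\dag$ respectively gives
\begin{equation*}
\mc{A}\n\mc{A}^\dag\m\mc{B}\1\mc{B}^\dag = \mc{B}\1\mc{B}^\dag, \qquad \mc{B}\1\mc{B}^\dag\m\mc{A}\n\mc{A}^\dag = \mc{A}\n\mc{A}^\dag.
\end{equation*}
Both $\mc{A}\n\mc{A}^\dag$ and $\mc{B}\1\mc{B}^\dag$ are Hermitian by equation~(3), so conjugate-transposing the first identity (using the product-reversal rule $(\mc{P}\mc{Q})^*=\mc{Q}^*\mc{P}^*$) turns it into $\mc{B}\1\mc{B}^\dag\m\mc{A}\n\mc{A}^\dag = \mc{B}\1\mc{B}^\dag$, and comparing with the second forces $\mc{A}\n\mc{A}^\dag = \mc{B}\1\mc{B}^\dag$. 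The reverse implication is routine: right-multiplying the common projector by $\mc{B}$ (resp.\ $\mc{A}$) reduces to (a).

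For (c), I would first record the auxiliary identity $(\mc{A}^\dag)^* = (\mc{A}^*)^\dag$, obtained by conjugate-transposing the four equations of Definition~\ref{defmpi} for $\mc{A}$ and invoking the uniqueness part of Theorem~\ref{MPIunique}. The first claim $\mathfrak{R}(\mc{A})=\mathfrak{R}[(\mc{A}^\dag)^*]$ then splits into two inclusions: Lemma~\ref{revA}(b) factors $\mc{A} = (\mc{A}^*)^\dag\n\mc{A}^*\m\mc{A}$, so $\mathfrak{R}(\mc{A})\subseteq\mathfrak{R}[(\mc{A}^*)^\dag]$ via Lemma~\ref{lemma25}, while the reverse inclusion follows from
\begin{equation*}
\mc{A}\n\bigl(\mc{A}^\dag\m(\mc{A}^*)^\dag\bigr) = (\mc{A}^*)^\dag\n\mc{A}^*\m(\mc{A}^*)^\dag = (\mc{A}^*)^\dag,
\end{equation*}
where the first equality rests on Hermiticity of $\mc{A}\n\mc{A}^\dag$ together with $(\mc{A}^\dag)^*=(\mc{A}^*)^\dag$, and the second is equation~(2) applied to $\mc{A}^*$. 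The companion identity $\mathfrak{R}(\mc{A}^*)=\mathfrak{R}(\mc{A}^\dag)$ is symmetric: Lemma~\ref{revA}(c) gives $\mc{A}^\dag = \mc{A}^*\m(\mc{A}\n\mc{A}^*)^\dag$, hence $\mathfrak{R}(\mc{A}^\dag)\subseteq\mathfrak{R}(\mc{A}^*)$, while Lemma~\ref{revA}(a) gives $\mc{A}^* = \mc{A}^\dag\m\mc{A}\n\mc{A}^*$, yielding the reverse.

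The main obstacle I anticipate is the index bookkeeping in (c): every passage between $\mc{A}$, $\mc{A}^*$, $\mc{A}^\dag$, and $(\mc{A}^*)^\dag$ swaps the roles of the $\m$ and $\n$ contractions, so confirming that expressions such as $\mc{A}\n\mc{A}^\dag\m(\mc{A}^*)^\dag$ are well-formed and associate as written requires careful tracking of the $I_k$ versus $J_k$ index blocks. Once these shape checks are in place, each displayed identity reduces to a one-line invocation of a Moore--Penrose axiom or a Lemma~\ref{revA} product formula.
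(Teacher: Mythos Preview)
Your proposal is correct and follows essentially the same route as the paper: part~(a) via Lemma~\ref{lemma25} and the first Moore--Penrose equation, part~(b) by applying (a) to both inclusions and manipulating the resulting projector identities, and part~(c) via the factorizations of Lemma~\ref{revA} together with $(\mc{A}^\dag)^*=(\mc{A}^*)^\dag$. The only cosmetic difference is in (b), where the paper derives $\mc{B}^\dag=\mc{B}^\dag\m\mc{A}\n\mc{A}^\dag$ as an intermediate step while you transpose one projector identity directly using Hermiticity; and in (c), where the paper obtains the second equality by substituting $\mc{A}^*$ for $\mc{A}$ in the first rather than invoking Lemma~\ref{revA}(a),(c) separately.
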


\begin{proof}
(a) Using the fact that $\mathfrak{R}(\mc{A}*_N \mc{U}) \subseteq  \mathfrak{R}(\mc{A})$ for two tensors $\mc{A}$ and $\mc{U}$ in appropriate order, one can conclude $\mathfrak{R}(\mc{B}) \subseteq \mathfrak{R}(\mc{A}) $ from $ \mc{A}*_N\mc{A}^\dag *_M\mc{B} = \mc{B}$. Applying Lemma \ref{lemma25}, we conclude $\mc{B} = \mc{A}*_N\mc{P}$ from $\mathfrak{R}(\mc{B}) \subseteq \mathfrak{R}(\mc{A})$,  where $\mc{P} \in \mathbb{C}^{J_1\times \cdots \times J_N \times K_1 \times \cdots \times K_L  }$. 
Hence, $\mc{A}*_N\mc{A}^\dag *_M \mc{B} = \mc{A}*_N\mc{A}^\dag *_M\mc{A}*_N\mc{P} =\mc{B}.$

(b) From (a), we have $\mathfrak{R}(\mc{A}) = \mathfrak{R}(\mc{B})$ if and only if $\mc{A}*_N\mc{A}^\dag*_M \mc{B} = \mc{B}$ and $\mc{B}*_L\mc{B}^\dag *_M\mc{A} = \mc{A} 
$ which implies $ \mc{B}^\dg = \mc{B}^\dag \m  \mc{A}\n\mc{A}^\dg $. Then 
$\mc{A}*_N\mc{A}^\dag = \mc{B}*_L\mc{B}^\dag *_M\mc{A}*_N\mc{A}^\dag 
= \mc{B}*_L\mc{B}^\dag$.

(c) Using the Lemma \ref{revA} [(b),(c)], one can conclude that $\mathfrak{R}(\mc{A}) \subseteq \mathfrak{R}[(\mc{A}^\dag)^*] $ and $\mathfrak{R}[(\mc{A}^\dag)^*] \subseteq \mathfrak{R}(\mc{A})$   respectively. This follows $\mathfrak{R}(\mc{A}) = \mathfrak{R}[(\mc{A}^\dag)^*]$. Further, replacing $ \mc{A}$ by $\mc{A}^*$ and using the fact $( \mc{A}^*)^\dag = (\mc{A}^\dag)^*$ we obtain $ \mathfrak{R}(\mc{A}^*) = \mathfrak{R}(\mc{A}^\dag)$.
\end{proof}

Using the fact that $\mathfrak{R}(\mc{A}*_N \mc{B}) \subseteq  \mathfrak{R}(\mc{A})$ for two tensor $\mc{A}$ and $\mc{B}$ and the Definition-\ref{defmpi}, we get,
\begin{eqnarray}\label{lemma37}
\mathfrak{R}(\mc{A}*_N\mc{B}*_L\mc{B}^\dag ) = \mathfrak{R}(\mc{A}*_N\mc{B}),
\end{eqnarray}
where $\mc{A}\in \mathbb{C}^{I_1\times\cdots\times I_M \times J_1
\times\cdots\times J_N }$ and $\mc{B}\in \mathbb{C}^{J_1\times\cdots\times J_N \times K_1
\times\cdots\times K_L }$. Now using the method as in the proof of Lemma \ref{ID}, one can prove the next Lemma.

\begin{lemma}{}\label{IDR1} Let $\mc{A}\in \mathbb{C}^{I_1\times\cdots\times I_M \times J_1
\times\cdots\times J_N }$ and $\mc{B}\in \mathbb{C}^{K_1\times\cdots\times K_L \times J_1
\times\cdots\times J_N }$. Then\\
(a) $\mathfrak{R}(\mc{B}^*) \subseteq \mathfrak{R}(\mc{A}^*) \Leftrightarrow \mc{B}*_N\mc{A}^\dag *_M\mc{A} =\mc{B}$,\\
(b) $\mathfrak{R}(\mc{A}^*) =\mathfrak{R}(\mc{B}^*) \Leftrightarrow \mc{A}^\dag *_M\mc{A} = \mc{B}^\dag *_L\mc{B},$\\
(c) 
$\mathfrak{R}(\mc{A}*_N\mc{B}^\dag) =\mathfrak{R}(\mc{A}*_N\mc{B}^*)$.
\end{lemma}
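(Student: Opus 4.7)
The overall plan is to reduce parts~(a) and~(b) to the corresponding parts of Lemma~\ref{ID} by passing to conjugate transposes, and to derive~(c) from Lemma~\ref{ID}(c) together with Lemma~\ref{lemma25}. The key algebraic ingredients are the identities $(\mc{A}^*)^\dag = (\mc{A}^\dag)^*$ (already exploited in the proof of Lemma~\ref{ID}(c)) and the anti-multiplicative behaviour of the conjugate transpose, $(\mc{X}*_p\mc{Y})^* = \mc{Y}^* *_p \mc{X}^*$. The dimensional setup is well-configured: $\mc{A}^* \in \mathbb{C}^{J_1\times\cdots\times J_N \times I_1 \times\cdots\times I_M}$ and $\mc{B}^* \in \mathbb{C}^{J_1\times\cdots\times J_N \times K_1 \times\cdots\times K_L}$ share the common first index block $J_1 \times\cdots\times J_N$, which is precisely the pattern required by Lemma~\ref{ID}.

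For~(a), I would apply Lemma~\ref{ID}(a) to the ordered pair $(\mc{A}^*, \mc{B}^*)$ to obtain the equivalence $\mathfrak{R}(\mc{B}^*) \subseteq \mathfrak{R}(\mc{A}^*) \Leftrightarrow \mc{A}^* *_M (\mc{A}^*)^\dag *_N \mc{B}^* = \mc{B}^*$, then rewrite using $(\mc{A}^*)^\dag = (\mc{A}^\dag)^*$ and take the conjugate transpose of both sides to land on $\mc{B}*_N \mc{A}^\dag *_M \mc{A} = \mc{B}$. For~(b), the same substitution turns Lemma~\ref{ID}(b) applied to $(\mc{A}^*, \mc{B}^*)$ into the equivalence $\mathfrak{R}(\mc{A}^*) = \mathfrak{R}(\mc{B}^*) \Leftrightarrow \mc{A}^* *_M (\mc{A}^\dag)^* = \mc{B}^* *_L (\mc{B}^\dag)^*$; recognising each side as the conjugate transpose of $\mc{A}^\dag *_M \mc{A}$ and $\mc{B}^\dag *_L \mc{B}$ respectively, and cancelling the outer $*$, yields the claimed identity.

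For~(c), I would first invoke Lemma~\ref{ID}(c) to get $\mathfrak{R}(\mc{B}^*) = \mathfrak{R}(\mc{B}^\dag)$. Since $\mc{B}^*$ and $\mc{B}^\dag$ both live in $\mathbb{C}^{J_1\times\cdots\times J_N \times K_1 \times\cdots\times K_L}$ and so share the common first block $J_1\times\cdots\times J_N$, Lemma~\ref{lemma25} produces tensors $\mc{U}, \mc{V} \in \mathbb{C}^{K_1\times\cdots\times K_L \times K_1\times\cdots\times K_L}$ with $\mc{B}^\dag = \mc{B}^* *_L \mc{U}$ and $\mc{B}^* = \mc{B}^\dag *_L \mc{V}$. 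Multiplying on the left by $\mc{A}$ via $*_N$ and using associativity gives $\mc{A}*_N \mc{B}^\dag = (\mc{A}*_N \mc{B}^*)*_L \mc{U}$, together with the symmetric relation, and the general observation $\mathfrak{R}(\mc{X}*_L\mc{Y}) \subseteq \mathfrak{R}(\mc{X})$, already used in the proof of Lemma~\ref{ID}(a), then yields both inclusions and hence the desired equality. The only real obstacle is careful bookkeeping of the Einstein subscripts when conjugating and when re-applying Lemma~\ref{ID} to tensors with swapped index roles; no ideas beyond those already present in Lemmas~\ref{ID} and~\ref{lemma25} are needed.
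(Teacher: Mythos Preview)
Your proposal is correct. The paper does not write out a proof of Lemma~\ref{IDR1} at all; it merely says ``using the method as in the proof of Lemma~\ref{ID}, one can prove the next Lemma,'' i.e.\ it invites the reader to rerun the arguments of Lemma~\ref{ID} in the transposed setting. Your approach is a mild but clean variant: rather than reproving each step, you \emph{reduce} to Lemma~\ref{ID} by applying it to the pair $(\mc{A}^*,\mc{B}^*)$ and then conjugate-transposing the resulting identities (using $(\mc{A}^*)^\dag=(\mc{A}^\dag)^*$ and the anti-multiplicativity of $*$). For part~(c) you likewise reduce to $\mathfrak{R}(\mc{B}^*)=\mathfrak{R}(\mc{B}^\dag)$ from Lemma~\ref{ID}(c) and then push this equality through left multiplication by $\mc{A}$ via Lemma~\ref{lemma25}. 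This is entirely in the same spirit as what the paper intends, and if anything slightly more economical since it avoids repeating the range-space arguments from scratch.
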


Consider $ \mc{A},~ \mc{B},~\mc{X} \in \mathbb{C}^{I_1 \times \cdots \times I_N \times I_1 \times \cdots \times I_N}$ and all are invertible, the following equation  
\begin{equation} \label{cand1}
\mc{B}\n(\mc{A}\n\mc{X}\n\mc{B})^{-1}\n\mc{A} = \mc{X}^{-1}  
\end{equation}
is called the cancellation property of product of tensors $(\mc{A},\mc{B},\mc{X})$. 
When
the ordinary inverse is replaced by generalized inverse with suitable order, this cancellation property is not true in general. In this context, we concentrate to characterize all triples $ (\mc{A},\mc{B},\mc{X}) $ which satisfy 
\begin{equation}\label{canra}
\mc{X}^\dg = \mc{B} \f (\mc{A}\m\mc{X}\n\mc{B})^\dg \1 \mc{A}
\end{equation}
where  $ \mc{X} \in \mathbb{C}^{I_1 \times \cdots \times I_M \times J_1 \times \cdots \times J_N} $, $ \mc{A} \in \mathbb{C}^{K_1 \times \cdots \times K_L \times I_1 \times \cdots \times I_M} $ and $ \mc{B} \in \mathbb{C}^{J_1 \times \cdots \times J_N \times H_1 \times \cdots \times H_R} $.  The first result obtained below deals with the necessary condition of this properties.

\begin{lemma}\label{CAN}
Let $ \mc{X} \in \mathbb{C}^{I_1 \times \cdots \times I_M \times J_1 \times \cdots \times J_N} $, $ \mc{A} \in \mathbb{C}^{K_1 \times \cdots \times K_L \times I_1 \times \cdots \times I_M} $ and $ \mc{B} \in \mathbb{C}^{J_1 \times \cdots \times J_N \times H_1 \times \cdots \times H_R } $.\\ 
If $ \mc{X}^\dag = \mc{B}*_R(\mc{A}*_M\mc{X}*_N\mc{B})^\dag *_L\mc{A}  $, then 
 ${ \mc{X} = \mc{A}^\dag *_L \mc{A}*_M\mc{X} } ~~and~~ {\mc{X} =\mc{X}*_N\mc{B}*_R\mc{B}^\dag}$.
\end{lemma}

\begin{proof}
Let, $ \mc{X}^\dag = \mc{B}*_R(\mc{A}*_M\mc{X}*_N\mc{B})^\dag *_L\mc{A} $. It is quite obvious that  $ \mc{X}^\dag = \mc{X}^\dag \m \mc{A}^\dag \1 \mc{A} = \mc{B} \f \mc{B}^\dag \n \mc{X}^\dag $.
Hence, from Lemma~\ref{ID}[(a),(c)] and Lemma~\ref{IDR1}(a), we have 
$ \mathfrak{R}[(\mc{X}^\dag)^*] \subseteq \mathfrak{R}(\mc{A}^*), $\\$ i.e, \mathfrak{R}(\mc{X}) \subseteq \mathfrak{R}(\mc{A}^\dag)$  and $ \mathfrak{R}(\mc{X}^\dag) \subseteq \mathfrak{R}(\mc{B}), i.e., \mathfrak{R}(\mc{X}^*) \subseteq \mathfrak{R}[(\mc{B}^\dag)^*]$.
Which implies $ \mc{X} = \mc{A}^\dag\1\mc{A}\m\mc{X} $ and $ \mc{X} = \mc{X}\n\mc{B}\f\mc{B}^\dag $.
%
%
\end{proof}
The following example shows that converse of the above theorem is not true in general.
\begin{example}
Consider tensors
$\mc{A} = (a_{ijkl})_{1 \leq i,j,k,l \leq 2}  \in \mathbb{R}^{ 2\times 2\times 2\times 2}$, $ \mc{B} = \mc{A}^* $  and  $ \mc{X} = (x_{ijkl})_{1 \leq i,j,k,l \leq 2}  \in \mathbb{R}^{2\times 2\times 2\times 2} $  such that
    
\begin{eqnarray*}
a_{ij11} =
    \begin{pmatrix}
    1 & -1\\
    0 & 0\\
    \end{pmatrix},
a_{ij21} =
    \begin{pmatrix}
    -1 & 0\\
    0 & 0\\
    \end{pmatrix},
a_{ij12} =
    \begin{pmatrix}
    0 & -1\\
    1 & 0\\
    \end{pmatrix},
a_{ij22} =
    \begin{pmatrix}
    1 & 0\\
    0 & -1\\
    \end{pmatrix}, ~    
\end{eqnarray*}
and 
\begin{eqnarray*}
x_{ij11} =
    \begin{pmatrix}
    1 & -1\\
    0 & 0\\
    \end{pmatrix},
x_{ij12} =
    \begin{pmatrix}
    0 & 1\\
    0 & 0\\
    \end{pmatrix},
x_{ij21} =
    \begin{pmatrix}
    0 & 0\\
    -1 & 0\\
    \end{pmatrix},
x_{ij22} =
    \begin{pmatrix}
    0 & 0\\
    1 & 0\\
    \end{pmatrix}, ~    
\end{eqnarray*}
Then, 
\begin{eqnarray*}
(\mc{A}^\dg)_{ij11} =
    \begin{pmatrix}
    0 & -1\\
    0 & 0\\
    \end{pmatrix},
(\mc{A}^\dg)_{ij21} =
    \begin{pmatrix}
    -1 & -1\\
    1 & 0\\
    \end{pmatrix},
(\mc{A}^\dg)_{ij12} =
    \begin{pmatrix}
    -1 & -1\\
    0 & 0\\
    \end{pmatrix},
(\mc{A}^\dg)_{ij22} =
    \begin{pmatrix}
    0 & -1\\
    0 & -1\\
    \end{pmatrix}.    
\end{eqnarray*}
Thus we have 
\begin{equation*}
\mc{A}^\dg *_2 \mc{A} *_2 \mc{X} = \mc{X} \textnormal{~~~and~~~} \mc{X} *_2\mc{B} *_2 \mc{B}^\dg = \mc{X},
\end{equation*} 
But \begin{equation*}
\mc{B}*_2(\mc{A} *_2 \mc{X} *_2\mc{B})^\dg *_2\mc{A} \neq \mc{X}^\dg,
\end{equation*} 
where 
\begin{eqnarray*}
(\mc{B}*_2(\mc{A} *_2 \mc{X} *_2\mc{B})^\dg *_2\mc{A})_{ij11} =
    \begin{pmatrix}
    1 & 1\\
    \frac{1}{2} &  \frac{1}{2}\\
    \end{pmatrix},
(\mc{B}*_2(\mc{A} *_2 \mc{X} *_2\mc{B})^\dg *_2\mc{A})_{ij21} =
    \begin{pmatrix}
    0 & 0\\
    -\frac{1}{2} & \frac{1}{2}\\
    \end{pmatrix},\\
(\mc{B}*_2(\mc{A} *_2 \mc{X} *_2\mc{B})^\dg *_2\mc{A})_{ij12} =
    \begin{pmatrix}
    0 & 1\\
    0 & 0\\
    \end{pmatrix},
(\mc{B}*_2(\mc{A} *_2 \mc{X} *_2\mc{B})^\dg *_2\mc{A})_{ij22} =
    \begin{pmatrix}
    0 & -1\\
    0 & 0\\
    \end{pmatrix}, ~    
\end{eqnarray*}
\begin{eqnarray*}
(\mc{X}^\dg)_{ij11} =
    \begin{pmatrix}
    1 & 1\\
    0 & 0\\
    \end{pmatrix},
(\mc{X}^\dg)_{ij21} =
    \begin{pmatrix}
    0 & 0\\
    -\frac{1}{2} & \frac{1}{2}\\
    \end{pmatrix},
(\mc{X}^\dg)_{ij12} =
    \begin{pmatrix}
    0 & 1\\
    0 & 0\\
    \end{pmatrix},
(\mc{X}^\dg)_{ij22} =
    \begin{pmatrix}
    0 & 0\\
    0 & 0\\
    \end{pmatrix}.    
\end{eqnarray*}
\end{example}
However, the converse of Lemma~\ref{CAN} holds under the assumption of additional condition which is stated below.

\begin{lemma}
Let $ \mc{X} \in \mathbb{C}^{I_1 \times \cdots \times I_M \times J_1 \times \cdots \times J_N }, \mc{A} \in \mathbb{C}^{K_1 \times \cdots \times K_L \times I_1 \times \cdots \times I_M}  $ and $ \mc{B} \in \mathbb{C}^{J_1 \times \cdots \times J_N \times H_1 \times \cdots \times H_R  } $.
If ${ \mc{X} = \mc{A}^\dag *_L \mc{A}*_M\mc{X} } { =\mc{X}*_N\mc{B}*_R\mc{B}^\dag}$  along with the condition 
$\mc{K} = \mc{A}^\dag \1 (\mc{A}\m\mc{X})\n(\mc{A}\m\mc{X})^\dag \1 \mc{A} $ and $\mc{L} = \mc{B}\f (\mc{X}\n \mc{B})^\dag \m(\mc{X}\n \mc{B})\f \mc{B}^\dag $ are Hermitian, Then $ \mc{X}^\dag = \mc{B}*_R(\mc{A}*_M\mc{X}*_N\mc{B})^\dag *_L\mc{A} $.
\end{lemma}

\begin{proof}

 Let $ \mc{W} = \mc{B}*_R(\mc{A}*_M\mc{X}*_N\mc{B})^\dag *_L\mc{A} $.\\
Now, $ \mc{X}*_N\mc{W}*_M\mc{X} \\
= (\mc{A}^\dag*_L\mc{A}*_M\mc{X}*_N\mc{B}*_R\mc{B}^\dag)*_N\mc{B}*_R(\mc{A}*_M\mc{X}*_N\mc{B})^\dag *_L\mc{A}*_M(\mc{A}^\dag *_L\mc{A}*_M\mc{X}*_N\mc{B}*_R\mc{B}^\dag)  $\\
= $ \mc{A}^\dag *_L[(\mc{A}*_M\mc{X}*_N\mc{B})*_R(\mc{A}*_M\mc{X}*_N\mc{B})^\dag *_L(\mc{A}*_M\mc{X}*_N\mc{B})]*_R\mc{B}^\dag \\
= \mc{A}^\dag *_L(\mc{A}*_M\mc{X}*_N\mc{B}) *_R\mc{B}^\dag 
= \mc{X} $.\\
Further, $\mc{W}*_M\mc{X}*_N\mc{W} = \mc{B}*_R(\mc{A}*_M\mc{X}*_N\mc{B})^\dag *_L\mc{A}*_M\mc{X}*_N\mc{B} *_R(\mc{A}*_M\mc{X}*_N\mc{B})^\dag*_L \mc{A} = \mc{W}  $.\\
Again $ \mc{K} = \mc{X}*_N\mc{W} $ and $ \mc{L}=\mc{W}*_M\mc{X} $ are Hermitian.
Hence $ \mc{W} = \mc{X}^\dag $.
\end{proof}

From Lemma~\ref{CAN} It is clear that, If $ \mc{X}^\dg = \mc{B}\f(\mc{A}\m\mc{X}\n\mc{B})^\dg\1\mc{A} $, Then $ \mathfrak{R}(\mc{A} \m \mc{X}) = \mathfrak{R}(\mc{A}\m\mc{X}\n\mc{B}) $, which implies that $ (\mc{A}\m\mc{X}\n\mc{B})\f(\mc{A}\m\mc{X}\n\mc{B})^\dg = \mc{A}\m\mc{X}\n(\mc{A}\m\mc{X})^\dg $. It is easy to verify that 
$ \mc{X}\n\mc{X}^\dg = \mc{K} $ and $ \mc{X}^\dg\m\mc{X}=  \mc{L}$,   and both $ \mc{K}$ and $\mc{L}$ both are Hermitian. Therefore, a necessary and sufficient condition for the cancellation law can be stated as:

\begin{theorem}\label{CAN1}
Let $ \mc{X} \in \mathbb{C}^{I_1 \times \cdots \times I_M \times J_1 \times \cdots \times J_N }, \mc{A} \in \mathbb{C}^{K_1 \times \cdots \times K_L \times I_1 \times \cdots \times I_M}  $ and $ \mc{B} \in \mathbb{C}^{J_1 \times \cdots \times J_N \times H_1 \times \cdots \times H_R  } $.\\
$ \mc{X}^\dag = \mc{B}*_R(\mc{A}*_M\mc{X}*_N\mc{B})^\dag *_L\mc{A} $ if and only if
 ${ \mc{X} = \mc{A}^\dag *_L \mc{A}*_M\mc{X} } { =\mc{X}*_N\mc{B}*_R\mc{B}^\dag}$  and both $\mc{K} = \mc{A}^\dag \1 (\mc{A}\m\mc{X})\n(\mc{A}\m\mc{X})^\dag \1 \mc{A} $ and $\mc{L} = \mc{B}\f (\mc{X}\n \mc{B})^\dag \m(\mc{X}\n \mc{B})\f \mc{B}^\dag $ are Hermitian. 
\end{theorem}
We now proceed to discuss a few necessary and sufficient conditions for the  cancellation law.
\begin{corollary}\label{can1}
Let $ \mc{X} \in \mathbb{C}^{I_1 \times \cdots \times I_M \times J_1 \times \cdots \times J_N }, \mc{A} \in \mathbb{C}^{K_1 \times \cdots \times K_L \times I_1 \times \cdots \times I_M}  $ and $ \mc{B} \in \mathbb{C}^{J_1 \times \cdots \times J_N \times H_1 \times \cdots \times H_R }$,  and 
$ \mc{X}^\dag = \mc{B}*_R(\mc{A}*_M\mc{X}*_N\mc{B})^\dag *_L\mc{A} $ if and only if both the equations
\begin{equation}
  {  \mc{X}^\dag = (\mc{A}\m\mc{X})^\dag \1 \mc{A} ~~and~~ \mc{X}^\dag = \mc{B}\f(\mc{X}\n\mc{B})^\dag} ~~ are~ satisfied. 
\end{equation}

\end{corollary}

\begin{proof}
By taking  $ \mc{B} = \mc{I}  $ in Theorem~\ref{CAN1}, we have $ \mc{X}^\dag = (\mc{A}*_M\mc{X})^\dag *_L\mc{A}  $ if and only if  
$ \mc{A}^\dag*_L(\mc{A}\m\mc{X})\n(\mc{A}\m\mc{X})^\dg *_L\mc{A} $ is Hermitian and $ \mc{X}= \mc{A}^\dag *_L\mc{A}*_M\mc{X} $. Similarly with the special case $\mc{A} = \mc{I} $ in Theorem~\ref{CAN1}, we get 
$ \mc{X}^\dag =\mc{B}*_R(\mc{X}*_N\mc{B})^\dag $   if and  only if $ \mc{B}\f (\mc{X}\n \mc{B})^\dag \m(\mc{X}\n \mc{B})\f \mc{B}^\dag  $ is Hermitian and $ \mc{X} = \mc{X}*_N\mc{B}*_R\mc{B}^\dag  $. Using the fact of Theorem\ref{CAN1} one can prove the required result.
\end{proof}

Using Lemma~\ref{ID} and Lemma~\ref{IDR1} in the corollary~\ref{can1} one obtain the following result.

\begin{theorem}
Let $ \mc{X} \in \mathbb{C}^{I_1 \times \cdots \times I_M \times J_1 \times \cdots \times J_N }$, $ \mc{A} \in \mathbb{C}^{K_1 \times \cdots \times K_L \times I_1 \times \cdots \times I_M}  $ and $ \mc{B} \in \mathbb{C}^{J_1 \times \cdots \times J_N \times H_1 \times \cdots \times H_R}$, then
\begin{equation*}
\mc{X}^\dag = \mc{B}*_R(\mc{A}*_M\mc{X}*_N\mc{B})^\dag *_L\mc{A}
\end{equation*}
if and only if
\begin{equation*}
(\mc{A}*_M\mc{X})^\dag = \mc{X}^\dag *_M\mc{A}^\dag,~~  (\mc{X}*_N\mc{B})^\dag = \mc{B}^\dag *_N\mc{X}^\dag, ~~\mc{X} = \mc{A}^\dag *_L\mc{A}*_M\mc{X} ~~\textnormal{and} ~~\mc{X} = \mc{X}*_N\mc{B}*_R\mc{B}^\dag.
\end{equation*}
\end{theorem}

\begin{proof}
Suppose that $ \mc{X}^\dag = \mc{B}*_R(\mc{A}*_M\mc{X}*_N\mc{B})^\dag *_L\mc{A} $.
Then from Corollary~\ref{can1}, \\
$\mc{X}^\dag = (\mc{A}\m\mc{X})^\dag \1\mc{A} $ and  $ \mc{X}^\dag = \mc{B}\f(\mc{X}\n\mc{B})^\dag$.\\
Now, $ \mathfrak{R}(\mc{X}) = \mathfrak{R}[(\mc{X}^\dag)^*] = \mathfrak{R}[\mc{A}^* *_L\{(\mc{A}*_M\mc{X})^\dag\}^*] = \mathfrak{R}(\mc{A}^* *_L\mc{A}*_M\mc{X})$  and $ \mathfrak{R}(\mc{X}^*) = \mathfrak{R}(\mc{X}^\dag) = \mathfrak{R}[\mc{B}*_R(\mc{X}*_N\mc{B})^\dag] = \mathfrak{R}[\mc{B}*_R(\mc{X}*_N\mc{B})^*] = \mathfrak{R}(\mc{B}*_R\mc{B}^* *_N\mc{X}^*) $.\\
 Therefore, $ \mathfrak{R}(\mc{X}*_N\mc{X}^* *_M\mc{A}^*) \subseteq  \mathfrak{R}(\mc{X}) =  \mathfrak{R}(\mc{A}^* *_L\mc{A}*_M\mc{X}) \subseteq \mathfrak{R}(\mc{A}^*) $, i.e., $ \mathfrak{R}(\mc{X}*_N\mc{X}^* *_M\mc{A}^*) \subseteq \mathfrak{R}(\mc{A}^*) $
 and 
 $ \mathfrak{R}(\mc{A}^*\1\mc{A}\m\mc{X}) \subseteq \mathfrak{R}(\mc{X}) $  implies that $ (\mc{A}*_M\mc{X})^\dag = \mc{X}^\dag *_M\mc{A}^\dag $.\\
 $ \mathfrak{R}(\mc{X}) \subseteq \mathfrak{R}(\mc{A}^\dag) $  implies $ \mc{A}^\dag *_L\mc{A}*_M\mc{X} = \mc{X} $.
 Similarly from $ \mathfrak{R}(\mc{X}^*) = \mathfrak{R}(\mc{B}\f\mc{B}^*\n\mc{X}^*)  $ it follows that $ (\mc{X}*_N\mc{B})^\dag = \mc{B}^\dag *_N\mc{X}^\dag $,
 $ \mc{X} = \mc{X}*_N\mc{B}*_R\mc{B}^\dag $.\\
  Conversely, Using Lemma \ref{ID}(c), Lemma\ref{IDR1}(a) in the fact \\
  $ \mathfrak{R}[(\mc{X}^\dg)^*] = \mathfrak{R}(\mc{X}) \subseteq \mathfrak{R}(\mc{A}^\dg) = \mathfrak{R}(\mc{A}^*) $ and $ \mathfrak{R}(\mc{X}^\dg) = \mathfrak{R}(\mc{X}^*) \subseteq \mathfrak{R}[(\mc{B}^\dg)^*] = \mathfrak{R}(\mc{B}).$\\
One have $ \mc{X}^\dg = \mc{X}^\dg\m\mc{A}^\dg\1\mc{A} $ and $ \mc{X}^\dg = \mc{B}\f\mc{B}^\dg\n\mc{X}^\dg  $.\\ 
 Now, $ (\mc{A}*_M\mc{X})^\dag*_L\mc{A} = \mc{X}^\dag *_M\mc{A}^\dag *_L\mc{A} = \mc{X}^\dag  $ and $ \mc{B}*_R(\mc{X}*_N\mc{B})^\dag = \mc{B}*_R\mc{B}^\dag *_N\mc{X}^\dag = \mc{X}^\dg$. then by Corollary\ref{can1} proof is done. 
 \end{proof}


From the above theorem one can conclude the necessary and sufficient condition for cancellation law in terms of range.
\begin{lemma}\label{can2}
Let $ \mc{X} \in \mathbb{C}^{I_1 \times \cdots \times I_M \times J_1 \times \cdots \times J_N} $, $ \mc{A} \in \mathbb{C}^{K_1 \times \cdots \times K_L \times I_1 \times \cdots \times I_M} $ and $ \mc{B} \in \mathbb{C}^{J_1 \times \cdots \times J_N \times H_1 \times \cdots \times H_R   } $.
Then 
\begin{equation*}
\mc{X}^\dg = \mc{B}\f(\mc{A}\m\mc{X}\n\mc{B})^\dg\1\mc{A}
\end{equation*}
if and only if 
\begin{equation*}
\mathfrak{R}(\mc{X}) = \mathfrak{R}(\mc{A}^*\1\mc{A}\m\mc{X})   \textnormal{~~and~~}
\mathfrak{R}(\mc{X}^*) = \mathfrak{R}(\mc{B}\f\mc{B}^*\n\mc{X}^*).
\end{equation*}
\end{lemma}

\subsection{Weighted Moore-Penrose inverse}

Weighted Moore-Penrose inverse of even-order tensor, $\mc{A} \in \mathbb{C}^{I_1\times\cdots\times I_K \times J_1 \times\cdots\times J_K}$ was introduced in \cite{weit2}, very recently. Here we have discussed weighted Moore-Penrose inverse for an arbitrary-order tensor via Einstein product, which is a special case of generalized weighted Moore-Penrose inverse.

\begin{definition}\label{43}
Let $\mc{A} \in \mathbb{C}^{I_1\times\cdots\times I_M \times J_1 \times\cdots\times J_N} $,
 and a pair of invertible and Hermitian tensors $\mc{M} \in \mathbb{C}^{I_1\times\cdots\times I_M \times I_1 \times\cdots\times I_M}$ and $\mc{N} \in \mathbb{C}^{J_1\times\cdots\times J_N \times J_1 \times\cdots\times J_N}$. A tensor $\mc{Y} \in
 \mathbb{C}^{J_1\times\cdots\times J_N \times I_1 \times\cdots\times I_M}$ is said to be the \textbf{generalized weighted Moore-Penrose inverse} of $\mc{A}$ with respect to $\mc{M}$ and $\mc{N}$, if $\mc{Y}$ satisfies the following four tensor equations
\begin{eqnarray*}
&&(1)~\mc{A}*_N\mc{X}*_M\mc{A}= \mc{A};\\
&&(2)~\mc{X}*_M\mc{A}*_N\mc{X}= \mc{X};\\
&&(3)~(\mc{M}*_M \mc{A}*_N\mc{X})^* = \mc{M}*_M\mc{A}*_N\mc{X};\\
&&(4)~(\mc{N}*_N \mc{X}*_M\mc{A})^* =\mc{N}*_N \mc{X}*_M\mc{A}.
\end{eqnarray*}
In particular, when both $\mc{M},~\mc{N}$ are Hermitian positive definite tensors, the tensor  $\mc{Y}$ 
is called the {\textbf{weighted Moore-Penrose inverse}} of $\mc{A}$ and denote by $\mc{A}_{\mc{M},\mc{N}}^\dg$. 
\end{definition}
 However, the generalized weighted Moore-Penrose inverse $\mc{Y}$ does not always exist for any tensor $\mc{A}$, 
as shown below with an example. 
\begin{example}
Consider tensors
$~\mc{A}=(a_{ijk})
 \in \mathbb{R}^{\overline{2\times3}\times\overline{2}}$ and $\mc{M}=(a_{ijkl})
 \in \mathbb{R}^{\overline{2\times3}\times\overline{2\times3}}$ with $\mc{N}=(n_{ij})
 \in \mathbb{R}^{\overline{2}\times\overline{2}}$ such that
\begin{eqnarray*}
a_{ij1} =
    \begin{pmatrix}
    1 & 0 & 1 \\
   -1& 2 &  1
    \end{pmatrix},
a_{ij2} =
    \begin{pmatrix}
     2 & 0 & 3\\
     2 & 0 & 1
    \end{pmatrix}
     ~~\textnormal{and}~~~  N =
    \begin{pmatrix}
     2 & 0\\
   0 & -1
    \end{pmatrix} 
\end{eqnarray*}
with
\begin{eqnarray*}
m_{ij11} = 
    \begin{pmatrix}
2 & 0 & 0\\
0 & 0 & 0
    \end{pmatrix},
m_{ij12} =
    \begin{pmatrix}
0 & 2 & 0\\
0 & 0 & 0
    \end{pmatrix},
m_{ij13} =
    \begin{pmatrix}
0 & 0 & 1\\
0 & 0 & 0
    \end{pmatrix},
\end{eqnarray*}
\begin{eqnarray*}
m_{ij21} = 
    \begin{pmatrix}
0 & 0 & 0\\
-1 & 0 & 0
    \end{pmatrix},
m_{ij22} =
    \begin{pmatrix}
0 & 0 & 0\\
0 & 1 & 0
    \end{pmatrix},
m_{ij23} =
    \begin{pmatrix}
0 & 0 & 0\\
0 & 0 & 3
    \end{pmatrix},
\end{eqnarray*}
Then we have 
\begin{equation*}
\mc{A}^T *_2 \mc{M}*_2\mc{A}=\begin{pmatrix}
9 & 12\\
12 & 16 
    \end{pmatrix},
\end{equation*}
This shows $\mc{A}^T*_2\mc{M}*_2\mc{A}$ is not invertible. Consider the generalized weighted Moore-Penrose inverse $\mc{Y} \in \mathbb{R}^{\overline{2}\times \overline{2\times 3}}$ of the given tensor $\mc{A}$ is exist, then using relation (1) and relation (3) of Definition \ref{43}, we have
\begin{equation}\label{aas}
\mc{A}*_1 \mc{Y}*_2 \mc{M}^{-1}*_2 \mc{Y}^T *_1
\mc{A}^T *_2 \mc{M}*_2 \mc{A} = \mc{A}.
\end{equation}  
Since $(\mc{A}^T*_2\mc{A})^{-1}*_1\mc{A}^T*_2\mc{A} = \mc{I}$, then $\mc{A}$ is left cancellable, now \eqref{aas} becomes
 \begin{equation}
\mc{Y}*_2 \mc{M}^{-1}*_2 \mc{Y}^T *_1
\mc{A}^T *_2 \mc{M}*_2 \mc{A} = \mc{I},
\end{equation}  
this follows that $\mc{A}^T *_2 \mc{M}*_2 \mc{A}$ is invertiable, which is a contradiction.
\end{example}

At this point one may be interested to know when does the generalized weighted Moore-Penrose inverse exist ? The answer to this question is explained in the following theorem. 
\begin{theorem}\label{GWMPI}
Let $\mc{A} \in \mathbb{C}^{I_1\times\cdots\times I_M \times J_1 \times\cdots\times J_N}$. If both $\mc{M} \in \mathbb{C}^{I_1\times\cdots\times I_M \times I_1 \times\cdots\times I_M}$ and $\mc{N} \in \mathbb{C}^{J_1\times\cdots\times J_N \times J_1 \times\cdots\times J_N}$ are Hermitian  positive definite tensors. Then generalized weighted Moore-Penrose inverse of an arbitrary-order tensor $\mc{A}$ exists and is unique, i.e., there exist a unique tensor $\mc{X} \in
 \mathbb{C}^{J_1\times\cdots\times J_N \times I_1 \times\cdots\times I_M}$, such that,  
\begin{equation} \label{2.17}
\mc{X}=\mc{A}_{\mc{M},\mc{N}}^\dg = \mc{N}^{-1/2} *_N ({\mc{M}^{1/2} *_M \mc{A} *_N\mc{N}^{-1/2}})^\dg *_M \mc{M}^{1/2}
\end{equation}
where $\mc{M}^{1/2}$ and $\mc{N}^{1/2}$ are square roots of $\mc{M}$ and $\mc{N}$ respectively, satisfy all four relations of Definition \ref{43}. 
\end{theorem}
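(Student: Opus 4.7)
The plan is to reduce the existence and uniqueness of the weighted Moore-Penrose inverse to the existence and uniqueness of the ordinary Moore-Penrose inverse (Theorem \ref{MPIunique}) applied to a suitably \emph{symmetrized} tensor. Since $\mc{M}$ and $\mc{N}$ are Hermitian positive definite, the construction preceding Definition \ref{43} supplies Hermitian positive definite square roots $\mc{M}^{1/2}$ and $\mc{N}^{1/2}$, both invertible, with $(\mc{M}^{1/2})^{*} = \mc{M}^{1/2}$, $(\mc{N}^{1/2})^{*} = \mc{N}^{1/2}$, and similarly for $\mc{M}^{-1/2}, \mc{N}^{-1/2}$. Introduce the auxiliary tensor
\begin{equation*}
\mc{B} \;=\; \mc{M}^{1/2} *_M \mc{A} *_N \mc{N}^{-1/2} \;\in\; \mathbb{C}^{I_1\times\cdots\times I_M \times J_1\times\cdots\times J_N}.
\end{equation*}
By Theorem \ref{MPIunique}, the tensor $\mc{B}^{\dg}$ exists and is unique, and the candidate in \eqref{2.17} can be rewritten as $\mc{X} = \mc{N}^{-1/2} *_N \mc{B}^{\dg} *_M \mc{M}^{1/2}$.

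For \textbf{existence}, I would verify that this $\mc{X}$ satisfies the four conditions (1)--(4) of Definition \ref{43} by direct substitution. Conditions (1) and (2) reduce to the first two Moore-Penrose conditions for $\mc{B}$ after sandwiching by $\mc{M}^{-1/2}\cdots\mc{M}^{1/2}$ (resp. $\mc{N}^{1/2}\cdots\mc{N}^{-1/2}$), once the telescoping $\mc{N}^{-1/2}*_N\mc{N}^{1/2} = \mc{I}_N$ and $\mc{M}^{-1/2}*_M\mc{M}^{1/2} = \mc{I}_M$ are used to collapse the middle. For (3), I compute
\begin{equation*}
\mc{M} *_M \mc{A} *_N \mc{X} \;=\; \mc{M}^{1/2} *_M (\mc{B} *_N \mc{B}^{\dg}) *_M \mc{M}^{1/2},
\end{equation*}
and since $\mc{B}*_N\mc{B}^{\dg}$ is Hermitian (third MP condition for $\mc{B}$) and $\mc{M}^{1/2}$ is Hermitian, the product is Hermitian. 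Condition (4) is entirely analogous using $\mc{N}^{1/2}$ and the fourth MP condition for $\mc{B}$.

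For \textbf{uniqueness}, suppose $\mc{X}'$ is any tensor of the correct order satisfying (1)--(4) of Definition \ref{43}. Set $\mc{Y}' = \mc{N}^{1/2} *_N \mc{X}' *_M \mc{M}^{-1/2}$. I would then verify that $\mc{Y}'$ is a Moore-Penrose inverse of $\mc{B}$ in the sense of Definition \ref{defmpi}: equations (1) and (2) for $\mc{Y}'$ and $\mc{B}$ follow by telescoping the square roots exactly as above, while equations (3) and (4) follow from the identities
\begin{equation*}
\mc{B} *_N \mc{Y}' \;=\; \mc{M}^{-1/2} *_M (\mc{M} *_M \mc{A} *_N \mc{X}') *_M \mc{M}^{1/2-1}\cdot\mc{M}^{1/2},
\end{equation*}
more cleanly $\mc{B} *_N \mc{Y}' = \mc{M}^{1/2} *_M (\mc{A} *_N \mc{X}') *_M \mc{M}^{-1/2} = \mc{M}^{-1/2} *_M(\mc{M}*_M\mc{A}*_N\mc{X}')*_M\mc{M}^{-1/2}$, and conjugating using that $\mc{M}*_M\mc{A}*_N\mc{X}'$ is Hermitian (condition (3) for $\mc{X}'$) together with $(\mc{M}^{-1/2})^{*} = \mc{M}^{-1/2}$. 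Symmetric work with $\mc{N}^{-1/2}$ handles $\mc{Y}'*_M\mc{B}$. Uniqueness of $\mc{B}^{\dg}$ from Theorem \ref{MPIunique} then forces $\mc{Y}' = \mc{B}^{\dg}$, and multiplying by $\mc{N}^{-1/2}$ on the left and $\mc{M}^{1/2}$ on the right gives $\mc{X}' = \mc{X}$.

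The main obstacle is purely bookkeeping: one must be attentive to which Einstein product ($*_M$ vs.\ $*_N$) is used at each step, and must repeatedly invoke the Hermitian nature of $\mc{M}^{\pm 1/2}$ and $\mc{N}^{\pm 1/2}$ when taking conjugate transposes in conditions (3) and (4). No new conceptual tool beyond Theorem \ref{MPIunique} and the square-root construction is required; the whole argument is a tensorial transcription of the classical matrix derivation of the weighted Moore-Penrose inverse.
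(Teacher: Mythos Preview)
Your argument is correct and matches the paper's route: the paper does not spell out a proof but simply remarks that the result follows from Theorem~1 in \cite{weit2} together with Theorem~\ref{MPIunique}, and your reduction---passing to $\mc{B}=\mc{M}^{1/2}*_M\mc{A}*_N\mc{N}^{-1/2}$, invoking existence and uniqueness of $\mc{B}^{\dg}$, and transporting the four Penrose conditions back and forth through the Hermitian square roots---is exactly that argument carried out in detail. Aside from the stray typo $\mc{M}^{1/2-1}\cdot\mc{M}^{1/2}$ (which you immediately correct), the computation is sound and nothing further is needed.
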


One can prove the above theorem, using  Theorem 1 in \cite{weit2} and Theorem \ref{MPIunique}. Further, it is know that identity tensors are always Hermitian and positive definite, therefore, for any 
$ \mc{A} \in \mathbb{C}^{I_1 \times \cdots \times I_M \times J_1 \times \cdots \times J_N} $, $ \mc{A}^\dg_{\mc{I}_M,\mc{I}_N} $ exists and $ \mc{A}^\dg_{\mc{I}_M,\mc{I}_N} = \mc{A}^\dg $, which is called the Moore-Penrose inverse of $\mc{A}$. Specifically, if we take $ \mc{M} =  \mc{I}_M $ or $\mc{N} = \mc{I}_N $ in Eq.\eqref{2.17}, then the following identities are hold.

\begin{corollary}\label{3.4}
Let $ \mc{A} \in \mathbb{C}^{I_1 \times \cdots \times I_M \times J_1 \times \cdots \times J_N} $. Then 
\begin{eqnarray*}\label{3.3.1}
&&(a)~\mc{A}_{\mc{M},\mc{I}_N}^\dg  = (\mc{M}^{1/2} *_M \mc{A})^\dg *_M \mc{M}^{1/2},\\
\label{3.3.2}
&&(b)~\mc{A}_{\mc{I}_M,\mc{N}}^\dg  = \mc{N}^{-1/2} *_N (\mc{A} *_N \mc{N}^{-1/2})^\dg.
\end{eqnarray*}
\end{corollary}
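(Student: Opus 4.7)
The plan is to deduce both identities as straightforward specializations of the closed-form expression in Theorem~\ref{GWMPI}, namely
\begin{equation*}
\mc{A}_{\mc{M},\mc{N}}^\dg = \mc{N}^{-1/2} *_N (\mc{M}^{1/2} *_M \mc{A} *_N \mc{N}^{-1/2})^\dg *_M \mc{M}^{1/2}.
\end{equation*}
The only preparatory remark I need is that each identity tensor is Hermitian positive definite and is its own positive-definite square root: writing $\mc{I}_N = \mc{I}_N *_N \mc{I}_N *_N \mc{I}_N^*$ exhibits $\mc{I}_N$ in the form required by the definition of Hermitian positive definite tensors (the diagonal factor is $\mc{I}_N$ itself, with unit, hence positive, diagonal entries), and therefore $\mc{I}_N^{1/2} = \mc{I}_N^{-1/2} = \mc{I}_N$. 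The same holds for $\mc{I}_M$.

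For part (a), I would substitute $\mc{N} = \mc{I}_N$ into the displayed formula. Because $\mc{I}_N$ is the multiplicative identity for $*_N$, we have $\mc{A} *_N \mc{I}_N = \mc{A}$ and $\mc{I}_N *_N \mc{Y} = \mc{Y}$ whenever the products are defined. The outer factor $\mc{N}^{-1/2} *_N$ and the inner factor $*_N \mc{N}^{-1/2}$ both collapse, leaving precisely $\mc{A}_{\mc{M},\mc{I}_N}^\dg = (\mc{M}^{1/2} *_M \mc{A})^\dg *_M \mc{M}^{1/2}$. Part (b) is entirely symmetric: setting $\mc{M} = \mc{I}_M$ makes $\mc{M}^{1/2} = \mc{I}_M$ disappear from the left end of the product and from the argument of $\dg$, yielding $\mc{A}_{\mc{I}_M,\mc{N}}^\dg = \mc{N}^{-1/2} *_N (\mc{A} *_N \mc{N}^{-1/2})^\dg$.

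There is no real obstacle here: the corollary is pure bookkeeping once Theorem~\ref{GWMPI} is available, resting on the two facts that $\mc{I}_M$ and $\mc{I}_N$ are self-square-rooting Hermitian positive definite weights and act as the identities for $*_M$ and $*_N$ respectively. The only point that calls for a line of care is making sure the Einstein-contraction subscripts on the resulting products match the tensor shapes (so the $*_M$ on the outside of (a) contracts the $I_1,\dots,I_M$ indices, and the $*_N$ in (b) contracts the $J_1,\dots,J_N$ indices), but these are forced by the types of the factors and coincide with what the specialization of Theorem~\ref{GWMPI} produces.
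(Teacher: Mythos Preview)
Your proposal is correct and follows precisely the paper's own approach: the text immediately preceding the corollary simply instructs the reader to set $\mc{M}=\mc{I}_M$ or $\mc{N}=\mc{I}_N$ in Eq.~\eqref{2.17}, relying on the fact that identity tensors are Hermitian positive definite (and hence equal to their own square roots). Your added remarks about why $\mc{I}_N^{1/2}=\mc{I}_N$ and about the contraction subscripts are fine elaborations but do not depart from the paper's reasoning.
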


Using the Definition \ref{43} and following the Lemma~2 in \cite{weit2} one can write  
 $ (\mc{A}_{\mc{M},\mc{N}}^\dg)_{\mc{N},\mc{M}}^\dg = \mc{A} $ and $ (\mc{A}_{\mc{M},\mc{N}}^\dg)^* = (\mc{A}^*)^\dg_{\mc{N}^{-1}, \mc{M}^{-1}} $, where $\mc{A}$ is any arbitrary-order tensor.

Now we define weighted conjugate transpose of a arbitrary-order tensor, as follows. 
\begin{definition}\label{210}
  Let $ \mc{M} \in \mathbb{C}^{I_1\times\cdots\times I_M \times I_1 \times\cdots\times I_M}$ and $\mc{N} \in \mathbb{C}^{J_1\times\cdots\times J_N \times J_1 \times\cdots\times J_N} $  are Hermitian positive definite tensors, the \textbf{weighted conjugate transpose} of $ \mc{A} \in \mathbb{C}^{I_1\times\cdots\times I_M \times J_1 \times\cdots\times J_N} $ is denoted by $ \mc{A}^{\#}_{\mc{N},\mc{M}}$ and defined as   $ \mc{A}^{\#}_{\mc{N},\mc{M}} =  \mc{N}^{-1} *_N\mc{A}^* *_M\mc{M} $.
\end{definition}

  Next we present the properties of the weighted conjugate transpose of any arbitrary-order tensor, $\mc{A} \in \mathbb{C}^{I_1\times\cdots\times I_M \times J_1 \times\cdots\times J_N}$, as follows.  
    
\begin{lemma}\label{34}
Let $\mc{A} \in \mathbb{C}^{I_1\times\cdots\times I_M \times J_1 \times\cdots\times J_N}$, $\mc{B} \in \mathbb{C}^{J_1\times\cdots\times J_N \times K_1 \times\cdots\times K_L}$ 
and  Hermitian positive definite tensors $\mc{M} \in \mathbb{C}^{I_1\times\cdots\times I_M \times I_1 \times\cdots\times I_M}$, $\mc{P} \in \mathbb{C}^{K_1\times\cdots\times K_L \times K_1 \times\cdots\times K_L}$  and $\mc{N} \in \mathbb{C}^{J_1\times\cdots\times J_N \times J_1 \times\cdots\times J_N}$.Then\\
(a) $ (\mc{A}^\#_{\mc{N},\mc{M}})^\#_{\mc{M},\mc{N}} = \mc{A} $,\\
(b) $ (\mc{A}\n\mc{B})^\#_{\mc{P},\mc{M}} = \mc{B}^\#_{\mc{P},\mc{N}}\n\mc{A}^\#_{\mc{N},\mc{M}} $.
\end{lemma}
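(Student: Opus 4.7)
The plan is to prove both identities by straight expansion using Definition~\ref{210}, together with the Hermitian assumption on $\mc{M},\mc{N},\mc{P}$ and the standard reverse-order law for the conjugate transpose under the Einstein product, namely $(\mc{X}\n\mc{Y})^{*} = \mc{Y}^{*}\n\mc{X}^{*}$. This last identity follows immediately from the reshape homomorphism of Lemma~\ref{SVDlemma}'s preliminaries (equation \eqref{66}) and the matrix fact $(XY)^{*}=Y^{*}X^{*}$; I would cite it without re-derivation.

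For part (a), I would first apply the definition of $\#_{\mc{M},\mc{N}}$ to $\mc{A}^{\#}_{\mc{N},\mc{M}}$ to obtain
\begin{equation*}
(\mc{A}^{\#}_{\mc{N},\mc{M}})^{\#}_{\mc{M},\mc{N}} \;=\; \mc{M}^{-1}\m \bigl(\mc{A}^{\#}_{\mc{N},\mc{M}}\bigr)^{*}\n \mc{N}.
\end{equation*}
Next, I would compute the inner conjugate transpose by expanding $\mc{A}^{\#}_{\mc{N},\mc{M}} = \mc{N}^{-1}\n\mc{A}^{*}\m\mc{M}$ and applying the reverse-order law twice to get $\mc{M}^{*}\m \mc{A}\n (\mc{N}^{-1})^{*}$. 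Using $\mc{M}^{*}=\mc{M}$, $\mc{N}^{*}=\mc{N}$, and therefore $(\mc{N}^{-1})^{*}=\mc{N}^{-1}$, substitution and associativity collapse the expression via $\mc{M}^{-1}\m\mc{M}=\mc{I}_{M}$ and $\mc{N}^{-1}\n\mc{N}=\mc{I}_{N}$ to recover $\mc{A}$.

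For part (b), I would expand the left-hand side as
\begin{equation*}
(\mc{A}\n\mc{B})^{\#}_{\mc{P},\mc{M}} \;=\; \mc{P}^{-1}\1 (\mc{A}\n\mc{B})^{*}\m\mc{M} \;=\; \mc{P}^{-1}\1 \mc{B}^{*}\n \mc{A}^{*}\m\mc{M},
\end{equation*}
where the reverse-order law for the conjugate transpose is used at the last step. Then I would insert the identity $\mc{N}\n\mc{N}^{-1}=\mc{I}_{N}$ between $\mc{B}^{*}$ and $\mc{A}^{*}$ and regroup using associativity of the Einstein product to produce $(\mc{P}^{-1}\1\mc{B}^{*}\n\mc{N})\n(\mc{N}^{-1}\n\mc{A}^{*}\m\mc{M})$, which is $\mc{B}^{\#}_{\mc{P},\mc{N}}\n\mc{A}^{\#}_{\mc{N},\mc{M}}$ by Definition~\ref{210}.

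Neither part presents a genuine obstacle; both are bookkeeping exercises. The only subtlety is keeping the contraction subscripts $\m$, $\n$, $\1$ aligned with the dimensions of the tensors being multiplied, since the Einstein product is sensitive to which group of indices is contracted. I would therefore include a brief indication of the order-type of each intermediate tensor to reassure the reader that each product is well-defined.
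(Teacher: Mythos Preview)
Your proposal is correct; both parts are indeed routine expansions from Definition~\ref{210} together with the reverse-order law $(\mc{X}\n\mc{Y})^{*}=\mc{Y}^{*}\n\mc{X}^{*}$ and the Hermitian hypothesis on the weight tensors. The paper itself states Lemma~\ref{34} without proof, so there is nothing to compare against; your direct verification is exactly the natural argument one would supply if a proof were written out.
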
     


Adopting the result of Lemma~\ref{34}(b) and the definition of the weighted Moore-Penrose inverse, we can write the following identities. 

 \begin{lemma}
 Let $\mc{A}\in \mathbb{C}^{I_1\times\cdots\times I_M \times J_1
\times\cdots\times J_N }$, and $ \mc{M} \in \mathbb{C}^{I_1\times\cdots\times I_M \times I_1 \times\cdots\times I_M}$ , $\mc{N} \in \mathbb{C}^{J_1\times\cdots\times J_N \times J_1 \times\cdots\times J_N} $  are Hermitian positive definite tensors. 
Then
\begin{enumerate}
\item[(a)] $ (\mc{A}^\#_{\mc{N},\mc{M}})^\dg_{\mc{N},\mc{M}} 
= (\mc{A}^\dg_{\mc{M},\mc{N}})^\#_{\mc{M},\mc{N}}   $
\item[(b)] $\mc{A} = \mc{A} \n \mc{A}^\#_{\mc{N},\mc{M}} \m (\mc{A}^\#_{\mc{N},\mc{M}})^{\dg}_{\mc{N},\mc{M}} 
= (\mc{A}^\#_{\mc{N},\mc{M}})^{\dg}_{\mc{N},\mc{M}} \n \mc{A}^\#_{\mc{N},\mc{M}} \m \mc{A};$
\item[(c)] $\mc{A}^\#_{\mc{N},\mc{M}} = \mc{A}^{\dg}_{\mc{M},\mc{N}} \m \mc{A} \n \mc{A}^\#_{\mc{N},\mc{M}} 
= \mc{A}^\#_{\mc{N},\mc{M}} \m \mc{A} \n \mc{A}^{\dg}_{\mc{M},\mc{N}}$.
\end{enumerate}
 \end{lemma}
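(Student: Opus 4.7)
The plan is to reduce all three identities to their unweighted counterparts already proved in Lemma~\ref{revA}, by introducing the auxiliary tensor
\[
\mc{A}' = \mc{M}^{1/2}\m\mc{A}\n\mc{N}^{-1/2}\in \mathbb{C}^{I_1\times\cdots\times I_M\times J_1\times\cdots\times J_N}.
\]
Since $\mc{M}^{1/2}$ and $\mc{N}^{1/2}$ are Hermitian and invertible, this substitution is harmless, and the explicit formula in Theorem~\ref{GWMPI} is exactly the statement that the weighted Moore-Penrose inverse of $\mc{A}$ pulls back to the ordinary Moore-Penrose inverse of $\mc{A}'$, conjugated by $\mc{M}^{\pm 1/2}$ and $\mc{N}^{\pm 1/2}$.

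First I would record the four ``conversion formulas'' obtained by direct substitution into the definitions: using $\mc{A} = \mc{M}^{-1/2}\m\mc{A}'\n\mc{N}^{1/2}$ together with Hermiticity of the square roots, one gets
\[
\mc{A}^{\#}_{\mc{N},\mc{M}} = \mc{N}^{-1/2}\n(\mc{A}')^{*}\m\mc{M}^{1/2},\qquad \mc{A}^{\dg}_{\mc{M},\mc{N}} = \mc{N}^{-1/2}\n(\mc{A}')^{\dg}\m\mc{M}^{1/2},
\]
and applying Theorem~\ref{GWMPI} to $\mc{B} := \mc{A}^{\#}_{\mc{N},\mc{M}}$ with weights $(\mc{N},\mc{M})$ combined with $((\mc{A}')^{*})^{\dg} = ((\mc{A}')^{\dg})^{*}$ produces
\[
(\mc{A}^{\#}_{\mc{N},\mc{M}})^{\dg}_{\mc{N},\mc{M}} = \mc{M}^{-1/2}\m\bigl((\mc{A}')^{\dg}\bigr)^{*}\n\mc{N}^{1/2}.
\]

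For part (a), an analogous direct computation gives $(\mc{A}^{\dg}_{\mc{M},\mc{N}})^{\#}_{\mc{M},\mc{N}} = \mc{M}^{-1/2}\m((\mc{A}')^{\dg})^{*}\n\mc{N}^{1/2}$, which coincides with the expression above, so the two sides are equal. For parts (b) and (c), substituting the conversion formulas into each candidate product causes all the square-root factors in the middle to telescope (since $\mc{M}^{1/2}\m\mc{M}^{-1/2}=\mc{I}_M$ and $\mc{N}^{1/2}\n\mc{N}^{-1/2}=\mc{I}_N$), reducing the identities to the unweighted statements
\[
\mc{A}' = \mc{A}'\n(\mc{A}')^{*}\m((\mc{A}')^{\dg})^{*} = ((\mc{A}')^{\dg})^{*}\n(\mc{A}')^{*}\m\mc{A}',
\]
\[
(\mc{A}')^{*} = (\mc{A}')^{\dg}\m\mc{A}'\n(\mc{A}')^{*} = (\mc{A}')^{*}\m\mc{A}'\n(\mc{A}')^{\dg},
\]
which are precisely Lemma~\ref{revA}(b) and (a) applied to $\mc{A}'$. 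After re-multiplying by the outer $\mc{M}^{-1/2}$ and $\mc{N}^{1/2}$ factors one recovers $\mc{A}$ in the first line and $\mc{A}^{\#}_{\mc{N},\mc{M}}$ in the second, finishing the proof.

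The only real difficulty is bookkeeping: tensors of shape $I\times J$ and shape $J\times I$ coexist in every line, and the operators $\m$ and $\n$ must be matched with the right side of each factor. I would keep this clean by annotating each equality with the shape of the tensor it acts on, and by treating the part (a) computation as a template whose intermediate identity $(\mc{A}^{\#}_{\mc{N},\mc{M}})^{\dg}_{\mc{N},\mc{M}} = \mc{M}^{-1/2}\m((\mc{A}')^{\dg})^{*}\n\mc{N}^{1/2}$ is re-used in (b). No step requires a new idea beyond the reduction $\mc{A}\mapsto\mc{A}'$.
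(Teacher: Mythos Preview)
Your proof is correct. The conversion formulas you record are exactly right, and once they are in hand parts (b) and (c) do telescope precisely to Lemma~\ref{revA}(b) and~(a) for $\mc{A}'$, while part (a) is a direct comparison of two expressions that both equal $\mc{M}^{-1/2}\m((\mc{A}')^{\dg})^{*}\n\mc{N}^{1/2}$.

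The paper itself gives no detailed argument for this lemma: it simply states that the identities follow from Lemma~\ref{34}(b) and the definition of the weighted Moore--Penrose inverse. Your route---introducing the auxiliary $\mc{A}' = \mc{M}^{1/2}\m\mc{A}\n\mc{N}^{-1/2}$ via Theorem~\ref{GWMPI} and then invoking the unweighted Lemma~\ref{revA}---is a concrete execution of the same underlying mechanism (the formula $\mc{A}^{\dg}_{\mc{M},\mc{N}} = \mc{N}^{-1/2}\n(\mc{A}')^{\dg}\m\mc{M}^{1/2}$), organized as a systematic reduction rather than a direct verification of the four defining equations. The advantage of your presentation is that it makes the dependence on Lemma~\ref{revA} explicit and reusable; the paper's one-line hint is terser but leaves the reader to reconstruct essentially what you wrote.
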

 
Using the Lemma 3.17 in \cite{PanRad18} on two invertible tensors $\mc{B}  \in \mathbb{C}^{I_1\times\cdots\times I_M \times I_1 \times\cdots\times I_M} $ and $\mc{C}  \in \mathbb{C}^{J_1\times\cdots\times J_N \times J_1 \times\cdots\times J_N}$, one can write the following identities
\begin{eqnarray}\label{KRDresult}
 (\mc{B}\m \mc{A})^\dg \m \mc{B}\m \mc{A} = \mc{A}^\dg \m \mc{A} ~~and~~
\mc{A}*_N \mc{C}\n (\mc{A}\n \mc{C})^\dg =\mc{A} \n \mc{A}^\dg
\end{eqnarray}
where $\mc{A}$ is the arbitrary-order tensor, i.e., $\mc{A} \in \mathbb{C}^{I_1\times\cdots\times I_M \times J_1 \times\cdots\times J_N}$. By  Eq.\eqref{KRDresult} and Corollary~\ref{3.4}, we get following results.

\begin{lemma}\label{42}
Let $\mc{A} \in \mathbb{C}^{I_1\times\cdots\times I_M \times J_1 \times\cdots\times J_N} $, and $\mc{M}  \in \mathbb{C}^{I_1\times\cdots\times I_M \times I_1 \times\cdots\times I_M} $, $\mc{N} \in \mathbb{C}^{J_1\times\cdots\times J_N \times J_1 \times\cdots\times J_N}$ be a pair of Hermitian positive definite tensors. Then\\
(a)~$\mc{A}_{\mc{M},\mc{I}_N}^\dg *_M \mc{A} =(\mc{M}^{1/2} *_M\mc{A})^\dg *_M \mc{M}^{1/2} *_M \mc{A} = \mc{A}^\dg *_M \mc{A} $,\\
(b)~$ \mc{A} *_N \mc{A}_{\mc{I}_M,\mc{N}}^\dg =\mc{A} *_N \mc{N}^{-1/2} *_N(\mc{A} *_N \mc{N}^{-1/2})^\dg = \mc{A} *_N \mc{A}^\dg $,\\
(c)~$ (\mc{A}_{\mc{M},\mc{I}_N}^\dag)^* = \mc{M}^{1/2} *_M [\mc{A}^* *_M \mc{M}^{1/2}]^\dg $,\\
(d)~$ (\mc{A}_{\mc{I}_M,\mc{N}}^\dag)^* =(\mc{N}^{-1/2}*_N\mc{A}^*)^\dg*_N \mc{N}^{-1/2} $.
\end{lemma}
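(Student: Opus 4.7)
The plan is to prove each of the four identities (a)--(d) by direct computation, starting from the explicit formulas in Corollary~\ref{3.4} and then applying, respectively, equation~\eqref{KRDresult} for parts (a)--(b) and the standard conjugate-transpose identity $(\mc{X}^\dg)^* = (\mc{X}^*)^\dg$ together with Hermiticity of $\mc{M}^{1/2}$ and $\mc{N}^{-1/2}$ for parts (c)--(d). Since $\mc{M}$ and $\mc{N}$ are Hermitian positive definite, their square roots $\mc{M}^{1/2}$ and $\mc{N}^{1/2}$ (and hence $\mc{N}^{-1/2}$) are Hermitian and invertible; this is exactly the hypothesis needed to apply \eqref{KRDresult}, which was stated for an invertible left/right factor multiplying an arbitrary-order tensor.

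For part (a), I first substitute the formula $\mc{A}_{\mc{M},\mc{I}_N}^\dg = (\mc{M}^{1/2}*_M\mc{A})^\dg *_M \mc{M}^{1/2}$ from Corollary~\ref{3.4}(a) to obtain the middle expression. Then I apply the first identity in \eqref{KRDresult} with the invertible tensor $\mc{B} := \mc{M}^{1/2}$ playing the role of the left factor, yielding
\[
(\mc{M}^{1/2}*_M\mc{A})^\dg *_M \mc{M}^{1/2}*_M\mc{A} \;=\; \mc{A}^\dg *_M \mc{A}.
\]
Part (b) is symmetric: substitute $\mc{A}_{\mc{I}_M,\mc{N}}^\dg = \mc{N}^{-1/2} *_N (\mc{A}*_N\mc{N}^{-1/2})^\dg$ and then invoke the second identity in \eqref{KRDresult} with $\mc{C} := \mc{N}^{-1/2}$ to collapse the right-hand side to $\mc{A}*_N\mc{A}^\dg$.

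For part (c), start from $\mc{A}_{\mc{M},\mc{I}_N}^\dg = (\mc{M}^{1/2}*_M\mc{A})^\dg *_M \mc{M}^{1/2}$ and take the conjugate transpose. The conjugate transpose reverses order of the Einstein product (as recorded in the general remark preceding Lemma~\ref{34}), so
\[
(\mc{A}_{\mc{M},\mc{I}_N}^\dg)^* \;=\; (\mc{M}^{1/2})^* *_M \bigl[(\mc{M}^{1/2}*_M\mc{A})^\dg\bigr]^*.
\]
Using $(\mc{M}^{1/2})^* = \mc{M}^{1/2}$ (Hermiticity) and $(\mc{X}^\dg)^* = (\mc{X}^*)^\dg$ (which follows from the special case $\mc{M}=\mc{I}_M$, $\mc{N}=\mc{I}_N$ of the identity $(\mc{A}_{\mc{M},\mc{N}}^\dg)^* = (\mc{A}^*)^\dg_{\mc{N}^{-1},\mc{M}^{-1}}$ noted after Theorem~\ref{GWMPI}), the bracketed factor becomes $[\mc{A}^* *_M \mc{M}^{1/2}]^\dg$, giving the claimed form. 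Part (d) is the mirror argument applied to Corollary~\ref{3.4}(b), using that $\mc{N}^{-1/2}$ is Hermitian.

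There is no serious obstacle here; the entire proof is a bookkeeping exercise once Corollary~\ref{3.4}, equation~\eqref{KRDresult}, and the compatibility of $\dg$ with $*$ are in hand. The only point requiring a small amount of care is ensuring that the order subscripts on each Einstein product in parts (c) and (d) match after conjugate-transposing — in particular, that the Hermitian square roots $\mc{M}^{1/2}\in\mathbb{C}^{I_1\times\cdots\times I_M\times I_1\times\cdots\times I_M}$ and $\mc{N}^{-1/2}\in\mathbb{C}^{J_1\times\cdots\times J_N\times J_1\times\cdots\times J_N}$ have dimensions compatible with $\mc{A}^*\in\mathbb{C}^{J_1\times\cdots\times J_N\times I_1\times\cdots\times I_M}$ under the stated products.
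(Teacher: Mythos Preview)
Your proposal is correct and follows essentially the same route as the paper, which simply states that the lemma follows ``using Eq.\eqref{KRDresult} and Corollary~\ref{3.4}'' without writing out the details. Your write-up is in fact a bit more precise than the paper's one-line justification: you correctly isolate that parts (c) and (d) do not actually require \eqref{KRDresult} but only Corollary~\ref{3.4} together with Hermiticity of $\mc{M}^{1/2}$, $\mc{N}^{-1/2}$ and the identity $(\mc{X}^\dg)^* = (\mc{X}^*)^\dg$.
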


The considerable amount of conventional and important facts  with the properties concerning the range space of arbitrary-order tensor, the following theorem obtains the well-formed result.

\begin{theorem}\label{ABThe45}
Let $\mc{U}\in \mathbb{C}^{I_1\times\cdots\times I_M \times J_1
\times\cdots\times J_N }$, $\mc{V}\in \mathbb{C}^{J_1\times\cdots\times J_N \times K_1\times\cdots\times K_L }$. Let $\mc{M}  \in \mathbb{C}^{I_1\times\cdots\times I_M \times I_1 \times\cdots\times I_M}$ and $\mc{N} \in \mathbb{C}^{K_1\times\cdots\times k_L \times K_1 \times\cdots\times K_L}$ be a pair of Hermitian positive definite tensors. Then
\begin{equation*}
(\mc{U} *_N \mc{V})_{\mc{M},\mc{N}}^\dg =[(\mc{U}_{\mc{M},\mc{I}_N}^\dg)^* *_N \mc{V}]_{\mc{M}^{-1},\mc{N}}^\dg *_M (\mc{V}_{\mc{I}_N,\mc{N}}^\dg *_N \mc{U}_{\mc{M},\mc{I}_N}^\dg)^* *_L [\mc{U} *_N(\mc{V}_{\mc{I}_N,\mc{N}}^\dg) ^ *]_{\mc{M},\mc{N}^{-1}}^\dg.
\end{equation*}
\end{theorem}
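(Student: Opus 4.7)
The cleanest strategy is to push the identity through the reshape isomorphism $rsh$ of Definition 2.4, reducing it to a statement about matrices, and then invoke the classical (weighted) Greville/Cline-type formula for the Moore-Penrose inverse of a matrix product. The reshape map is already the backbone of the paper, so this mirrors how Lemma \ref{SVDlemma} and Theorem \ref{GWMPI} themselves were proved; once the tensorial identity is matricialized, standard matrix-level reasoning finishes the argument.

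Concretely, I would first assemble three compatibility ingredients: (i) multiplicativity $rsh(\mc{A}*_N \mc{B}) = rsh(\mc{A})\, rsh(\mc{B})$ from Eq.\eqref{66}; (ii) commutation of $rsh$ with conjugate transpose, which is immediate from the index formulas since both operations permute only the partition of the index set; and (iii) the key identity $rsh(\mc{A}^{\dg}_{\mc{M},\mc{N}}) = (rsh\,\mc{A})^{\dg}_{rsh\,\mc{M},\,rsh\,\mc{N}}$, obtained by applying $rsh$ to formula \eqref{2.17} and observing that reshape sends Hermitian positive definite tensors (and their square roots) to Hermitian positive definite matrices (and their square roots). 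With these in hand, one $rsh$-es both sides of the claim and obtains the matrix identity
\[
(UV)^{\dg}_{M,N} = \bigl[(U^{\dg}_{M,I})^* V\bigr]^{\dg}_{M^{-1},N}\,\bigl(V^{\dg}_{I,N}\, U^{\dg}_{M,I}\bigr)^*\,\bigl[U (V^{\dg}_{I,N})^*\bigr]^{\dg}_{M,N^{-1}},
\]
where $U = rsh(\mc{U})$, $V = rsh(\mc{V})$, $M = rsh(\mc{M})$, $N = rsh(\mc{N})$. If one does not wish to cite this matrix formula, it can be verified directly by checking the four Penrose-type axioms of Definition \ref{43} on the right-hand side; uniqueness in Theorem \ref{GWMPI} then forces equality. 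Each of the four axioms, after Corollary \ref{3.4} has been used to rewrite the weighted inverses as ordinary Moore-Penrose inverses conjugated by $M^{1/2}$ and $N^{-1/2}$, collapses to the reverse-order/range manipulations already supplied by Lemmas \ref{ID} and \ref{IDR1} together with Eq.\eqref{lemma37}.

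The principal obstacle is bookkeeping rather than any conceptual step: the right-hand side stacks three weighted Moore-Penrose inverses carrying six distinct weight/identity pairings ($\mc{M}, \mc{M}^{-1}, \mc{N}, \mc{N}^{-1}, \mc{I}_M, \mc{I}_N$) and three different Einstein products ($*_M, *_N, *_L$), and one must verify at every stage that the compositions are legal and land in the right tensor space. I would manage this by setting the shorthands $\mc{A} := \mc{U}^{\dg}_{\mc{M},\mc{I}_N}$ and $\mc{B} := \mc{V}^{\dg}_{\mc{I}_N,\mc{N}}$, using Lemma \ref{42} (a)--(d) to convert $\mc{M}^{1/2}*_M \mc{A}^*$ and $\mc{B}^* *_N \mc{N}^{1/2}$ back into plain Moore-Penrose expressions, and only then invoking the homomorphism property of $rsh$. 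After this reduction the whole identity is an accounting exercise on $U$, $V$, $M^{1/2}$, $N^{-1/2}$, and their Moore-Penrose inverses, and the Penrose axioms fall out from range-space identities already in hand.
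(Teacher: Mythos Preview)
Your strategy is valid but takes a genuinely different route from the paper. The paper never invokes the reshape isomorphism here; instead it argues entirely at the tensor level. It first proves the \emph{unweighted} identity
\[
(\mc{U}*_N\mc{V})^\dg = \mc{X}^\dg *_M (\mc{V}^\dg *_N \mc{U}^\dg)^* *_L \mc{Y}^\dg,\qquad \mc{X}=(\mc{U}^\dg)^**_N\mc{V},\ \mc{Y}=\mc{U}*_N(\mc{V}^\dg)^*,
\]
by establishing $\mathfrak{R}(\mc{X}^*)=\mathfrak{R}[(\mc{U}*_N\mc{V})^*]$ and $\mathfrak{R}(\mc{Y})=\mathfrak{R}(\mc{U}*_N\mc{V})$ via Lemma~\ref{IDR1}(c), then using Lemmas~\ref{ID}(b) and~\ref{IDR1}(b) to collapse $\mc{X}^\dg\mc{X}$ and $\mc{Y}\mc{Y}^\dg$ into $(\mc{U}*_N\mc{V})^\dg(\mc{U}*_N\mc{V})$ and $(\mc{U}*_N\mc{V})(\mc{U}*_N\mc{V})^\dg$. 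With that unweighted identity in hand, the paper simply replaces $\mc{U}\mapsto\mc{M}^{1/2}*_M\mc{U}$ and $\mc{V}\mapsto\mc{V}*_L\mc{N}^{-1/2}$, applies Lemma~\ref{42}(c),(d) to rewrite the one-sided weighted daggers, and reads off the claimed formula from Eq.~\eqref{2.17}.

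Your reshape-and-cite approach is shorter if the matrix identity is treated as known, and it makes transparent that nothing genuinely tensorial is happening. The paper's approach, by contrast, is self-contained within the tensor framework it has been building and avoids having to verify the three compatibility claims you list (in particular the commutation of $rsh$ with the square-root construction and with weighted daggers, which is true but not stated anywhere in the paper). Your fallback---checking the four axioms of Definition~\ref{43} via the range lemmas---is in substance the same computation the paper performs, just carried out after reshaping rather than before; if you take that route you will end up reproducing the paper's argument at the matrix level. One caution: the specific three-factor matrix formula you would be citing is not quite the textbook Greville or Cline identity, so if you plan to invoke it you should give a reference rather than call it ``classical''.
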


\begin{proof}
 Let $\mc{X} =(\mc{U}^\dag)^* *_N\mc{V}$ and $\mc{Y}=\mc{U}*_N(\mc{V}^\dag)^*$.
From Lemma~\ref{IDR1}(c) we get 
\begin{equation*}
\mathfrak{R}(\mc{X}^*) 
=  \mathfrak{R}[(\mc{U}*_N\mc{V})^*]\textnormal{~~and~~}  \mathfrak{R}(\mc{Y})
 = \mathfrak{R}(\mc{U}*_N\mc{V}).
\end{equation*}
Now, using Lemma~\ref{IDR1}(b) and Lemma~\ref{ID}(b) along with the fact $(\mc{V}^\dag)^*= \mc{V}*_L\mc{V}^\dag *_N(\mc{V}^\dag)^*$,  we obtain,  
\begin{eqnarray*}
\mc{X}^\dag *_M(\mc{V}^\dag *_N\mc{U}^\dag)^* *_L\mc{Y}^\dag 
&=& \mc{X}^\dag *_M(\mc{U}^\dag)^* \n\mc{V}*_L\mc{V}^\dag *_N(\mc{V}^\dag)^* *_L\mc{Y}^\dg \\
&=& \mc{X}^\dag *_M\mc{X}*_L\mc{V}^\dag *_N(\mc{V}^\dag)^* *_L\mc{Y}^\dag\\
    &=& (\mc{U}*_N\mc{V})^\dag *_M\mc{Y}*_L\mc{Y}^\dag 
    =(\mc{U}*_N\mc{V})^\dag.
    \end{eqnarray*}
    Replacing $ \mc{U}$ and $\mc{V} $ by $ \mc{M}^{1/2}\m\mc{U} $  and $\mc{V}*_L \mc{N}^{-1/2}  $  respectively on the above result, we get
\begin{eqnarray*}
[(\mc{M}^{1/2} *_M &&\hspace{-.3cm}\mc{U})*_N(\mc{V}*_L\mc{N}^{-1/2})]^{\dg}\\
&=&\{[(\mc{M}^{1/2}*_M\mc{U})^{\dg}]^{*}*_N\mc{V}*_L\mc{N}^{-1/2}\}^{\dg}*_M[(\mc{M}^{1/2}*_M\mc{U})^{\dg}]^{*}*_N[(\mc{V}*_L\mc{N}^{-1/2})^{\dg}]^{*}*_L\\
&&\hspace{2cm}\{\mc{M}^{1/2}*_M\mc{U}\n[(\mc{V}*_L\mc{N}^{-1/2})^{\dg}]^{*}\}^{\dg}\\
&=&[\mc{M}^{-1/2}*_M(\mc{U}_{\mc{M},\mc{I}_N}^{\dg})^{*}*_N\mc{V}*_L\mc{N}^{-1/2}]^{\dg}*_M\mc{M}^{-1/2}*_M(\mc{U}_{\mc{M},\mc{I}_N}^{\dg})^{*}*_N
(\mc{V}_{\mc{I}_N,\mc{N}}^{\dg})^{*}*_L
\\
&&\hspace{2cm}
\mc{N}^{1/2}*_L[\mc{M}^{1/2}*_M\mc{U}*_N(\mc{V}_{\mc{I}_N,\mc{N}}^{\dg})^{*}*_L\mc{N}^{1/2}]^{\dg}.
\end{eqnarray*}
Substituting the above result in Eq.\eqref{2.17} we get the desired result. 
\end{proof}

Further, in connection with range space of arbitrary-order tensor, the following Theorem collects some useful identities of weighted Moore-Penrose inverses.

\begin{theorem}\label{A1}
Let $ \mc{U}\in \mathbb{C}^{I_1\times\cdots\times I_M \times J_1
\times\cdots\times J_N },~~\mc{V}\in \mathbb{C}^{J_1\times\cdots\times J_N \times K_1
\times\cdots\times K_L }$ and~ $\mc{W}\in \mathbb{C}^{K_1\times\cdots\times K_L \times H_1
\times\cdots\times H_R }$. If $ \mc{A} = \mc{U}\n\mc{V}\1\mc{W}$, where  $ \mc{M}  \in \mathbb{C}^{I_1\times\cdots\times I_M \times I_1 \times\cdots\times I_M} $ and $ \mc{N} \in \mathbb{C}^{H_1\times\cdots\times H_R \times H_1 \times\cdots\times H_R} $ are Hermitian positive definite tensors. Then 
\begin{enumerate}
    \item [(a)] $\mc{A}^\dag_{\mc{M},\mc{N}} = \mc{X}^\dg_{\mc{I}_N,\mc{N}}\n\mc{V}\1 \mc{Y}^\dg_{\mc{M},\mc{I}_L} $, where $\mc{X} = (\mc{U}\n\mc{V}\1\mc{V}^\dg)^\dg\m\mc{A} $ and $ \mc{Y} = \mc{A}\f(\mc{V}^\dg\n\mc{V}\1\mc{W})^\dg $;
    \item[(b)] $\mc{A}^\dag_{\mc{M},\mc{N}} = \mc{X}^\dg_{\mc{I}_L,\mc{N}}\1\mc{V}^*\n\mc{V} \1 \mc{V}^*\n \mc{Y}^\dg_{\mc{M},\mc{I}_N} $, where $ \mc{X} = [\mc{U}\n(\mc{V}^\dg)^*]^\dg\m\mc{A} $ and $ \mc{Y} = \mc{A}\f[(\mc{V}^\dg)^*\1\mc{W}]^\dg $.
    \end{enumerate}
\end{theorem}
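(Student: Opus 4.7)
The plan is to reduce Theorem~\ref{A1} to a matrix identity via the reshape isomorphism, and then verify that identity by checking the four weighted Penrose equations. By the homomorphism property in \eqref{66}, every Einstein product $*_N$, $*_L$, $*_R$ appearing in the statement corresponds to an ordinary matrix product after applying $rsh$. The Moore-Penrose inverse commutes with $rsh$, because its four defining equations are preserved by the bijection $rsh$ and Theorem~\ref{MPIunique} ensures uniqueness. Moreover, since Hermitian square roots also commute with $rsh$, the formula in Theorem~\ref{GWMPI} shows that the weighted Moore-Penrose inverse likewise commutes with $rsh$. Therefore it suffices to prove the matrix analog
\begin{equation*}
(UVW)^\dg_{M,N} \;=\; X^\dg_{I,N}\, V\, Y^\dg_{M,I}, \qquad X = (UVV^\dg)^\dg UVW, \quad Y = UVW(V^\dg VW)^\dg,
\end{equation*}
for matrices $U,V,W$ of compatible sizes and Hermitian positive definite $M,N$.

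Next, I would absorb the weights. Using $A^\dg_{M,N}=N^{-1/2}(M^{1/2}AN^{-1/2})^\dg M^{1/2}$ together with the one-sided formulas from Corollary~\ref{3.4} for $X^\dg_{I,N}$ and $Y^\dg_{M,I}$, the identity rewrites as the unweighted statement $(\tilde U V\tilde W)^\dg = \tilde X^\dg V \tilde Y^\dg$ with $\tilde U=M^{1/2}U$ and $\tilde W=WN^{-1/2}$, where $\tilde X,\tilde Y$ are the corresponding modified factors. Thus the weighted case reduces cleanly to the unweighted matrix case.

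For the unweighted case, I would verify the four Penrose equations for $Z=X^\dg V Y^\dg$ directly. The key range-space facts are $\mathfrak{R}(X)\subseteq\mathfrak{R}(V)$, so $VV^\dg X=X$, and $\mathfrak{R}(Y^*)\subseteq\mathfrak{R}(V^*)$, so $YV^\dg V=Y$. Both follow from Lemma~\ref{ID} and Lemma~\ref{IDR1} applied to the definitions of $X,Y$, combined with $VV^\dg V=V$. These force the projections to telescope, yielding $UVW = XX^\dg(UVW) = (UVW)Y^\dg Y$, after which each of the four Penrose equations reduces to manipulations of the orthogonal projectors $XX^\dg$, $X^\dg X$, $YY^\dg$, $Y^\dg Y$.

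The main obstacle will be part (b), where the auxiliary tensors involve $(\mc{V}^\dg)^*$ in place of $\mc{V}^\dg$. Here I would invoke Lemma~\ref{ID}(c), which gives $\mathfrak{R}((\mc{V}^\dg)^*)=\mathfrak{R}(\mc{V})$ and $\mathfrak{R}(\mc{V}^\dg)=\mathfrak{R}(\mc{V}^*)$; these identifications show that the objects in (b) play the same structural roles with respect to the factorization $\mc{A}=\mc{U}*_N\mc{V}*_L\mc{W}$ as those in (a). After this translation, (b) follows from the same underlying matrix identity with only cosmetic index adjustments. The bookkeeping of which subscript is contracted in each Einstein product, and the careful tracking of the two projections $\mc{V}*_L\mc{V}^\dg$ and $\mc{V}^\dg *_N\mc{V}$, are the only technical points.
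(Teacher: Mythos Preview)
Your reduction to matrices via $rsh$ and the absorption of the weights by passing to $\tilde U=M^{1/2}U$, $\tilde W=WN^{-1/2}$ is sound and matches the paper's strategy. The problem is in your plan for the unweighted core.

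Your claimed telescoping $UVW = XX^\dg(UVW) = (UVW)Y^\dg Y$ is not even well-typed. With $U\colon m\times n$, $V\colon n\times k$, $W\colon k\times h$, the tensor $X=(UVV^\dg)^\dg UVW$ is $n\times h$, so $XX^\dg$ is $n\times n$ and cannot act on $UVW\in\mathbb C^{m\times h}$ from the left; likewise $Y^\dg Y$ is $k\times k$ and cannot act on $UVW$ from the right. So the Penrose-verification route, as you have sketched it, does not get off the ground. The two correct preliminary facts you wrote down, $VV^\dg X=X$ and $YV^\dg V=Y$, are fine but do not by themselves produce a relation between $A=UVW$ and the projectors $XX^\dg$, $Y^\dg Y$.

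The paper avoids Penrose verification entirely and instead proves the \emph{equality} $X^\dg V Y^\dg = A^\dg$ directly. The crucial range facts are equalities, not just inclusions: $\mathfrak R(X^*)=\mathfrak R(A^*)$ and $\mathfrak R(Y)=\mathfrak R(A)$, obtained from Eq.~\eqref{lemma37}. By Lemma~\ref{IDR1}(b) and Lemma~\ref{ID}(b) these give $X^\dg X=A^\dg A$ and $YY^\dg=AA^\dg$. Separately, Lemma~\ref{ID}(c) yields $\mathfrak R((X^\dg)^*)\subseteq\mathfrak R((UVV^\dg)^*)$ and $\mathfrak R(Y^\dg)\subseteq\mathfrak R(V^\dg VW)$, which lets one insert the projectors $(UVV^\dg)^\dg(UVV^\dg)$ and $(V^\dg VW)(V^\dg VW)^\dg$ into $X^\dg V Y^\dg$. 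Since $(UVV^\dg)V(V^\dg VW)=UVW=A$, the expression collapses to $X^\dg X\,(V^\dg VW)^\dg Y^\dg=A^\dg A(V^\dg VW)^\dg Y^\dg=A^\dg Y Y^\dg=A^\dg$. Only after this unweighted identity is in hand does one substitute $U\mapsto M^{1/2}U$, $W\mapsto WN^{-1/2}$ and invoke Lemma~\ref{42}(a),(b) to identify the resulting factors with $X^\dg_{\mc I_N,\mc N}$ and $Y^\dg_{\mc M,\mc I_L}$.

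For part (b) the paper runs an analogous but genuinely separate computation; the extra $\mc V^*\n\mc V\1\mc V^*$ in the middle is not a cosmetic artifact, and you will need the same two range \emph{equalities} (now for the new $X,Y$) together with the inclusion $\mathfrak R((X^\dg)^*)\subseteq\mathfrak R((U(V^\dg)^*)^*)$ to make the insertion work.
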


\begin{proof}
(a) From Eq.\eqref{2.17} we have
\begin{equation*}
 \mc{A}^\dg _{\mc{M},\mc{N}}  
 =  \mc{N}^{-1/2}\f[\mc{U}_1\n\mc{V}\1\mc{W}_1]^\dg \m \mc{M}^{1/2}, 
\end{equation*}
where $\mc{U}_1 = \mc{M}^{1/2}\m\mc{U} $ and $ \mc{W}_1 = \mc{W}\f\mc{N}^{-1/2} $.
On the other hand, by Eq. \eqref{lemma37}, we have
\begin{equation*}
\mathfrak{R}(\mc{X}^*) = \mathfrak{R}[(\mc{V}\1\mc{W})^*\n(\mc{U}\n\mc{V}\1\mc{V}^\dg)^*\m\{(\mc{U}\n\mc{V}\1\mc{V}^\dg)^*\}^\dg]  = \mathfrak{R}(\mc{A}^*) \textnormal{~~and~~} \mathfrak{R}(\mc{Y}) = \mathfrak{R}(\mc{A}).
\end{equation*}
Also by Lemma \ref{ID}(c), we get
\begin{eqnarray*}
 \mathfrak{R}[(\mc{X}^\dg)^*] 
 \subseteq \mathfrak{R}[(\mc{U}\n\mc{V}\1\mc{V}^\dg)^\dg] = \mathfrak{R}[(\mc{U}\n\mc{V}\1\mc{V}^\dg)^*]~~\textnormal{and~~}
\mathfrak{R}(\mc{Y}^\dg) 
\subseteq \mathfrak{R}(\mc{V}^\dg\n\mc{V}\1\mc{W}).
\end{eqnarray*}
Thus, by using Lemma~\ref{ID}[(a),(b)] and Lemma~\ref{IDR1}[(a),(b)], we have
\begin{eqnarray}\label{3.10}
\mc{X}^\dg \n \mc{V}\1 \mc{Y}^\dg \nonumber
&=& \mc{X}^\dg\n(\mc{U}\n\mc{V}\1\mc{V}^\dg)^\dg\m(\mc{U}\n\mc{V}\1\mc{V}^\dg)\n \mc{V}\1 (\mc{V}^\dg\n\mc{V}\1\mc{W})\f \\ \nonumber
&&\hspace{3cm}(\mc{V}^\dg\n\mc{V}\1\mc{W})^\dg \n \mc{Y}^\dg\\
&=& \mc{A}^\dg\m\mc{Y}\n\mc{Y}^\dg
= \mc{A}^\dg. 
\end{eqnarray}
Replacing $\mc{U}$ by $\mc{U}_1$ and $\mc{W}$ by $ \mc{W}_1$ in Eq.\eqref{3.10} and then using Lemma~\ref{42}[(a),(b)], we get 
\begin{eqnarray*}
\mc{A}^\dg _{\mc{M},\mc{N}}
&=& \mc{N}^{-1/2}\f[(\mc{U}_1\n\mc{V}\1\mc{V}^\dg)^\dg\m\mc{M}^{1/2}\m \mc{A}\f\mc{N}^{-1/2}]^\dg \n\mc{V}\1\\
&&\hspace{4cm}[\mc{M}^{1/2}\m \mc{A}\f \mc{N}^{-1/2}\f(\mc{V}^\dg\n\mc{V}\1\mc{W}_1)^\dg]^\dg \m \mc{M}^{1/2}\\
&=&\mc{N}^{-1/2}\f[(\mc{U}\n\mc{V}\1\mc{V}^\dg)^\dg\m \mc{A}\f\mc{N}^{-1/2}]^\dg \n\mc{V}\1[\mc{M}^{1/2}\m \mc{A}\f(\mc{V}^\dg\n\mc{V}\1\mc{W})^\dg]^\dg \m \mc{M}^{1/2} \\
&=&  \mc{X}^\dg_{\mc{I}_N,\mc{N}}\n\mc{V}\1 \mc{Y}^\dg_{\mc{M},\mc{I}_L}.
\end{eqnarray*}

(b) Following the Lemma~\ref{revA}(b) and Eq. \eqref{lemma37}, we get \\
$\mathfrak{R}(\mc{A})= \mathfrak{R}(\mc{Y})$ and 
$\mathfrak{R}(\mc{X}^*) = \mathfrak{R}[(\mc{V}^* \n \mc{V}*_L\mc{W})^* *_L(\mc{U}*_N(\mc{V}^\dag)^*)^* *_M\{( \mc{U}*_N(\mc{V}^\dag)^*)^*\}^\dag] = \mathfrak{R}(\mc{A}^*)$.
Also by using Lemma \ref{ID}(c)
\begin{eqnarray*}
\mathfrak{R}[(\mc{X}^\dag)^*] 
\subseteq \mathfrak{R}[\{ \mc{U}*_N(\mc{V}^\dag)^*\}^*] \textnormal{~~~and~~~}
 \mathfrak{R}(\mc{Y}^\dag)
\subseteq \mathfrak{R}[(\mc{V}^\dag)^* *_L\mc{W}].
\end{eqnarray*}
Using Lemma \ref{ID}[(a),(b)] and Lemma \ref{IDR1}[(a),(b)], we obtain
\begin{eqnarray}\nonumber
 \mc{X}^\dg\1\mc{V}^*\n\mc{V} \1 \mc{V}^*\n \mc{Y}^\dg
&=& \mc{X}^\dag *_L[\mc{U}*_N(\mc{V}^\dag)^*]^\dag *_M[\mc{U}*_N(\mc{V}^\dag)^*]\1\mc{V}^*\n\mc{V} \1 \mc{V}^*\n\\\nonumber
&&\hspace{4cm}[(\mc{V}^\dag)^* *_L\mc{W}]*_R[(\mc{V}^\dag)^* *_L\mc{W}]^\dag *_N\mc{Y}^\dag \\\nonumber
&=& \mc{X}^\dag *_L[\mc{U}*_N(\mc{V}^\dag)^*]^\dag *_M \mc{A}\f [(\mc{V}^\dag)^* *_L\mc{W}]^\dag *_N \mc{Y}^\dag \\ \label{eq282}
&=& \mc{A}^\dg \m\mc{A}\f[(\mc{V}^\dag)^* *_L\mc{W}]^\dag *_N\mc{Y}^\dg
=\mc{A}^\dg. 
\end{eqnarray}
Let $ \mc{U}_1 = \mc{M}^{1/2}\m \mc{U},~~ \mc{W}_1 = \mc{W}\f \mc{N}^{-1/2} $ and $ \mc{A}_1 = \mc{U}_1\n \mc{V}\1\mc{W}_1 $. 
Then using Eq. \eqref{eq282} and the Lemma~\ref{42} [(a),(b)], we can write
\begin{eqnarray*}
\mc{A}_1 ^\dg 
&=& [\{\mc{U}_1\n(\mc{V}^\dg)^*\}^\dg\m\mc{A}_1]^\dg \1 \mc{V}^*\n\mc{V} \1 \mc{V}^*\n[\mc{A}_1\f\{(\mc{V}^\dg)^*\1\mc{W}_1\}^\dg]^\dg\\
&=& [\{\mc{U}\n(\mc{V}^\dg)^*\}^\dg\m\mc{A}\f \mc{N}^{-1/2} ]^\dg \1 \mc{V}^*\n\mc{V} \1 \mc{V}^*\n[\mc{M}^{1/2}\m \mc{A}\f\{(\mc{V}^\dg)^*\1\mc{W}\}^\dg]^\dg.
\end{eqnarray*}
Therefore, $ \mc{A}^\dg_{\mc{M},\mc{N}} = \mc{N}^{-1/2} \f \mc{A}_1 ^\dg \m \mc{M}^{1/2} = \mc{X}^\dg_{\mc{I}_L,\mc{N}}\1\mc{V}^*\n\mc{V} \1 \mc{V}^*\n \mc{Y}^\dg_{\mc{M},\mc{I}_N}  $.
\end{proof}

\begin{theorem}
Let $\mc{U}\in \mathbb{C}^{I_1\times\cdots\times I_M \times J_1
\times\cdots\times J_N }$, $ \mc{V}\in \mathbb{C}^{J_1\times\cdots\times J_N \times K_1
\times\cdots\times K_L }$ and $ \mc{W}\in \mathbb{C}^{K_1\times\cdots\times K_L \times H_1
\times\cdots\times H_R} $.  Also let $ \mc{M}  \in \mathbb{C}^{I_1\times\cdots\times I_M \times I_1 \times\cdots\times I_M} $ and $ \mc{N} \in \mathbb{C}^{H_1\times\cdots\times H_R \times H_1 \times\cdots\times H_R} $ be a pair of Hermitian positive definite tensors. Then 
\begin{enumerate}
     \item [(a)] $  (\mc{U}*_N\mc{V}*_L\mc{W})_{\mc{M},\mc{N}}^{\dg} = [(\mc{U}_{\mc{M},\mc{I}_N}^{\dg})^{*}*_N\mc{V}*_L\mc{W}]_{\mc{M}^{-1},\mc{N}}^{\dg}*_M(\mc{U}_{\mc{M},\mc{I}_N}^{\dg})^{*}*_N\mc{V}*_L(\mc{W}_{\mc{I}_L,\mc{N}}^{\dg})^{*}\f[\mc{U}*_N\mc{V}*_L(\mc{W}_{\mc{I}_L,\mc{N}}^{\dg})^{*}]_{\mc{M},\mc{N}^{-1}}^{\dg}  $;
    \item[(b)]  $ (\mc{U}*_N\mc{V}*_L\mc{W})_{\mc{M},\mc{N}}^{\dg} 
 = [\{(\mc{U}*_N\mc{V})_{\mc{M},\mc{I}_L}^{\dg}\}^{*}*_L\mc{W}]_{\mc{M}^{-1},\mc{N}}^{\dg}*_M[(\mc{U}*_N\mc{V})_{\mc{M},\mc{I}_L}^{\dg}]^{*}*_L\mc{V}^{\dg}*_N[(\mc{V}*_L\mc{W})_{\mc{I}_N,\mc{N}}^{\dg}]^{*}*_R[\mc{U}*_N\{(\mc{V}*_L\mc{W})_{\mc{I}_N,\mc{N}}^{\dg}\}^{*}]_{\mc{M},\mc{N}^{-1}}^{\dg} $.
\end{enumerate}

\end{theorem}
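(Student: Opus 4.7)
My approach for both parts is the two-stage strategy already used in Theorems~\ref{ABThe45} and~\ref{A1}: I will first establish the \emph{unweighted} identity (setting $\mc{M}=\mc{I}_M$, $\mc{N}=\mc{I}_R$), and then lift it to the weighted case by substituting $\mc{U}\mapsto\mc{U}_1=\mc{M}^{1/2}\m\mc{U}$ and $\mc{W}\mapsto\mc{W}_1=\mc{W}\f\mc{N}^{-1/2}$. To translate back, I will use Corollary~\ref{3.4} to express $(\mc{U}_1^\dg)^*=\mc{M}^{-1/2}\m(\mc{U}_{\mc{M},\mc{I}_N}^\dg)^*$ and $(\mc{W}_1^\dg)^*=(\mc{W}_{\mc{I}_L,\mc{N}}^\dg)^*\f\mc{N}^{1/2}$ (and, for~(b), similar identities for $((\mc{U}_1\n\mc{V})^\dg)^*$ and $((\mc{V}\1\mc{W}_1)^\dg)^*$), and then apply Eq.~\eqref{2.17} to convert $\mc{A}_1^\dg$ back into $\mc{A}^\dg_{\mc{M},\mc{N}}$. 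The unusual weight pairs $(\mc{M}^{-1},\mc{N})$ and $(\mc{M},\mc{N}^{-1})$ appearing in the claimed formulas are precisely the ones for which the intermediate factors $\mc{X}_1=\mc{M}^{-1/2}\m[\cdots]\f\mc{N}^{-1/2}$ and $\mc{Y}_1=\mc{M}^{1/2}\m[\cdots]\f\mc{N}^{1/2}$ fit Eq.~\eqref{2.17}.

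\textbf{Unweighted (a).} I will set $\mc{X}=(\mc{U}^\dg)^*\n\mc{V}\1\mc{W}$ and $\mc{Y}=\mc{U}\n\mc{V}\1(\mc{W}^\dg)^*$. Lemma~\ref{IDR1}(c) applied to $\mc{X}^*=\mc{W}^*\1\mc{V}^*\n\mc{U}^\dg$ gives $\mathfrak{R}(\mc{X}^*)=\mathfrak{R}(\mc{A}^*)$, and the identity $(\mc{W}^\dg)^*=\mc{W}\f\mc{W}^\dg\1(\mc{W}^\dg)^*$ (a direct consequence of Lemma~\ref{revA}(b)) combined with Lemma~\ref{IDR1}(c) gives $\mathfrak{R}(\mc{Y})=\mathfrak{R}(\mc{A})$. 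By Lemmas~\ref{IDR1}(b) and~\ref{ID}(b), these translate into $\mc{X}^\dg\m\mc{X}=\mc{A}^\dg\m\mc{A}$ and $\mc{Y}\f\mc{Y}^\dg=\mc{A}\f\mc{A}^\dg$. Using the same $(\mc{W}^\dg)^*$-identity twice, the middle factor rewrites as $(\mc{U}^\dg)^*\n\mc{V}\1(\mc{W}^\dg)^*=\mc{X}\f\mc{W}^\dg\1(\mc{W}^\dg)^*$ while $\mc{A}\f\mc{W}^\dg\1(\mc{W}^\dg)^*=\mc{Y}$, so the candidate collapses:
\[\mc{X}^\dg\m\mc{X}\f\mc{W}^\dg\1(\mc{W}^\dg)^*\f\mc{Y}^\dg=\mc{A}^\dg\m\mc{A}\f\mc{W}^\dg\1(\mc{W}^\dg)^*\f\mc{Y}^\dg=\mc{A}^\dg\m\mc{Y}\f\mc{Y}^\dg=\mc{A}^\dg\m\mc{A}\f\mc{A}^\dg=\mc{A}^\dg.\]

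\textbf{Unweighted (b) and main obstacle.} Writing $\mc{P}=\mc{U}\n\mc{V}$ and $\mc{Q}=\mc{V}\1\mc{W}$, I will instead take $\mc{X}=(\mc{P}^\dg)^*\1\mc{W}$ and $\mc{Y}=\mc{U}\n(\mc{Q}^\dg)^*$; the same range equalities and hence $\mc{X}^\dg\m\mc{X}=\mc{A}^\dg\m\mc{A}$, $\mc{Y}\f\mc{Y}^\dg=\mc{A}\f\mc{A}^\dg$ follow by the same recipe. The technical heart of the argument, which I expect to be the main obstacle, is the pair of projection identities
\[(\mc{P}^\dg)^*\1(\mc{V}^\dg\n\mc{V})=(\mc{P}^\dg)^*,\qquad(\mc{V}\1\mc{V}^\dg)\n(\mc{Q}^\dg)^*=(\mc{Q}^\dg)^*.\]
Both follow from the range inclusions $\mathfrak{R}(\mc{P}^\dg)=\mathfrak{R}(\mc{V}^*\n\mc{U}^*)\subseteq\mathfrak{R}(\mc{V}^*)$ and $\mathfrak{R}((\mc{Q}^\dg)^*)=\mathfrak{R}(\mc{Q})=\mathfrak{R}(\mc{V}\1\mc{W})\subseteq\mathfrak{R}(\mc{V})$ (using Lemma~\ref{ID}(c)) combined with the fact that $\mc{V}^\dg\n\mc{V}$ and $\mc{V}\1\mc{V}^\dg$ are Hermitian projectors onto $\mathfrak{R}(\mc{V}^*)$ and $\mathfrak{R}(\mc{V})$ respectively. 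Together with $(\mc{Q}^\dg)^*=\mc{Q}\f\mc{Q}^\dg\n(\mc{Q}^\dg)^*$ (Lemma~\ref{revA}(b)) they let me simplify
\[(\mc{P}^\dg)^*\1\mc{V}^\dg\n(\mc{Q}^\dg)^*=(\mc{P}^\dg)^*\1(\mc{V}^\dg\n\mc{V})\1\mc{W}\f\mc{Q}^\dg\n(\mc{Q}^\dg)^*=\mc{X}\f\mc{Q}^\dg\n(\mc{Q}^\dg)^*,\]
and similarly $\mc{A}\f\mc{Q}^\dg\n(\mc{Q}^\dg)^*=\mc{U}\n(\mc{Q}\f\mc{Q}^\dg)\n(\mc{Q}^\dg)^*=\mc{U}\n(\mc{Q}^\dg)^*=\mc{Y}$. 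The candidate again collapses to $\mc{A}^\dg\m\mc{A}\f\mc{A}^\dg=\mc{A}^\dg$.

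\textbf{Lifting to the weighted case.} With the two unweighted identities in hand, I will substitute $\mc{U}\mapsto\mc{U}_1$ and $\mc{W}\mapsto\mc{W}_1$ in each. Using the Corollary~\ref{3.4} translations above (together with the fact that $\mc{M}^{1/2}$ and $\mc{N}^{1/2}$ are Hermitian), both outer auxiliary tensors take the sandwiched forms $\mc{X}_1=\mc{M}^{-1/2}\m[\,\cdot\,]\f\mc{N}^{-1/2}$ and $\mc{Y}_1=\mc{M}^{1/2}\m[\,\cdot\,]\f\mc{N}^{1/2}$ in which the bracketed tensors are exactly the arguments of $[\,\cdot\,]_{\mc{M}^{-1},\mc{N}}^\dg$ and $[\,\cdot\,]_{\mc{M},\mc{N}^{-1}}^\dg$ appearing in the target formulas; Eq.~\eqref{2.17} then gives $\mc{X}_1^\dg=\mc{N}^{1/2}\f[\,\cdot\,]_{\mc{M}^{-1},\mc{N}}^\dg\m\mc{M}^{1/2}$ and $\mc{Y}_1^\dg=\mc{N}^{-1/2}\f[\,\cdot\,]_{\mc{M},\mc{N}^{-1}}^\dg\m\mc{M}^{-1/2}$. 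Finally, forming $\mc{A}^\dg_{\mc{M},\mc{N}}=\mc{N}^{-1/2}\f\mc{A}_1^\dg\m\mc{M}^{1/2}$ and multiplying everything out, every pair $\mc{M}^{1/2}\m\mc{M}^{-1/2}$ and $\mc{N}^{-1/2}\f\mc{N}^{1/2}$ cancels, reproducing the stated formulas of~(a) and~(b). Apart from the projection step in~(b), the remaining work is routine dimensional bookkeeping parallel to the proof of Theorem~\ref{ABThe45}.
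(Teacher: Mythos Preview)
Your proposal is correct and follows essentially the same approach as the paper's own proof: the same auxiliary tensors $\mc{X},\mc{Y}$, the same range identities via Lemma~\ref{IDR1}(c) and Lemma~\ref{ID}(c), the same collapse of the middle factor using Lemma~\ref{revA}, and the same lifting by substituting $\mc{U}\mapsto\mc{M}^{1/2}\m\mc{U}$, $\mc{W}\mapsto\mc{W}\f\mc{N}^{-1/2}$ and invoking Corollary~\ref{3.4}/Lemma~\ref{42} together with Eq.~\eqref{2.17}. The only cosmetic difference is that the paper cites Lemma~\ref{42}(c),(d) where you cite Corollary~\ref{3.4}, and in~(b) the paper uses only the first of your two projection identities (the second one you list is not actually needed in your displayed computation either).
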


\begin{proof}
(a) Let $ \mc{A} = \mc{U}\n\mc{V}\1\mc{W}$,
$ \mc{X} = (\mc{U}^\dag)^*\n \mc{V}\1\mc{W} $ and $ \mc{Y}= \mc{U}\n\mc{V}\1(\mc{W}^\dag)^* $.\\
Using Lemma \ref{IDR1}(c), we get
\begin{eqnarray*}
\mathfrak{R}(\mc{X}^*) 
= \mathfrak{R}(\mc{A}^*) \textnormal{~~~and~~~} 
 \mathfrak{R}(\mc{Y}) 
 = \mathfrak{R}(\mc{A}).
\end{eqnarray*}
Further, using the fact that $\mc{W}^\dag =\mc{W}^\dag*_N(\mc{W}^\dag)^* *_N\mc{W}^*$,and Lemma \ref{ID}(b) and Lemma \ref{IDR1}(b), we can write,
\begin{eqnarray*}
\mc{X}^\dag*_M(\mc{U}^\dag)^* *_N\mc{V}*_L(\mc{W}^\dag)^* *_R\mc{Y}^\dag 
 &=& \mc{X}^\dag *_M(\mc{U}^\dag)^* *_N\mc{V}*_L\mc{W}*_R\mc{W}^\dag *_L(\mc{W}^\dag)^* *_R\mc{Y}^\dag\\
 &=& 
 \mc{A}^\dag \m \mc{Y} \f \mc{Y}^\dag
 =\mc{A}^\dag.
\end{eqnarray*}

Using the above result to  
$[({\mc{M}^{1/2} *_M\mc{U})\n\mc{V}\1(\mc{W}\f\mc{N}^{-1/2})}]^\dg $ and following the Lemma~\ref{42}[(c),(d)] we get,
\begin{eqnarray*}
({\mc{U}*_N\mc{V}*_L\mc{W}})_{\mc{M},\mc{N}}^\dg 
&=& \mc{N}^{-1/2}\f \{[(\mc{M}^{1/2}*_M\mc{U})^{\dg}]^{*}*_N\mc{V}*_L\mc{W}*_R\mc{N}^{-1/2}\}^{\dg}*_M[(\mc{M}^{1/2}*_M\mc{U})^{\dg}]^{*}*_N \mc{V} *_L \hspace{2cm} \\
&& [(\mc{W}*_R\mc{N}^{-1/2})^{\dg}]^{*} *_R[\mc{M}^{1/2} *_M\mc{U} *_N\mc{V}*_L
\{(\mc{W}*_R\mc{N}^{-1/2})^\dg\}^*]^\dg \m \mc{M}^{1/2} \\
&=& \mc{N}^{-1/2}\f [\mc{M}^{-1/2}*_M(\mc{U}_{\mc{M},\mc{I}_N}^{\dg})^{*}*_N\mc{V}*_L\mc{W}*_R\mc{N}^{-1/2}]^{\dg}*_M{\mc{M}}^{-1/2}*_M(\mc{U}_{\mc{M},\mc{I}_N}^\dag)^**_N \\
&& \mc{V}*_L(\mc{W}_{\mc{I}_L,\mc{N}}^{\dg})^{*}*_R{\mc{N}^{1/2}}*_R[\mc{M}^{1/2}*_M\mc{U}*_N\mc{V}*_L(\mc{W}_{\mc{I}_L,\mc{N}}^{\dg})^{*}*_R\mc{N}^{1/2}]^{\dg} \m \mc{M}^{1/2}\\
&=& [(\mc{U}_{\mc{M},\mc{I}_N}^{\dg})^{*}*_N\mc{V}*_L\mc{W}]_{\mc{M}^{-1},\mc{N}}^{\dg}*_M(\mc{U}_{\mc{M},\mc{I}_N}^{\dg})^{*}*_N\mc{V}*_L
(\mc{W}_{\mc{I}_L,\mc{N}}^{\dg})^{*}\f \\
&& [\mc{U}*_N\mc{V}*_L(\mc{W}_{\mc{I}_L,\mc{N}}^{\dg})^{*}]_{\mc{M},\mc{N}^{-1}}^{\dg}.
\end{eqnarray*}


(b) Let $ \mc{A} = \mc{U}\n\mc{V}\1\mc{W}$, $ \mc{X} = [(\mc{U}*_N\mc{V})^\dag]^* *_L\mc{W} $  and $ \mc{Y} = \mc{U}*_N[(\mc{V}*_L\mc{W})^\dag]^* $. 
Using Lemma\ref{IDR1}(c), we get  \begin{eqnarray*}
&&\mathfrak{R}(\mc{X}^*) 
= \mathfrak{R}(\mc{A}^*),~~~
\mathfrak{R}(\mc{Y}) = \mathfrak{R}(\mc{A})
\textnormal{~~and~~}  \\ 
&&\mathfrak{R}[\{(\mc{U}\n\mc{V})^\dg\}^*]^*  = \mathfrak{R}[(\mc{U}\n\mc{V})^\dg] = \mathfrak{R}[(\mc{U}\n \mc{V})^*] \subseteq \mathfrak{R}(\mc{V}^*).
\end{eqnarray*}
From Lemma~\ref{revA}(a), one can write $ [(\mc{V}\1\mc{W})^\dg]^* = (\mc{V}\1\mc{W})\f(\mc{V}\1\mc{W})^\dg\n[(\mc{V}\1\mc{W})^\dg]^* $.\\
Now using Lemma \ref{ID}(b) and Lemma \ref{IDR1}(b), we obtain
\begin{eqnarray*}
 \mc{X}^\dag \m & & \hspace{-.3cm} [(\mc{U}\n \mc{V})^\dag]^* *_L\mc{V}^\dag *_N[(\mc{V}*_L\mc{W})^\dag]^* *_R\mc{Y}^\dag\\
   & = &\mc{X}^\dag *_M[(\mc{U}*_N\mc{V})^\dag]^* *_L\mc{V}^\dag *_N\mc{V}*_L\mc{W}*_R(\mc{V}*_L\mc{W})^\dag *_N[(\mc{V}*_L\mc{W})^\dag]^* \f \mc{Y}^\dag\\
& = & \mc{X}^\dag *_M\mc{X}*_R(\mc{V}*_L\mc{W})^\dag *_N[(\mc{V}*_L\mc{W})^\dag]^* *_R\mc{Y}^\dag\\    
& = & \mc{A}^\dag *_M\mc{U}*_N[(\mc{V}*_L\mc{W})^\dag]^* *_R\mc{Y}^\dag
 = \mc{A}^\dag.
\end{eqnarray*}
Replacing $ \mc{U}$ and $\mc{W} $ by $ \mc{M}^{1/2}\m\mc{U} $ and $\mc{W}\f \mc{N}^{-1/2} $ respectively on the above result, we have
\begin{eqnarray*}
& & \hspace{-1cm} [{(\mc{M}^{1/2}*_M\mc{U})*_N\mc{V}*_L (\mc{W}*_R\mc{N}^{-1/2})}]^\dag\\
&=&[\{(\mc{M}^{1/2}*_M\mc{U}*_N\mc{V})^\dag\}^* *_L\mc{W}*_R\mc{N}^{-1/2}]^\dag *_M[(\mc{M}^{1/2}*_M\mc{U}*_N\mc{V})^\dag]^* *_L\mc{V}^\dag\\  
& & \hspace{3cm} *_N[(\mc{V}*_L\mc{W}*_R\mc{N}^{-1/2})^\dag]^*
*_R [\mc{M}^{1/2}*_M\mc{U}*_N\{(\mc{V}*_L\mc{W}*_R\mc{N}^{-1/2})^\dag\}^*]^\dag\\
&=&\{\mc{M}^{-1/2}\m[(\mc{U}*_N\mc{V})_{\mc{M},\mc{I}_L}^\dag]^* *_L\mc{W}*_R\mc{N}^{-1/2}\}^\dag *_M\mc{M}^{-1/2}*_M[(\mc{U}*_N\mc{V})_{\mc{M},\mc{I}_L}^\dag]^* *_L\mc{V}^\dag \\
& & \hspace{3cm} *_N[(\mc{V}*_L\mc{W})_{\mc{I}_N, \mc{N}}^\dag]^* *_R\mc{N}^{-1/2} *_R[\mc{M}^{1/2}*_M\mc{U}*_N\{(\mc{V}*_L\mc{W})_{\mc{I}_N,\mc{N}}^\dag\}^* *_R\mc{N}^{1/2}]^\dag .
\end{eqnarray*}
Substituting the above result in Eq.\eqref{2.17} one can get the desired result. 
\end{proof}

\begin{theorem}\label{uvw1}
Let $ \mc{U}\in \mathbb{C}^{I_1\times\cdots\times I_M \times J_1
\times\cdots\times J_N} $, $ \mc{V}\in \mathbb{C}^{J_1\times\cdots\times J_N \times K_1
\times\cdots\times K_L} $ and $ \mc{W} \in  \mathbb{C}^{K_1\times\cdots\times K_L \times H_1
\times\cdots\times H_R} $. If $ \mc{A} = \mc{U}\n\mc{V}\1\mc{W} $, and $\mc{M}  \in \mathbb{C}^{I_1\times\cdots\times I_M \times I_1 \times\cdots\times I_M} $, $\mc{N} \in \mathbb{C}^{H_1\times\cdots\times H_R \times H_1 \times\cdots\times H_R} $ be a pair of Hermitian positive definite tensors, Then 
\begin{equation*}
\mc{A}^\dag_{\mc{M},\mc{N}} = \mc{X}^\dg_{\mc{I}_N,\mc{N}}\n\mc{V}\1 \mc{Y}^\dg_{\mc{M},\mc{I}_L}, 
\end{equation*}
 where $\mc{X} = \mc{U}^\dg\m\mc{A} $ and $ \mc{Y} = \mc{A}\f\mc{W}^\dg $.
\end{theorem}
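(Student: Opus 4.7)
The plan is to mirror the strategy of Theorem~\ref{A1}(a): first establish the unweighted identity $\mc{A}^\dg = \mc{X}^\dg \n \mc{V} \1 \mc{Y}^\dg$ (the case $\mc{M}=\mc{I}_M$, $\mc{N}=\mc{I}_R$), and then promote it to the weighted statement via the substitution $\mc{U} \mapsto \mc{M}^{1/2}\m\mc{U}$, $\mc{W} \mapsto \mc{W}\f\mc{N}^{-1/2}$ already exploited in the derivation of \eqref{2.17}.

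The unweighted reduction rests on the two range identities $\mathfrak{R}(\mc{X}^*)=\mathfrak{R}(\mc{A}^*)$ and $\mathfrak{R}(\mc{Y})=\mathfrak{R}(\mc{A})$. Writing $\mc{X}=\mc{U}^\dg\m\mc{U}\n\mc{V}\1\mc{W}$, so that $\mc{X}^* = \mc{W}^*\1\mc{V}^*\n\mc{U}^*\m(\mc{U}^*)^\dg$, Eq.~\eqref{lemma37} yields $\mathfrak{R}(\mc{X}^*)=\mathfrak{R}(\mc{W}^*\1\mc{V}^*\n\mc{U}^*)=\mathfrak{R}(\mc{A}^*)$; the analogous manipulation applied to $\mc{Y}=\mc{U}\n\mc{V}\1\mc{W}\f\mc{W}^\dg$ gives $\mathfrak{R}(\mc{Y})=\mathfrak{R}(\mc{A})$. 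By Lemmas~\ref{ID}(b) and \ref{IDR1}(b) these translate into the projection equalities $\mc{X}^\dg\n\mc{X}=\mc{A}^\dg\m\mc{A}$ and $\mc{Y}\1\mc{Y}^\dg=\mc{A}\f\mc{A}^\dg$. Moreover, $\mathfrak{R}(\mc{X})\subseteq\mathfrak{R}(\mc{U}^\dg)=\mathfrak{R}(\mc{U}^*)$ and $\mathfrak{R}(\mc{Y}^*)\subseteq\mathfrak{R}((\mc{W}^\dg)^*)=\mathfrak{R}(\mc{W})$ (using Lemma~\ref{ID}(c)), so Lemmas~\ref{IDR1}(a) and \ref{ID}(a) justify inserting the two copies of the identity $\mc{U}^\dg\m\mc{U}$ and $\mc{W}\f\mc{W}^\dg$:
\begin{eqnarray*}
\mc{X}^\dg \n \mc{V} \1 \mc{Y}^\dg
&=& \mc{X}^\dg\n(\mc{U}^\dg\m\mc{U})\n\mc{V}\1(\mc{W}\f\mc{W}^\dg)\1\mc{Y}^\dg \\
&=& \mc{X}^\dg\n\mc{X}\f\mc{W}^\dg\1\mc{Y}^\dg
= \mc{A}^\dg\m\mc{A}\f\mc{W}^\dg\1\mc{Y}^\dg \\
&=& \mc{A}^\dg\m\mc{Y}\1\mc{Y}^\dg
= \mc{A}^\dg\m\mc{A}\f\mc{A}^\dg
= \mc{A}^\dg,
\end{eqnarray*}
the final equality being the second Moore--Penrose axiom of Definition~\ref{defmpi}.

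To upgrade to the weighted statement, put $\mc{U}_1=\mc{M}^{1/2}\m\mc{U}$, $\mc{W}_1=\mc{W}\f\mc{N}^{-1/2}$, and $\mc{A}_1=\mc{U}_1\n\mc{V}\1\mc{W}_1=\mc{M}^{1/2}\m\mc{A}\f\mc{N}^{-1/2}$. A short calculation using Lemma~\ref{42}(a),(b) reveals that the corresponding intermediates collapse to $\mc{X}_1 = \mc{U}_1^\dg\m\mc{A}_1 = \mc{X}\f\mc{N}^{-1/2}$ and $\mc{Y}_1 = \mc{A}_1\f\mc{W}_1^\dg = \mc{M}^{1/2}\m\mc{Y}$. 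Applying the unweighted identity above to $\mc{A}_1$ and combining with \eqref{2.17} and Corollary~\ref{3.4}(a),(b) now yields
\begin{eqnarray*}
\mc{A}^\dg_{\mc{M},\mc{N}}
&=& \mc{N}^{-1/2}\f\mc{A}_1^\dg\m\mc{M}^{1/2}
= \mc{N}^{-1/2}\f\bigl(\mc{X}_1^\dg\n\mc{V}\1\mc{Y}_1^\dg\bigr)\m\mc{M}^{1/2} \\
&=& \bigl[\mc{N}^{-1/2}\f(\mc{X}\f\mc{N}^{-1/2})^\dg\bigr]\n\mc{V}\1\bigl[(\mc{M}^{1/2}\m\mc{Y})^\dg\m\mc{M}^{1/2}\bigr] \\
&=& \mc{X}^\dg_{\mc{I}_N,\mc{N}}\n\mc{V}\1\mc{Y}^\dg_{\mc{M},\mc{I}_L},
\end{eqnarray*}
as required. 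The principal obstacle is bookkeeping rather than conceptual: one must track the four different Einstein contractions $\m,\n,\1,\f$ together with the shapes of each intermediate tensor, and verify that every identity insertion is matched by a range inclusion in the correct orientation. Once those are pinned down, the cancellations cascade exactly along the template of Theorem~\ref{A1}(a).
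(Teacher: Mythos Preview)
Your proposal is correct and follows essentially the same route as the paper: first prove the unweighted identity $\mc{A}^\dg=\mc{X}^\dg\n\mc{V}\1\mc{Y}^\dg$ via the range equalities $\mathfrak{R}(\mc{X}^*)=\mathfrak{R}(\mc{A}^*)$, $\mathfrak{R}(\mc{Y})=\mathfrak{R}(\mc{A})$ and the inclusions $\mathfrak{R}[(\mc{X}^\dg)^*]\subseteq\mathfrak{R}(\mc{U}^*)$, $\mathfrak{R}(\mc{Y}^\dg)\subseteq\mathfrak{R}(\mc{W})$, then lift to the weighted version by the substitution $\mc{U}\mapsto\mc{M}^{1/2}\m\mc{U}$, $\mc{W}\mapsto\mc{W}\f\mc{N}^{-1/2}$ together with Lemma~\ref{42}(a),(b) and Corollary~\ref{3.4}. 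The only cosmetic difference is that you spell out the projection equalities $\mc{X}^\dg\n\mc{X}=\mc{A}^\dg\m\mc{A}$ and $\mc{Y}\1\mc{Y}^\dg=\mc{A}\f\mc{A}^\dg$ explicitly, whereas the paper folds them into the chain of equalities.
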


\begin{proof}
Let $ \mc{U}_1 = \mc{M}^{1/2}\m \mc{U} $ and $ \mc{W}_1 = \mc{W}\f \mc{N}^{-1/2} $. It is known, from Eq.\eqref{2.17},
\begin{equation*}
\mc{A}^\dg_{\mc{M},\mc{N}} = \mc{N}^{-1/2}\f(\mc{U}_1 \n\mc{V}\1\mc{W}_1)^\dg \f \mc{N}^{-1/2}.
\end{equation*}
Now using Eq.\eqref{lemma37} we have
$\mathfrak{R}(\mc{X}^*) = \mathfrak{R}[(\mc{U}*_N\mc{V}*_L\mc{W})^* *_N(\mc{U}^*)^\dag] =\mathfrak{R}(\mc{A}^*)
$  and
 $ \mathfrak{R}(\mc{Y}) = \mathfrak{R}(\mc{A})$.
 Also, from Lemma \ref{ID} (c), we have   
\begin{eqnarray*}
 \mathfrak{R}[(\mc{X}^\dag)^*] = \mathfrak{R}(\mc{X}) \subseteq \mathfrak{R}(\mc{U}^\dag) = \mathfrak{R}(\mc{U}^*) \textnormal{~and~} \mathfrak{R}(\mc{Y}^\dag) =\mathfrak{R}(\mc{Y}^*) = \mathfrak{R}(\mc{W}*_R\mc{W}^\dag *_L(\mc{U}*_N\mc{V})^*) \subseteq \mathfrak{R}(\mc{W}).
\end{eqnarray*}
Using Lemma~\ref{ID}[(a),(b)] and Lemma~\ref{IDR1}[(a),(b)], we get
 \begin{eqnarray}\nonumber
  \mc{X}^\dag *_N\mc{V} *_L\mc{Y}^\dag 
 &=& \mc{X}^\dag *_N\mc{U}^\dag*_M \mc{U}*_N\mc{V}*_L\mc{W}*_R\mc{W}^\dag *_L\mc{Y}^\dag \\\label{thm3.13}
 &=& \mc{A}^\dag *_N\mc{Y}*_L\mc{Y}^\dag
 = \mc{A}^\dag.
 \end{eqnarray}
 Using Lemma\ref{42}[(a),(b)]) one can conclude 
 \begin{eqnarray*}
 (\mc{U}_1 \n \mc{V} \1 \mc{W}_1)^\dg &=&  [\mc{U}_1^\dg \m \mc{M}^{1/2}\m \mc{A}\f \mc{N}^{-1/2} ]^\dg \n \mc{V}\1 [\mc{M}^{1/2}\m \mc{A}\f \mc{N}^{-1/2}\f \mc{W}_1 ^\dg]^\dg\\
 &=& [\mc{U}^\dg \m \mc{A}\f \mc{N}^{-1/2} ]^\dg \n \mc{V}\1 [\mc{M}^{1/2}\m \mc{A}\f \mc{W}^\dg]^\dg.
 \end{eqnarray*}
 \begin{eqnarray*}
\textnormal{Hence,}  ~  \mc{A}^\dg_{\mc{M},\mc{N}}
&=&   \mc{N}^{-1/2}\f [\mc{U}^\dg \m \mc{A}\f \mc{N}^{-1/2} ]^\dg \n \mc{V}\1 [\mc{M}^{1/2}\m \mc{A}\f \mc{W}^\dg]^\dg \m \mc{M}^{1/2}\\
&=& \mc{X}^\dg_{\mc{I}_N,\mc{N}}\n \mc{V}\1\mc{Y}^\dg_{\mc{M},\mc{I}_L}.
\end{eqnarray*}
Hence the proof is complete.
\end{proof}

By Lemma~\ref{42}[(a),(b)] and Eq. \eqref{2.17}, and Eq. \eqref{thm3.13} we have 
\begin{corollary}
Let $ \mc{U}\in \mathbb{C}^{I_1\times\cdots\times I_M \times J_1
\times\cdots\times J_N }$, $\mc{V}\in \mathbb{C}^{J_1\times\cdots\times J_N \times K_1 \times\cdots\times K_L }$ and $\mc{W}\in \mathbb{C}^{K_1\times\cdots\times K_L \times H_1\times\cdots\times H_R }$. Let $ \mc{A} = \mc{U}\n \mc{V} \1 \mc{W} $. Let $ \mc{M} \in \mathbb{C}^{I_1\times\cdots\times I_M \times I_1 \times\cdots\times I_M}$, $\mc{N} \in \mathbb{C}^{H_1\times\cdots\times H_R \times H_1 \times\cdots\times H_R} $, 
$ \mc{P} \in \mathbb{C}^{J_1\times\cdots\times J_N \times J_1 \times\cdots\times J_N} $ and $ \mc{Q} \in \mathbb{C}^{K_1\times\cdots\times K_L \times K_1 \times\cdots\times K_L} $
 are Hermitian positive definite tensors. Then the weighted Moore-Penrose inverse of  $ \mc{A} $ with respect to $\mc{M}$ { and} $\mc{N}$ { satisfies the following identities}:  \\
(a) $\mc{A}_{\mc{M},\mc{N}}^{\dg}
=(\mc{U}_{\mc{I}_M,\mc{P}}^{\dg}\m\mc{A})_{\mc{P},\mc{N}}^{\dg}\n\mc{V}\1(\mc{A}\f \mc{W}_{\mc{Q},\mc{I}_R}^{\dg})_{\mc{M},\mc{Q}}^{\dg},$\\
(b) $ \mc{A}_{\mc{M},\mc{N}}^{\dg}
= [(\mc{U}*_N\mc{V}*_L\mc{V}_{\mc{P},\mc{I}_L}^{\dg})_{\mc{M},\mc{P}}^{\dg}*_M\mc{A}]_{\mc{P},\mc{N}}^{\dg}*_N\mc{V}*_L[\mc{A}\f(\mc{V}_{\mc{I}_N,\mc{Q}}^{\dg}*_N\mc{V}*_L\mc{W})_{\mc{Q},\mc{N}}^{\dg}]_{\mc{M},\mc{Q}}^{\dg}$.

\end{corollary}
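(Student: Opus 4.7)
My plan is to derive both identities by the same three-step recipe: reduce the weighted Moore--Penrose inverse to its unweighted counterpart via Eq.~\eqref{2.17}, apply Eq.~\eqref{thm3.13} to a carefully rescaled factorization of $\mc{A}$, and translate the resulting factors back to weighted form using Corollary~\ref{3.4} and Lemma~\ref{42}.

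For part~(a), I will set $\tilde{\mc{A}} = \mc{M}^{1/2}\m \mc{A}\f \mc{N}^{-1/2}$, so that Eq.~\eqref{2.17} gives $\mc{A}_{\mc{M},\mc{N}}^\dg = \mc{N}^{-1/2}\f \tilde{\mc{A}}^\dg \m \mc{M}^{1/2}$. I then introduce the rescaled factorization $\tilde{\mc{A}} = \tilde{\mc{U}}\n\tilde{\mc{V}}\1\tilde{\mc{W}}$ with $\tilde{\mc{U}} = \mc{M}^{1/2}\m \mc{U}\n \mc{P}^{-1/2}$, $\tilde{\mc{V}} = \mc{P}^{1/2}\n \mc{V}\1 \mc{Q}^{-1/2}$, $\tilde{\mc{W}} = \mc{Q}^{1/2}\1 \mc{W}\f \mc{N}^{-1/2}$, and invoke Eq.~\eqref{thm3.13} to obtain $\tilde{\mc{A}}^\dg = (\tilde{\mc{U}}^\dg\m\tilde{\mc{A}})^\dg\n \tilde{\mc{V}}\1(\tilde{\mc{A}}\f \tilde{\mc{W}}^\dg)^\dg$. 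The translation step is the heart of the argument: Corollary~\ref{3.4}(b) gives $\mc{P}^{1/2}\n \mc{U}_{\mc{I}_M,\mc{P}}^\dg = (\mc{U}\n \mc{P}^{-1/2})^\dg$, so expanding $(\mc{U}_{\mc{I}_M,\mc{P}}^\dg\m \mc{A})_{\mc{P},\mc{N}}^\dg$ by Eq.~\eqref{2.17} reduces matters to verifying $(\mc{U}\n\mc{P}^{-1/2})^\dg\m \mc{A}\f\mc{N}^{-1/2} = \tilde{\mc{U}}^\dg\m\tilde{\mc{A}}$, which follows from Corollary~\ref{3.4}(a) together with Lemma~\ref{42}(a) once one observes that $\mc{A}$ factors through $\mc{U}\n\mc{P}^{-1/2}$ via $\mc{A} = (\mc{U}\n\mc{P}^{-1/2})\n(\mc{P}^{1/2}\n\mc{V}\1\mc{W})$. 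A mirror computation using Corollary~\ref{3.4}(a) and Lemma~\ref{42}(b) identifies $\mc{Q}^{-1/2}\1(\tilde{\mc{A}}\f\tilde{\mc{W}}^\dg)^\dg\m\mc{M}^{1/2}$ with $(\mc{A}\f\mc{W}_{\mc{Q},\mc{I}_R}^\dg)_{\mc{M},\mc{Q}}^\dg$. Since $\mc{P}^{1/2}\n\mc{V}\1\mc{Q}^{-1/2} = \tilde{\mc{V}}$, assembling the three factors produces~(a).

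For part~(b), I will apply the same recipe after a preparatory rewriting of the factorization. Property~(1) of Definition~\ref{43} applied to $\mc{V}$ with each of the weight pairs $(\mc{P},\mc{I}_L)$ and $(\mc{I}_N,\mc{Q})$ yields $\mc{V}\1\mc{V}_{\mc{P},\mc{I}_L}^\dg\n\mc{V} = \mc{V} = \mc{V}\1\mc{V}_{\mc{I}_N,\mc{Q}}^\dg\n\mc{V}$, so $\mc{A}$ admits the alternative factorization $\mc{A} = \mc{U}_\sharp\n\mc{V}\1\mc{W}_\sharp$ with $\mc{U}_\sharp = \mc{U}\n\mc{V}\1\mc{V}_{\mc{P},\mc{I}_L}^\dg$ and $\mc{W}_\sharp = \mc{V}_{\mc{I}_N,\mc{Q}}^\dg\n\mc{V}\1\mc{W}$, while retaining $\mc{V}$ as the central factor. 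Running the argument of part~(a) with $(\mc{U},\mc{W})$ replaced by $(\mc{U}_\sharp,\mc{W}_\sharp)$ will yield~(b). The main obstacle in both parts will be the translation step: Lemma~\ref{42}(a) literally equates $\mc{B}_{\mc{M},\mc{I}_N}^\dg\m\mc{B}$ only with $\mc{B}^\dg\m\mc{B}$, so to conclude the same equality with $\mc{A}$ in place of $\mc{B}$ on the right one must exploit the specific factorization of $\mc{A}$ through $\mc{B}$ (here $\mc{B} = \mc{U}\n\mc{P}^{-1/2}$ or $\mc{B} = \mc{U}_\sharp\n\mc{P}^{-1/2}$), extending the equality by right-multiplication by the remaining factors.
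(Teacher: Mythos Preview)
Your proposal is correct and follows the paper's route exactly: the paper gives no detailed proof but simply cites Lemma~\ref{42}(a,b), Eq.~\eqref{2.17}, and Eq.~\eqref{thm3.13}, and your three-step recipe (reduce via Eq.~\eqref{2.17}, apply Eq.~\eqref{thm3.13} to a rescaled factorization, translate back via Lemma~\ref{42} and Corollary~\ref{3.4}) is precisely the content of that citation. Your factoring-through observation is the right way to extend Lemma~\ref{42} from $\mc{B}$ to $\mc{A}$, and for part~(b) your refactoring $\mc{A}=\mc{U}_\sharp\n\mc{V}\1\mc{W}_\sharp$ correctly reduces the claim to another instance of the same argument; just note that in the translation step you are free to identify $\tilde{\mc{U}_\sharp}^\dg\m\tilde{\mc{A}}$ directly with $\mc{P}^{1/2}\n(\mc{U}_\sharp)_{\mc{M},\mc{P}}^\dg\m\mc{A}\f\mc{N}^{-1/2}$ (via Eq.~\eqref{2.17}) rather than passing through $(\mc{U}_\sharp)_{\mc{I}_M,\mc{P}}^\dg$, which gives the weights stated in~(b) immediately.
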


\subsection{The full rank decomposition}
The tensors and their decompositions originally appeared in 1927, \cite{Hit27}. The idea of decomposing a tensor  as a product of tensors with a more desirable structure may well be one of the most important developments in numerical analysis such as the implementation of numerically efficient algorithms and the solution of multilinear systems \cite{kolda,Che2018,Marcar,Kolda01}.   As part of this section, we focus on the full rank decomposition of a tensor.  Unfortunately, It is very difficult to compute tensor rank. But the authors of in \cite{psmv18} introduced an useful and effective definition of the tensor rank, termed as reshaping rank. With the help of reshaping rank, We present one of our important results, full rank decomposition of an arbitrary-order tensor.

\begin{theorem}\label{49}
Let $ \mc{A} \in  \mathbb{C}^{I_1 \times \cdots \times I_M \times J_1 \times \cdots \times J_N} $. Then there exist a left invertible tensor 
$ \mc{F} \in \mathbb{C}^{I_1 \times \cdots \times I_M \times H_1 \times \cdots \times H_R} $ and a right invertible tensor $ \mc{G} \in \mathbb{C}^{H_1 \times \cdots \times H_R \times J_1 \times \cdots \times J_N} $  such that
\begin{equation} \label{37}
    \mc{A} = \mc{F} \f \mc{G},
\end{equation}
where  $ rshrank(\mc{F}) =rshrank(\mc{G})= rshrank(\mc{A}) = r = H_1 \cdots H_R $.  This is called the full rank decomposition of the tensor $ \mc{A} $.
\end{theorem}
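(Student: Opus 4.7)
The plan is to transport the problem to the matrix setting through the reshape isomorphism, invoke the classical matrix full rank decomposition there, and pull the factorization back using $rsh^{-1}$ together with the homomorphism property (\ref{66})--(\ref{77}). There is essentially no new content beyond the matrix theorem once the reshape machinery is in hand.

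First I would set $A = rsh(\mc{A}) \in \mathbb{C}^{I_1 \cdots I_M \times J_1 \cdots J_N}$, so that $\mathrm{rank}(A) = rshrank(\mc{A}) = r$ by (\ref{21}). The classical matrix full rank factorization then produces a decomposition $A = FG$ with $F \in \mathbb{C}^{I_1 \cdots I_M \times r}$ of full column rank and $G \in \mathbb{C}^{r \times J_1 \cdots J_N}$ of full row rank. In particular, $F$ admits a left inverse $F^L \in \mathbb{C}^{r \times I_1 \cdots I_M}$ with $F^L F = I_r$, and $G$ admits a right inverse $G^R \in \mathbb{C}^{J_1 \cdots J_N \times r}$ with $G G^R = I_r$.

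Next, choose positive integers $H_1, \ldots, H_R$ with $H_1 \cdots H_R = r$ (for instance $R = 1$, $H_1 = r$, which is always available) and set
\begin{equation*}
\mc{F} = rsh^{-1}(F) \in \mathbb{C}^{I_1 \times \cdots \times I_M \times H_1 \times \cdots \times H_R}, \quad \mc{G} = rsh^{-1}(G) \in \mathbb{C}^{H_1 \times \cdots \times H_R \times J_1 \times \cdots \times J_N}.
\end{equation*}
Applying the homomorphism identity (\ref{77}) to the matrix factorization gives $\mc{A} = rsh^{-1}(A) = rsh^{-1}(F) \f rsh^{-1}(G) = \mc{F} \f \mc{G}$, while (\ref{21}) delivers $rshrank(\mc{F}) = \mathrm{rank}(F) = r$ and $rshrank(\mc{G}) = \mathrm{rank}(G) = r$.

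For the invertibility claims I would put $\mc{F}^L = rsh^{-1}(F^L)$ and $\mc{G}^R = rsh^{-1}(G^R)$. Applying (\ref{77}) to $F^L F = I_r$ and $G G^R = I_r$, together with the observation that $rsh^{-1}(I_r) = \mc{I}_R$ (since $rsh$ is a bijection on the index set and $H_1 \cdots H_R = r$, the Kronecker delta on flat indices pulls back to $\prod_{k} \delta_{h_k h'_k}$), yields $\mc{F}^L \m \mc{F} = \mc{I}_R$ and $\mc{G} \n \mc{G}^R = \mc{I}_R$, so $\mc{F}$ is left invertible via $\m$ and $\mc{G}$ is right invertible via $\n$. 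I do not anticipate a genuine obstacle: the only modest design choice is the factorization $r = H_1 \cdots H_R$, and this is always trivially solvable. This is precisely the payoff of working with the reshape operation rather than the non-homomorphic map $\phi$ used in \cite{LIANG2018}, which forced the earlier full rank decomposition to be restricted to square tensors.
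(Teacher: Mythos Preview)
Your proposal is correct and follows essentially the same route as the paper: reshape $\mc{A}$ to a matrix, apply the classical full rank factorization $A=FG$, and pull back via $rsh^{-1}$ and the homomorphism identity (\ref{77}). You are in fact more thorough than the paper, which does not spell out the choice of a factorization $r=H_1\cdots H_R$ or the explicit construction of the one-sided inverses $\mc{F}^L=rsh^{-1}(F^L)$ and $\mc{G}^R=rsh^{-1}(G^R)$; the paper simply asserts left/right invertibility of $\mc{F}$ and $\mc{G}$ after the pullback.
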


\begin{proof}
 Let the matrix $A = rsh(\mc{A}) \in  \mathbb{C}^{I_1 \cdots I_M \times J_1 \cdots J_N}$. Then we have, rank(A) = r. 
 Suppose that the matrix $A$  has a full rank decompositions, as follows,
 \begin{equation}\label{frd}
A = FG,     
 \end{equation}
 where $ F \in \mathbb{C}^{I_1 \cdots I_M \times H_1 \cdots H_R} $ is a full column rank matrix and $ G \in \mathbb{C}^{H_1 \cdots H_R \times J_1 \cdots J_N} $ is a full row rank matrix.  From Eq. \eqref{77} and Eq. \eqref{frd} we obtain
 \begin{equation}\label{rem-FRF}
     rhs^{-1}(A)=rhs^{-1}(FG)= rhs^{-1}(F)*_R rhs^{-1}(G),
 \end{equation}
 where  $\mc{F} = rsh^{-1}(F) \in  \mathbb{C}^{I_1 \times \cdots \times I_M \times H_1 \times \cdots \times H_R} \textnormal{~ and~}  \mc{G} = rsh^{-1}(G) \in \mathbb{C}^{H_1 \times \cdots \times H_R \times J_1 \times \cdots \times J_N}.$
 it follows that
 \begin{equation*}
    \mc{A} = \mc{F} \f \mc{G},
\end{equation*}
where  $\mc{F} \in \mathbb{C}^{I_1 \times \cdots \times I_M \times H_1 \times \cdots \times H_R} $ is the left invertible tensor and $\mc{G} \in \mathbb{C}^{H_1 \times \cdots \times H_R \times J_1 \times \cdots \times J_N}$ is a right invertible tensor.
\end{proof}
The prove of the above theorem was proved earlier
(see  Lemma 2.3(a), \cite{LIANG2018}) indifferent way. Here, we have provided another proof without using reshape operation. Further, the authors of in \cite{LIANG2018} computed the Moore-Penrose inverse of a tensor using full rank decomposition of tensors.  as follows
\begin{lemma}{(Theorem 3.7, \cite{LIANG2018})} \label{MPComput}
If the full rank decomposition of a tensor $ \mc{A} \in  \mathbb{C}^{I_1 \times \cdots \times I_M \times J_1 \times \cdots \times J_N}$ is given as Theorem \ref{49}, then
\begin{equation}
\mc{A}^\dg = \mc{G}^**_R(\mc{F}^**_M\mc{A}*_N\mc{G}^*)^{-1}*_R\mc{F}^*.
\end{equation}
\end{lemma}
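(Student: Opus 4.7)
The plan is to reduce the claim to the classical matrix identity for the Moore--Penrose inverse of a full-rank product, via the reshape isomorphism. As a first step, I would apply $rsh$ to the factorization $\mc{A}=\mc{F}*_R\mc{G}$. By the homomorphism relation \eqref{66} this yields a matrix factorization $A=FG$, where $A=rsh(\mc{A})$, $F=rsh(\mc{F})\in\mathbb{C}^{I_1\cdots I_M\times r}$, and $G=rsh(\mc{G})\in\mathbb{C}^{r\times J_1\cdots J_N}$, with $r=H_1\cdots H_R$. The assumption $rshrank(\mc{F})=rshrank(\mc{G})=r$ combined with the shapes of $F$ and $G$ forces $F$ to have full column rank and $G$ to have full row rank, so $A=FG$ is a full rank decomposition of the matrix $A$.

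As a second step, I would invoke the classical matrix identity $A^\dg=G^*(F^*AG^*)^{-1}F^*$, which is valid under these full-rank assumptions. The justification is that $F^*F$ and $GG^*$ are both $r\times r$ Hermitian positive definite matrices, so $F^*AG^*=(F^*F)(GG^*)$ is invertible; the expression then collapses to the standard $A^\dg=G^*(GG^*)^{-1}(F^*F)^{-1}F^*$, which one checks against Penrose's four conditions for $A=FG$. I would quote this as a known matrix result rather than reverify it.

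The third step is to push this identity back through $rsh^{-1}$. Using \eqref{77} in the appropriate shapes, together with the componentwise observation that $rsh$ commutes with conjugate transpose (so $rsh(\mc{F}^*)=F^*$ and $rsh(\mc{G}^*)=G^*$), and the uniqueness of the Moore--Penrose inverse provided by Theorem~\ref{MPIunique} (which gives $rsh^{-1}(A^\dg)=\mc{A}^\dg$), one recovers
\[\mc{A}^\dg=\mc{G}^**_R(\mc{F}^**_M\mc{A}*_N\mc{G}^*)^{-1}*_R\mc{F}^*.\]

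The main obstacle is dimensional bookkeeping: when $rsh^{-1}$ is distributed across a nested matrix product, the correct Einstein contraction ($*_M$, $*_N$, or $*_R$) must be inserted at each juncture so that the reshaped orders agree, and the shapes $I_1\cdots I_M$, $J_1\cdots J_N$, $H_1\cdots H_R$ must be tracked carefully at each occurrence of $\mc{F}^*$, $\mc{G}^*$, and $\mc{A}$. A secondary subtlety is interpreting the tensor ``inverse'' $(\mc{F}^**_M\mc{A}*_N\mc{G}^*)^{-1}$ inside the Einstein-product calculus; this amounts to checking that $rsh^{-1}$ sends the ordinary inverse of an invertible square matrix to the $*_R$-inverse of the corresponding square tensor in $\mathbb{C}^{H_1\times\cdots\times H_R\times H_1\times\cdots\times H_R}$, which is immediate from the homomorphism property applied to $XY=I$.
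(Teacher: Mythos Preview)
The paper does not supply its own proof of this lemma; it is quoted verbatim as Theorem~3.7 of \cite{LIANG2018}, so there is no in-paper argument to compare against. Your proposed route via the reshape isomorphism is correct and is exactly in the spirit of how the paper establishes the neighboring results (Lemma~\ref{SVDlemma} and Theorem~\ref{49}): push down to matrices with $rsh$, invoke the classical identity $A^\dg=G^*(F^*AG^*)^{-1}F^*$ for a full rank factorization $A=FG$, and lift back with $rsh^{-1}$ using \eqref{66}--\eqref{77}. The only points worth tightening are the ones you already flag: that $rsh$ commutes with conjugate transpose (immediate from the definitions), and that $rsh^{-1}$ carries matrix inverses to $*_R$-inverses of the corresponding square tensors in $\mathbb{C}^{H_1\times\cdots\times H_R\times H_1\times\cdots\times H_R}$, both of which follow from the homomorphism property.
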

Now, the following theorem expressed the weighted Moore-Penrose inverse of a tensor 
$ \mc{A} \in  \mathbb{C}^{I_1 \times \cdots \times I_M \times J_1 \times \cdots \times J_N}$  in form of the ordinary tensor inverse. 
\begin{theorem}
If the full rank decomposition of a tensor $ \mc{A} \in \mathbb{C}^{I_1 \times \cdots \times I_M \times J_1 \times \cdots \times J_N} $ is given by Eq. \eqref{37}, then 
the weighted Moore-Penrose inverse of $\mc{A} $ can be written as
\begin{equation*}
    \mc{A}^\dg_{\mc{M},\mc{N}} = \mc{N}^{-1}\n \mc{G}^* \f (\mc{F}^* \m \mc{M} \m \mc{A} \n\mc{N}^{-1}\n \mc{G}^*)^{-1} \f \mc{F}^* \m \mc{M},
\end{equation*}
\end{theorem}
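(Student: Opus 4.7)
The plan is to reduce the weighted case to the unweighted case via Theorem~\ref{GWMPI}, and then apply Lemma~\ref{MPComput} to the resulting ordinary Moore--Penrose inverse. Specifically, I would start from the identity
\begin{equation*}
\mc{A}^\dg_{\mc{M},\mc{N}} \;=\; \mc{N}^{-1/2} *_N (\mc{M}^{1/2} *_M \mc{A} *_N \mc{N}^{-1/2})^\dg *_M \mc{M}^{1/2},
\end{equation*}
so the main task becomes computing the Moore--Penrose inverse of the auxiliary tensor $\widetilde{\mc{A}} := \mc{M}^{1/2} *_M \mc{A} *_N \mc{N}^{-1/2}$.

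Next I would produce a full rank decomposition of $\widetilde{\mc{A}}$ by composing the given decomposition $\mc{A}=\mc{F}*_R\mc{G}$ with the invertible weights. Setting $\widetilde{\mc{F}} := \mc{M}^{1/2}*_M\mc{F}$ and $\widetilde{\mc{G}} := \mc{G}*_N\mc{N}^{-1/2}$, associativity of the Einstein product yields $\widetilde{\mc{A}} = \widetilde{\mc{F}}*_R\widetilde{\mc{G}}$. Because $\mc{M}^{1/2}$ and $\mc{N}^{1/2}$ are invertible Hermitian tensors, multiplying a left (respectively right) invertible tensor by them preserves left (respectively right) invertibility and preserves the reshape rank, so $\widetilde{\mc{F}}$ and $\widetilde{\mc{G}}$ again satisfy the hypotheses of Theorem~\ref{49}.

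Applying Lemma~\ref{MPComput} to $\widetilde{\mc{A}}$ and using Hermiticity of $\mc{M}^{1/2}$ and $\mc{N}^{-1/2}$ to rewrite $\widetilde{\mc{F}}^*=\mc{F}^**_M\mc{M}^{1/2}$ and $\widetilde{\mc{G}}^*=\mc{N}^{-1/2}*_N\mc{G}^*$, I would collapse the interior product
\begin{equation*}
\widetilde{\mc{F}}^**_M\widetilde{\mc{A}}*_N\widetilde{\mc{G}}^* \;=\; \mc{F}^**_M\mc{M}*_M\mc{A}*_N\mc{N}^{-1}*_N\mc{G}^*,
\end{equation*}
obtaining
\begin{equation*}
\widetilde{\mc{A}}^\dg \;=\; \mc{N}^{-1/2}*_N\mc{G}^**_R(\mc{F}^**_M\mc{M}*_M\mc{A}*_N\mc{N}^{-1}*_N\mc{G}^*)^{-1}*_R\mc{F}^**_M\mc{M}^{1/2}.
\end{equation*}
Substituting this back into the formula from Theorem~\ref{GWMPI} and combining $\mc{N}^{-1/2}*_N\mc{N}^{-1/2}=\mc{N}^{-1}$ and $\mc{M}^{1/2}*_M\mc{M}^{1/2}=\mc{M}$ yields the claimed expression.

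The main obstacle, modest as it is, is justifying that $\widetilde{\mc{F}}$ and $\widetilde{\mc{G}}$ form an honest full rank decomposition of $\widetilde{\mc{A}}$ so that Lemma~\ref{MPComput} is applicable; this reduces via the reshape isomorphism (Lemma from \cite{psmv18}) to the matrix fact that multiplying a full column rank matrix on the left by an invertible matrix preserves full column rank (and dually on the right), guaranteeing that the middle factor $\mc{F}^**_M\mc{M}*_M\mc{A}*_N\mc{N}^{-1}*_N\mc{G}^*$ is indeed invertible. Once that is in place, the rest is a clean algebraic collapse.
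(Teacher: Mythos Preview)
Your proposal is correct and follows essentially the same route as the paper: reduce to the unweighted case via Theorem~\ref{GWMPI}, observe that $(\mc{M}^{1/2}*_M\mc{F})*_R(\mc{G}*_N\mc{N}^{-1/2})$ is again a full rank decomposition because the weights are invertible, apply Lemma~\ref{MPComput}, and simplify. Your justification of why the new factors remain left/right invertible is, if anything, slightly more explicit than the paper's.
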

where  $ \mc{M}  \in \mathbb{C}^{I_1\times\cdots\times I_M \times I_1 \times\cdots\times I_M} $ and $ \mc{N} \in \mathbb{C}^{J_1\times\cdots\times J_N \times J_1 \times\cdots\times J_N} $ are Hermitian positive definite tensors.

\begin{proof}
 From Eq. \eqref{2.17}, we have  
 \begin{eqnarray*}
  \mc{A}^\dg_{\mc{M},\mc{N}} = \mc{N}^{-1/2}\n (\mc{M}^{1/2} \m\mc{A}\n\mc{N}^{-1/2})^\dg \m \mc{M}^{1/2} 
  = \mc{N}^{1/2}\n \mc{B}^\dg \m \mc{M}^{1/2},
 \end{eqnarray*}
where~$\mc{B} = (\mc{M}^{1/2} \m\mc{F}) \f (\mc{G} \n \mc{N}^{-1/2})$, ~and~  $ \mc{M} ~\&~ \mc{N}$ are Hermitian positive definite tensors. Here $\mc{B} $ is in the form of full rank decomposition, as both $\mc{M}^{1/2} $ and $ \mc{N}^{1/2} $ are invertible. Now, from Lemma~\ref{MPComput}, we get
\begin{eqnarray*}
 \mc{B}^\dg 
 &=& (\mc{G} \n \mc{N}^{-1/2})^* \f [(\mc{M}^{1/2} \m\mc{F})^* \m (\mc{M}^{1/2} \m\mc{F}) \f (\mc{G} \n \mc{N}^{-1/2}) \n (\mc{G} \n \mc{N}^{-1/2})^*]^{-1} \f (\mc{M}^{1/2} \m\mc{F})^* \\
 &=& \mc{N}^{-1/2}\n \mc{G}^* \f (\mc{F}^* \m\mc{M} \m \mc{A} \n \mc{N}^{-1} \n\mc{G}^*)^{-1} \f \mc{F}^* \m \mc{M}^{1/2}.
\end{eqnarray*}
 Therefore, $ \mc{A}^\dg_{\mc{M},\mc{N}} = \mc{N}^{-1} \n \mc{G}^* \f  (\mc{F}^* \m\mc{M} \m \mc{A} \n \mc{N}^{-1} \n\mc{G}^*)^{-1}\f \mc{F}^* \m \mc{M} $.
\end{proof}

In particular when the arbitrary-order tensor, $ \mc{A} $ is either left invertible or  right invertible, we have the following results.

\begin{corollary}\label{313}
Let a tensor $ \mc{A} \in \mathbb{C}^{I_1 \times \cdots \times I_M \times J_1 \times \cdots \times J_N} $ has the full rank decomposition. 
\begin{enumerate}
\item[(a)] If the tensor $\mc{A} $ is left invertible, then $\mc{A}^\dg_{\mc{M},\mc{N}} = \mc{N}^{-1}\n (\mc{A}^*\m \mc{M}\m \mc{A}\n \mc{N}^{-1})^{-1}\n \mc{A}^* \m\mc{M} $;
\item[(b)] If the tensor $\mc{A} $ is right invertible, then 
$ \mc{A}^\dg_{\mc{M},\mc{N}} = \mc{N}^{-1} \n \mc{A}^* \m (\mc{M}\m \mc{A}\n \mc{N}^{-1}\n \mc{A}^*)^{-1} \m \mc{M} $.
\end{enumerate}

\end{corollary}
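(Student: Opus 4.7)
The plan is to specialize the formula
\begin{equation*}
\mc{A}^\dg_{\mc{M},\mc{N}} = \mc{N}^{-1}\n \mc{G}^* \f (\mc{F}^* \m \mc{M} \m \mc{A} \n\mc{N}^{-1}\n \mc{G}^*)^{-1} \f \mc{F}^* \m \mc{M}
\end{equation*}
from the preceding theorem to two cases in which the full rank decomposition of $\mc{A}$ collapses to a trivial form.

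For part (a), since $\mc{A}$ is left invertible, we have $rshrank(\mc{A}) = J_1\cdots J_N$ (by the remark on left/right invertibility following Lemma~2.7), so a valid full rank decomposition is obtained by choosing $H_k = J_k$ for $k=1,\ldots,N$ and setting $\mc{F} = \mc{A}$, $\mc{G} = \mc{I}_N \in \mathbb{C}^{J_1\times\cdots\times J_N \times J_1\times\cdots\times J_N}$, so that $\mc{A} = \mc{A}\n\mc{I}_N$. Then $\mc{G}^* = \mc{I}_N$ and $\f = *_N$, and plugging into the formula, the identity tensors collapse via $\mc{N}^{-1}\n\mc{I}_N = \mc{N}^{-1}$ and $\mc{A}^*\m\mc{M}\m\mc{A}\n\mc{N}^{-1}\n\mc{I}_N = \mc{A}^*\m\mc{M}\m\mc{A}\n\mc{N}^{-1}$, yielding the stated identity.

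For part (b), since $\mc{A}$ is right invertible, $rshrank(\mc{A}) = I_1\cdots I_M$, and we take $H_k = I_k$ with $\mc{F} = \mc{I}_M \in \mathbb{C}^{I_1\times\cdots\times I_M \times I_1\times\cdots\times I_M}$ and $\mc{G} = \mc{A}$, giving $\mc{A} = \mc{I}_M \m \mc{A}$. Now $\f = *_M$, and substituting into the formula, the identity-tensor factors again simplify, leaving
\begin{equation*}
\mc{A}^\dg_{\mc{M},\mc{N}} = \mc{N}^{-1} \n \mc{A}^* \m (\mc{M}\m \mc{A}\n \mc{N}^{-1}\n \mc{A}^*)^{-1} \m \mc{M},
\end{equation*}
as required.

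The only point requiring a moment's care is verifying that these trivial choices are genuine full rank decompositions in the sense of Theorem~\ref{49}; this is immediate from the fact that $\mc{I}_N$ and $\mc{I}_M$ are reshape-invertible and that left (resp.\ right) invertibility forces $rshrank(\mc{A})$ to equal the corresponding product of dimensions. Once that is noted, the rest is a mechanical collapse of identity tensors in the master formula.
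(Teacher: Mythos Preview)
Your proposal is correct and is exactly the specialization the paper intends: the corollary is stated immediately after the general full rank decomposition formula without a separate proof, and your choice of $\mc{F}=\mc{A},\ \mc{G}=\mc{I}_N$ in the left-invertible case and $\mc{F}=\mc{I}_M,\ \mc{G}=\mc{A}$ in the right-invertible case is precisely the trivial full rank decomposition that collapses the master formula to the stated identities.
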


  It is easy to see that the full rank factorizations of a tensor $\mc{A} \in  \mathbb{C}^{I_1 \times \cdots \times I_M \times J_1 \times \cdots \times J_N}$ are not unique: if $ \mc{A} = \mc{F} *_R\mc{G}$ is one full rank factorization, where $ \mc{F} \in \mathbb{C}^{I_1 \times \cdots \times I_M \times H_1 \times \cdots \times H_R} $ is the left invertible tensor and $ \mc{G} \in \mathbb{C}^{H_1 \times \cdots \times H_R \times J_1 \times \cdots \times J_N} $ is the right invertible tensor, then there exist a invertible  tensor $\mc{P}$ of appropriate size, such that, $\mc{A} = (\mc{F} *_R\mc{P})*_R(\mc{P}^{-1}*_R\mc{G})$ is another full rank factorization. 
  The following Theorem represents the result.

\begin{theorem}
Let $ \mc{A} \in \mathbb{C}^{I_1 \times \cdots \times I_M \times J_1 \times \cdots \times J_N} $ with $ rshrank(\mc{A}) = r = H_1 H_2  \cdots H_R $.
Then $ \mc{A} $ has infinitely many full rank decompositions. However if $\mc{A}$ has two full rank decompositions, as follows,
\begin{equation*}
\mc{A} = \mc{F}\f \mc{G} = \mc{F}_1 \f \mc{G}_1,
\end{equation*}
 where $ \mc{F},~\mc{F}_1 \in \mathbb{C}^{I_1 \times \cdots \times I_M \times H_1 \times \cdots \times H_R} $ and $ \mc{G},~\mc{G}_1 \in \mathbb{C}^{H_1 \times \cdots \times H_R \times J_1 \times \cdots \times J_N} $, then there exists an invertible tensor $ \mc{B} $ such that 
 \begin{equation*}
\mc{F}_1 = \mc{F}\f \mc{B}~~~and~~~ \mc{G}_1 = \mc{B}^{-1} \f \mc{G}. 
 \end{equation*}
 Moreover, 
 \begin{equation*}
\mc{F}_1 ^\dg = (\mc{F} \f \mc{B})^\dg = \mc{B}^{-1} \f \mc{F}^\dg ~~ and ~~ \mc{G}_1 ^\dg = (\mc{B}^{-1} \f \mc{G})^\dg = \mc{G}^\dg \f \mc{B}.
 \end{equation*}
    \end{theorem}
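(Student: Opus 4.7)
The plan is to handle the three assertions of the theorem in turn: the existence of infinitely many full rank decompositions, the existence of the transition tensor $\mc{B}$, and the two Moore--Penrose formulas at the end.

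First, to exhibit infinitely many decompositions, I would take any invertible tensor $\mc{P} \in \mathbb{C}^{H_1\times\cdots\times H_R \times H_1\times\cdots\times H_R}$ and form the pair $(\mc{F}\f\mc{P},\,\mc{P}^{-1}\f\mc{G})$. Associativity of the Einstein product gives $(\mc{F}\f\mc{P})\f(\mc{P}^{-1}\f\mc{G}) = \mc{F}\f(\mc{P}\f\mc{P}^{-1})\f\mc{G} = \mc{F}\f\mc{G} = \mc{A}$, and the multiplicative property \eqref{66} of $rsh$ shows that both $rsh(\mc{F}\f\mc{P}) = FP$ and $rsh(\mc{P}^{-1}\f\mc{G}) = P^{-1}G$ retain rank $r$, so each choice of $\mc{P}$ yields a legitimate full rank decomposition.

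For the existence of $\mc{B}$, my approach is to pass to matrices via $rsh$ and invoke the classical matrix result. Applying the reshape to both decompositions yields $A = FG = F_1 G_1$, where by \eqref{21} each of the four matrix factors has rank $r$. The standard matrix theorem on uniqueness of full rank factorizations up to an invertible multiplier then supplies an invertible $B \in \mathbb{C}^{r\times r}$ with $F_1 = FB$ and $G_1 = B^{-1}G$. Pulling back via Eq.~\eqref{77}, I set $\mc{B} = rsh^{-1}(B)$ and $\mc{B}^{-1} := rsh^{-1}(B^{-1})$, which gives $\mc{F}_1 = \mc{F}\f\mc{B}$ and $\mc{G}_1 = \mc{B}^{-1}\f\mc{G}$. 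The identity $rsh^{-1}(BB^{-1}) = rsh^{-1}(I_r) = \mc{I}_R$ confirms that $\mc{B}$ is indeed invertible in the Einstein-product sense.

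For the Moore--Penrose formulas, I would verify the four axioms of Definition~\ref{defmpi} directly for the candidate $\mc{B}^{-1}\f\mc{F}^\dg$ as $(\mc{F}\f\mc{B})^\dg$, and analogously for $\mc{G}^\dg\f\mc{B}$ as $(\mc{B}^{-1}\f\mc{G})^\dg$. The crucial ingredients are that left invertibility of $\mc{F}$ yields $\mc{F}^\dg\m\mc{F} = \mc{I}_R$, right invertibility of $\mc{G}$ yields $\mc{G}\n\mc{G}^\dg = \mc{I}_R$, and $\mc{B}\f\mc{B}^{-1} = \mc{B}^{-1}\f\mc{B} = \mc{I}_R$. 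Under these, each of the four axioms collapses via associativity to a previously known Moore--Penrose identity for $\mc{F}$ or $\mc{G}$; for instance, axiom (3) for $\mc{F}\f\mc{B}$ reduces to $(\mc{F}\f\mc{F}^\dg)^* = \mc{F}\f\mc{F}^\dg$, which holds by definition of $\mc{F}^\dg$. Uniqueness of the Moore--Penrose inverse (Theorem~\ref{MPIunique}) then locks in both formulas. The only conceptually delicate step is ensuring that matrix-level invertibility of $B$ transfers to tensor-level invertibility of $\mc{B}$; this follows immediately from the homomorphism property of $rsh$ expressed in Eqs.~\eqref{66}--\eqref{77}, so all remaining verifications are routine bookkeeping using associativity.
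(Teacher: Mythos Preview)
Your proposal is correct but takes a different route from the paper in two places. For the existence of $\mc{B}$, you reshape the entire problem down to matrices, quote the classical full-rank-factorization uniqueness theorem, and reshape back; the paper instead works at the tensor level, constructing $\mc{B} = \mc{G}\n\mc{G}_1^\dg$ and $\mc{C} = \mc{F}_1^\dg\m\mc{F}$ explicitly (using $\mc{G}_1\n\mc{G}_1^\dg = \mc{I}_R$ from Corollary~\ref{313}), and only descends to matrices to verify that $\mc{B}$ has full rank. For the Moore--Penrose identities, you check the four axioms of Definition~\ref{defmpi} against the candidates $\mc{B}^{-1}\f\mc{F}^\dg$ and $\mc{G}^\dg\f\mc{B}$, relying on $\mc{F}^\dg\m\mc{F} = \mc{I}_R$ and invertibility of $\mc{B}$; the paper instead invokes the closed-form expression $\mc{F}_1^\dg = (\mc{F}_1^*\m\mc{F}_1)^{-1}\f\mc{F}_1^*$ (Corollary~\ref{313}(a) with identity weights) and simplifies algebraically. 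Your approach is shorter and more modular since it reuses existing matrix theory wholesale, while the paper's approach is more self-contained at the tensor level and exhibits $\mc{B}$ concretely in terms of the given factors.
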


\begin{proof}
Suppose the tensor, $ \mc{A} \in \mathbb{C}^{I_1 \times \cdots \times I_M \times J_1 \times \cdots \times J_N} $  has two full rank decompositions, as follows, \begin{eqnarray}\label{frd11}
\mc{A} = \mc{F}\f \mc{G} =  \mc{F}_1 \f \mc{G}_1
\end{eqnarray}
where $\mc{F}, ~\mc{F}_1 \in \mathbb{C}^{I_1 \times \cdots \times I_M \times H_1 \times \cdots \times H_R}$ and $\mc{G}, ~\mc{G}_1 \in \mathbb{C}^{H_1 \times \cdots \times H_R \times J_1 \times \cdots \times J_N} $. Then 
\begin{equation*}
\mc{F} \f \mc{G}\n \mc{G}_1 ^\dg = \mc{F}_1 \f \mc{G}_1 \n \mc{G}_1 ^\dg.
\end{equation*} 
Substituting $ \mc{M}= \mc{I}_M $ and $\mc{N} = \mc{I}_N $ in Corollary \ref{313}(b) we have,  $ \mc{G}_1 \n \mc{G}_1 ^\dg = \mc{I}_R$. \\ Therefore, $\mc{F}_1 = \mc{F}\f (\mc{G}\n\mc{G}_1 ^\dg)$,  similarly we can find $\mc{G}_1 = (\mc{F}_1^\dg\m\mc{F})\f\mc{G}.$ \\ 
Let $ rsh(\mc{G}) = G =reshape(\mc{G},r,J_1\cdots J_N)$ and $ rsh(\mc{G}_1) = G_1 =reshape(\mc{G}_1,r,J_1\cdots J_N)$. Then $ rsh(\mc{G}\n \mc{G}_1 ^\dg) = G G_1^\dg \in \mathbb{C}^{r \times r}$ and
\begin{equation*}
r = rshrank(\mc{F}_1) = rshrank(\mc{F}\f(\mc{G}\n\mc{G}_1 ^\dg)) \leq rshrank(\mc{G}\n\mc{G}_1 ^\dg) = rank(GG_1^\dg) \leq r
\end{equation*}
Hence $ GG_1^\dg $  is invertible as it has full rank. This concluded $ \mc{G}\n\mc{G}_1^\dg = rsh^{-1}(GG_1^\dg) $ is invertible. Similarly $ \mc{F}_1^\dg\m\mc{F} $ is also invertible. Let $ \mc{B} = \mc{G}\n\mc{G}_1^\dg $ and $ \mc{C} = \mc{F}_1 ^\dg\m \mc{F}$.  Then 
\begin{equation*}
\mc{C}\f \mc{B} = \mc{F}_1^\dg\m\mc{F}\f\mc{G}\n \mc{G}_1 ^\dg = \mc{F}_1^\dg\m\mc{F}_1\f\mc{G}_1\n \mc{G}_1 ^\dg = \mc{I}_R
\end{equation*}
is equivalent to $ \mc{C} = \mc{B}^{-1} $. Therefore 
\begin{equation*}
 \mc{F}_1 = \mc{F}\f \mc{B}  \textnormal{~~and~~}  \mc{G}_1 = \mc{B}^{-1}\f \mc{G}.  
\end{equation*}
Further 
\begin{eqnarray*}
\mc{F}_1^\dg &=& ((\mc{F}\f\mc{B})^* \m\mc{F}\f\mc{B})^{-1}\f(\mc{F}\f\mc{B})^*\\
&=& \mc{B}^{-1}\f(\mc{F}^*\m\mc{F})^{-1}\f(\mc{B}^*)^{-1}\f \mc{B}^* \f\mc{F}^* \\
&=& \mc{B}^{-1}\f(\mc{F}^*\m\mc{F})^{-1}\f\mc{F}^* = \mc{B}^{-1}\f\mc{F}^\dg.
\end{eqnarray*}
Similarly $ \mc{G}_1 ^\dg = \mc{G}^\dg\f\mc{B}$. 
\end{proof}

\section{Reverse order law}

In this section, we present various necessary and sufficient conditions of the reverse-order law for the weighted Moore-Penrose inverses of tensors. The first result obtained below addresses the sufficient condition for reverse-order law of tensor.

\begin{theorem}
Let $\mc{A}\in \mathbb{C}^{I_1\times\cdots\times I_M \times J_1
\times\cdots\times J_N }$ and $\mc{B}\in \mathbb{C}^{J_1\times\cdots\times J_N \times K_1\times\cdots\times K_L }$. Let $\mc{M}  \in \mathbb{C}^{I_1\times\cdots\times I_M \times I_1 \times\cdots\times I_M},$ and $\mc{N} \in \mathbb{C}^{K_1\times\cdots\times K_L \times K_1 \times\cdots\times K_L}$ be a pair of Hermitian positive definite tensors.
If $ \mathfrak{R}(\mc{B}) = \mathfrak{R}(\mc{A}^*) $, Then 
\begin{eqnarray*}
(\mc{A}\n\mc{B})^\dg_{\mc{M},\mc{N}} = \mc{B}^\dg_{\mc{I}_N,\mc{N}}\n \mc{A}^\dg_{\mc{M},\mc{I}_N}.
\end{eqnarray*}

\end{theorem}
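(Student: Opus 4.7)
My plan is to reduce the weighted identity to the ordinary Moore--Penrose reverse order law via the explicit formula \eqref{2.17}. Set $\tilde{\mc{A}}:=\mc{M}^{1/2}*_M\mc{A}$ and $\tilde{\mc{B}}:=\mc{B}*_L\mc{N}^{-1/2}$. Applying \eqref{2.17} to $\mc{A}*_N\mc{B}$ gives
\begin{equation*}
(\mc{A}*_N\mc{B})^\dg_{\mc{M},\mc{N}}=\mc{N}^{-1/2}*_L(\tilde{\mc{A}}*_N\tilde{\mc{B}})^\dg *_M\mc{M}^{1/2},
\end{equation*}
while Corollary~\ref{3.4} gives
\begin{equation*}
\mc{B}^\dg_{\mc{I}_N,\mc{N}}*_N\mc{A}^\dg_{\mc{M},\mc{I}_N}=\mc{N}^{-1/2}*_L\tilde{\mc{B}}^\dg*_N\tilde{\mc{A}}^\dg*_M\mc{M}^{1/2}.
\end{equation*}
Thus it suffices to prove the ordinary (un-weighted) reverse order law $(\tilde{\mc{A}}*_N\tilde{\mc{B}})^\dg=\tilde{\mc{B}}^\dg*_N\tilde{\mc{A}}^\dg$.

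Next I would transfer the hypothesis. Because $\mc{M}^{1/2}$ and $\mc{N}^{-1/2}$ are Hermitian and invertible, right-multiplication by either preserves range, so $\mathfrak{R}(\tilde{\mc{B}})=\mathfrak{R}(\mc{B})$ and $\mathfrak{R}(\tilde{\mc{A}}^*)=\mathfrak{R}(\mc{A}^**_M\mc{M}^{1/2})=\mathfrak{R}(\mc{A}^*)$. The assumption $\mathfrak{R}(\mc{B})=\mathfrak{R}(\mc{A}^*)$ therefore gives $\mathfrak{R}(\tilde{\mc{B}})=\mathfrak{R}(\tilde{\mc{A}}^*)$. Applying Lemma~\ref{ID}(b) yields $\tilde{\mc{B}}*_L\tilde{\mc{B}}^\dg=\tilde{\mc{A}}^**_M(\tilde{\mc{A}}^*)^\dg$, and combining $(\tilde{\mc{A}}^*)^\dg=(\tilde{\mc{A}}^\dg)^*$ with the Hermiticity of $\tilde{\mc{A}}^\dg*_M\tilde{\mc{A}}$ (condition (4) of the Moore--Penrose definition) collapses the right hand side to $\tilde{\mc{A}}^\dg*_M\tilde{\mc{A}}$. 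This produces the single key identity
\begin{equation*}
\tilde{\mc{B}}*_L\tilde{\mc{B}}^\dg=\tilde{\mc{A}}^\dg*_M\tilde{\mc{A}}.
\end{equation*}

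The main computational step is then to check, using only this identity and the four Moore--Penrose equations for $\tilde{\mc{A}}$ and $\tilde{\mc{B}}$ separately, that $\mc{Y}:=\tilde{\mc{B}}^\dg*_N\tilde{\mc{A}}^\dg$ satisfies the four relations of Definition~\ref{defmpi} for $\tilde{\mc{A}}*_N\tilde{\mc{B}}$. Substituting the key identity shows $(\tilde{\mc{A}}*_N\tilde{\mc{B}})*_L\mc{Y}=\tilde{\mc{A}}*_N\tilde{\mc{A}}^\dg$ and $\mc{Y}*_M(\tilde{\mc{A}}*_N\tilde{\mc{B}})=\tilde{\mc{B}}^\dg*_N\tilde{\mc{B}}$, both of which are Hermitian, yielding equations (3) and (4); the same substitution reduces equations (1) and (2) to repeated applications of $\tilde{\mc{A}}*_N\tilde{\mc{A}}^\dg*_M\tilde{\mc{A}}=\tilde{\mc{A}}$ and $\tilde{\mc{B}}^\dg*_N\tilde{\mc{B}}*_L\tilde{\mc{B}}^\dg=\tilde{\mc{B}}^\dg$. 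Uniqueness of the Moore--Penrose inverse (Theorem~\ref{MPIunique}) then identifies $\mc{Y}$ with $(\tilde{\mc{A}}*_N\tilde{\mc{B}})^\dg$, and substituting back into the two displayed expressions above finishes the proof.

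I expect the only genuine difficulty to be the bookkeeping of the Einstein subscripts $*_M,*_N,*_L$ together with the fact that $\mc{N}^{-1/2}$ acts on the $K$-side while $\mc{M}^{1/2}$ acts on the $I$-side; conceptually the argument is Greville's criterion $\mathfrak{R}(B)=\mathfrak{R}(A^*)$ for the ordinary reverse order law, pulled through the invertible square roots of the weight tensors.
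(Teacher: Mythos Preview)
Your proof is correct, and it reaches the conclusion by a more direct route than the paper.

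Both arguments make the same reduction---pass from the weighted inverse to the ordinary Moore--Penrose inverse via \eqref{2.17}, setting $\tilde{\mc{A}}=\mc{M}^{1/2}*_M\mc{A}$ and $\tilde{\mc{B}}=\mc{B}*_L\mc{N}^{-1/2}$---and both extract from the hypothesis the single identity $\tilde{\mc{B}}*_L\tilde{\mc{B}}^\dg=\tilde{\mc{A}}^\dg*_M\tilde{\mc{A}}$. The difference is in how the unweighted reverse order law $(\tilde{\mc{A}}*_N\tilde{\mc{B}})^\dg=\tilde{\mc{B}}^\dg*_N\tilde{\mc{A}}^\dg$ is then established. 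You verify the four Penrose conditions directly, which is short and self-contained. The paper instead first proves, \emph{without} the range hypothesis, the general factorisation
\[
(\mc{A}*_N\mc{B})^\dg=(\mc{A}^\dg*_M\mc{A}*_N\mc{B})^\dg*_N(\mc{A}*_N\mc{B}*_L\mc{B}^\dg)^\dg
\]
via the range-space machinery of Lemmas~\ref{ID} and~\ref{IDR1}, and only then invokes the hypothesis to collapse the two inner factors to $\mc{B}$ and $\mc{A}$. This intermediate identity (labelled~(3.8.1) in the paper) is reused later in the proof of Theorem on page where equation~(3.8.1) is referenced again, so the paper's detour buys a lemma of independent utility; your approach buys brevity and avoids the range-space apparatus entirely for this particular theorem.
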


\begin{proof}
Let $\mc{X} =\mc{A}^\dag *_M\mc{A}*_N\mc{B} $ and $\mc{Y} =\mc{A}*_N\mc{B}*_L\mc{B}^\dag$. By using Lemma \ref{ID}(c), we get
\begin{eqnarray*}
 &&\mathfrak{R}[(\mc{X}^\dag)^*] 
 = \mathfrak{R}[(\mc{A}^\dag *_M\mc{A})^* *_N\mc{B}] \subseteq 
 \mathfrak{R}(\mc{A}^*)\\ 
 &&\hspace{3cm}
 \textnormal{~~and~~}\mathfrak{R}(\mc{Y}^\dag) 
 =\mathfrak{R}(\mc{B}*_L\mc{B}^\dag *_N\mc{A}^*) \subseteq \mathfrak{R}(\mc{B}). 
\end{eqnarray*}
 Similarly, from Eq. \eqref{lemma37} we have 
 \begin{eqnarray*}
 \mathfrak{R}(\mc{X}^*) =\mathfrak{R}(\mc{B}^* *_N\mc{A}^* *_M(\mc{A}^*)^\dag) =\mathfrak{R}[(\mc{A}*_N\mc{B})^*] ~~\textnormal{
 and}~~  \mathfrak{R}(\mc{Y}) =\mathfrak{R}(\mc{A}*_N\mc{B}).
 \end{eqnarray*}
 Further, from Lemma \ref{ID}[(a), (b)] and Lemma \ref{IDR1}[(a), (b)], we obtain 
 \begin{eqnarray*}
 \mc{X}^\dag *_N\mc{Y}^\dag =  \mc{X}^\dag *_N\mc{X}*_L\mc{B}^\dag *_N\mc{Y}^\dag =  (\mc{A}*_N\mc{B})^\dag *_M\mc{Y}*_N\mc{Y}^\dag = (\mc{A}*_N\mc{B})^\dag,
 \end{eqnarray*}
 i.e., \begin{equation}\label{3.8.1}
     (\mc{A}*_N\mc{B})^\dag = (\mc{A}^\dg\m\mc{A}\n\mc{B})^\dg\n(\mc{A}\n\mc{B}\1\mc{B}^\dg)^\dg.
 \end{equation}
 Let $\mc{A}_1 = \mc{M}^{1/2}\m\mc{A} $ and $ \mc{B}_1 = \mc{B}\1\mc{N}^{-1/2} $.
 Using the Lemma \ref{42}[(a),(b)], we get 
  \begin{eqnarray*}
 \mc{X} =\mc{A}_1^\dag *_M\mc{A}_1*_N\mc{B} ~~\textnormal{and}~~ \mc{Y} =\mc{A}*_N\mc{B}_1*_L\mc{B}_1^\dag.
  \end{eqnarray*}
 Now,  replacing $ \mc{A}$ and $\mc{B} $ by $ \mc{A}_1 $  and $\mc{B}_1 $  respectively on Eq.(\ref{3.8.1}), we get
   \begin{eqnarray*}
  (\mc{A}_1\n\mc{B}_1)^\dg 
 = (\mc{X}\1 \mc{N}^{-1/2})^\dg \n (\mc{M}^{1/2}\m \mc{Y})^\dg.
 \end{eqnarray*}
 Thus from corollary~\ref{3.4}, we can conclude 
 \begin{eqnarray*}
 (\mc{A}\n\mc{B})^\dg_{\mc{M},\mc{N}} = \mc{N}^{-1/2}\1 (\mc{A}_1\n\mc{B}_1)^\dg\m \mc{M}^{1/2} = \mc{X}^\dg_{\mc{I}_N,\mc{N}}\n \mc{Y}^\dg_{\mc{M},\mc{I}_N}.
 \end{eqnarray*}
 From the given condition and Lemma~\ref{ID}[(b),(c)], we have $ \mc{B}\1\mc{B}^\dg = \mc{A}^\dg\m \mc{A} $, i.e.,
 \begin{eqnarray*}
  \mc{A} = \mc{A}\n \mc{B}\1 \mc{B}^\dg = \mc{Y} ~~\textnormal{and}~~  \mc{B} = \mc{A}^\dg \m\mc{A}\n\mc{B} = \mc{X}.
 \end{eqnarray*}
 Hence, $ (\mc{A}\n\mc{B})^\dg_{\mc{M},\mc{N}} = \mc{B}^\dg_{\mc{I}_N,\mc{N}}\n \mc{A}^\dg_{\mc{M},\mc{I}_N} $.
 \end{proof}

Further, using Theorem~3.30 in  \cite{PanRad18} one can write a necessary and sufficient condition for reverse order law for arbitrary-order tensors, i.e., for $\mc{A}\in \mathbb{C}^{I_1\times\cdots\times I_M \times J_1\times\cdots\times J_N }$ and $\mc{B}\in \mathbb{C}^{J_1\times\cdots\times J_N \times K_1\times\cdots\times K_L}$. Then $(\mc{A}\n\mc{B})^{\dg} = \mc{B}^{\dg} \n \mc{A}^{\dg}$ if and only if
\begin{equation*}\label{eq19}
\mc{A}^{\dg}\m\mc{A}\n\mc{B}\1\mc{B}^* \n\mc{A}^* = \mc{B}\1\mc{B}^* \n \mc{A}^*, 
~~and
~~
\mc{B}\1\mc{B}^{\dg}\n\mc{A}^*\m\mc{A}\n\mc{B} = \mc{A}^* \m \mc{A} \n \mc{B}.
\end{equation*}
Now, utilizing the above result and the fact of Lemma \ref{ID}[(a),(c)], we conclude a beautiful result for necessary and sufficient condition for Moore-Penrose inverse of arbitrary-order tensor, as follows.

\begin{lemma}\label{RV1}
Let $\mc{A}\in \mathbb{C}^{I_1\times\cdots\times I_M \times J_1\times\cdots\times J_N }$ and $\mc{B}\in \mathbb{C}^{J_1\times\cdots\times J_N \times K_1\times\cdots\times K_L}$.
The Reverse order law hold for Moore-Penrose inverse, i.e.,$ (\mc{A}*_N\mc{B})^\dag = \mc{B}^\dag *_N\mc{A}^\dag $ 
if and only if~ \\
$ \mathfrak{R}(\mc{A}^* *_M\mc{A}*_N\mc{B})\subseteq \mathfrak{R}(\mc{B}) $ and $  \mathfrak{R}(\mc{B}*_L\mc{B}^* *_N\mc{A}^*) \subseteq \mathfrak{R}(\mc{A}^*)$.
\end{lemma}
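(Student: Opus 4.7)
The strategy is to read off the equivalence directly from the characterization of Theorem~3.30 of \cite{PanRad18} that was just recalled: $(\mc{A}\n\mc{B})^{\dg}=\mc{B}^{\dg}\n\mc{A}^{\dg}$ holds iff the two operator identities
\begin{align*}
&\mc{A}^{\dg}\m\mc{A}\n\mc{B}\1\mc{B}^{*}\n\mc{A}^{*}=\mc{B}\1\mc{B}^{*}\n\mc{A}^{*},\\
&\mc{B}\1\mc{B}^{\dg}\n\mc{A}^{*}\m\mc{A}\n\mc{B}=\mc{A}^{*}\m\mc{A}\n\mc{B}
\end{align*}
hold, and to show that each of these identities is nothing more than a range inclusion in disguise, via Lemma~\ref{ID}(a) and~(c).

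The second identity is the easier one: it already has the canonical form $\mc{B}\1\mc{B}^{\dg}\n\mc{Y}=\mc{Y}$ with $\mc{Y}:=\mc{A}^{*}\m\mc{A}\n\mc{B}$, so Lemma~\ref{ID}(a) applied with the role of $\mc{A}$ played by $\mc{B}$ converts it immediately into $\mathfrak{R}(\mc{A}^{*}\m\mc{A}\n\mc{B})\subseteq\mathfrak{R}(\mc{B})$, which is the second hypothesis in the statement. For the first identity I would set $\mc{X}:=\mc{B}\1\mc{B}^{*}\n\mc{A}^{*}$ and reapply Lemma~\ref{ID}(a), this time with $\mc{A}^{\dg}$ in place of $\mc{A}$---using the involution $(\mc{A}^{\dg})^{\dg}=\mc{A}$ built into the four Moore-Penrose axioms---to deduce $\mc{A}^{\dg}\m\mc{A}\n\mc{X}=\mc{X}\Leftrightarrow\mathfrak{R}(\mc{X})\subseteq\mathfrak{R}(\mc{A}^{\dg})$. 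Lemma~\ref{ID}(c) then rewrites the right-hand side as $\mathfrak{R}(\mc{A}^{\dg})=\mathfrak{R}(\mc{A}^{*})$, so the first identity becomes precisely $\mathfrak{R}(\mc{B}\1\mc{B}^{*}\n\mc{A}^{*})\subseteq\mathfrak{R}(\mc{A}^{*})$, matching the first hypothesis.

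The one place to be careful, and what I expect to be the main source of slips, is the bookkeeping of the contraction subscripts when replacing $\mc{A}$ with $\mc{A}^{\dg}$ in Lemma~\ref{ID}(a): since $\mc{A}^{\dg}$ has the $J$-block as its ``first'' index group and the $I$-block as its ``second'', the subscripts $M$ and $N$ of the Einstein product are interchanged, so one has to verify that the transformed identity really reads $\mc{A}^{\dg}\m\mc{A}\n\mc{X}=\mc{X}$ rather than a wrongly permuted version. Once this verification is in place, the conjunction of the two range inclusions together with the Panigrahy--Mishra characterization yields the stated necessary and sufficient condition.
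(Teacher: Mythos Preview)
Your proposal is correct and follows exactly the route the paper takes: the paper derives the lemma in one sentence from the Panigrahy--Mishra characterization (Theorem~3.30 of \cite{PanRad18}) together with Lemma~\ref{ID}(a),(c), and you have supplied precisely those details, including the substitution of $\mc{A}^{\dg}$ for $\mc{A}$ in Lemma~\ref{ID}(a) with $(\mc{A}^{\dg})^{\dg}=\mc{A}$. The only cosmetic slip is that what you call ``first hypothesis'' and ``second hypothesis'' are interchanged relative to the order in which the statement lists them.
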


The primary result of this paper is presented next under the impression of the properties of range space of arbitrary-order tensor. 

\begin{theorem}\label{RV2}
Let $\mc{A} \in \mathbb{C}^{I_1\times\cdots\times I_M \times J_1
\times\cdots\times J_N}, \mc{B} \in \mathbb{C}^{J_1\times\cdots\times J_N \times K_1
\times\cdots\times K_L}$. Let $ \mc{M} \in \mathbb{C}^{I_1\times\cdots\times I_M \times I_1 \times\cdots\times I_M}$, $\mc{N} \in \mathbb{C}^{K_1\times\cdots\times K_L \times K_1 \times\cdots\times K_L} $ and $ \mc{P} \in \mathbb{C}^{J_1\times\cdots\times J_N \times J_1 \times\cdots\times J_N} $
 are three Hermitian positive definite tensors. Then 
 \begin{equation*}
      (\mc{A}*_N\mc{B})_{\mc{M},\mc{N}} ^\dag = \mc{B}_{\mc{P},\mc{N}}^\dag *_N\mc{A}_{\mc{M},\mc{P}}  ^\dg
 \end{equation*}
     if and only if 
\begin{equation*}
\mathfrak{R}(\mc{A}^{\#}_{\mc{P},\mc{M}}*_M\mc{A}*_N\mc{B}) \subseteq \mathfrak{R}(\mc{B}) \textnormal{~~ and ~~} \mathfrak{R}(\mc{B}*_L\mc{B}^{\#}_{\mc{N},\mc{P}}*_N\mc{A}^{\#}_{\mc{P},\mc{M}}) \subseteq \mathfrak{R}(\mc{A}^{\#}_{\mc{P},\mc{M}}).
\end{equation*} 
\end{theorem}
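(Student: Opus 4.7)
The plan is to reduce the weighted reverse order law to the ordinary Moore-Penrose reverse order law through a symmetric rescaling, invoke Lemma \ref{RV1}, and finally translate the resulting range conditions back in terms of the weighted conjugate transpose.

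First I would introduce the auxiliary tensors
\[
\mc{A}_1 = \mc{M}^{1/2} *_M \mc{A} *_N \mc{P}^{-1/2}, \qquad \mc{B}_1 = \mc{P}^{1/2} *_N \mc{B} *_L \mc{N}^{-1/2}.
\]
Three applications of formula \eqref{2.17} give
\[
(\mc{A}*_N\mc{B})^\dg_{\mc{M},\mc{N}} = \mc{N}^{-1/2} *_L (\mc{A}_1 *_N \mc{B}_1)^\dg *_M \mc{M}^{1/2},
\]
together with
\[
\mc{B}^\dg_{\mc{P},\mc{N}} *_N \mc{A}^\dg_{\mc{M},\mc{P}} = \mc{N}^{-1/2} *_L \mc{B}_1^\dg *_N \mc{P}^{1/2} *_N \mc{P}^{-1/2} *_N \mc{A}_1^\dg *_M \mc{M}^{1/2} = \mc{N}^{-1/2} *_L \mc{B}_1^\dg *_N \mc{A}_1^\dg *_M \mc{M}^{1/2},
\]
in which the inner factors $\mc{P}^{\pm 1/2}$ cancel. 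Since $\mc{M}^{1/2}$ and $\mc{N}^{1/2}$ are invertible, cancelling them from both sides shows that $(\mc{A}*_N\mc{B})^\dg_{\mc{M},\mc{N}} = \mc{B}^\dg_{\mc{P},\mc{N}} *_N \mc{A}^\dg_{\mc{M},\mc{P}}$ is equivalent to the ordinary reverse order law $(\mc{A}_1 *_N \mc{B}_1)^\dg = \mc{B}_1^\dg *_N \mc{A}_1^\dg$.

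By Lemma \ref{RV1}, this last equality is equivalent to the pair of inclusions $\mathfrak{R}(\mc{A}_1^* *_M \mc{A}_1 *_N \mc{B}_1) \subseteq \mathfrak{R}(\mc{B}_1)$ and $\mathfrak{R}(\mc{B}_1 *_L \mc{B}_1^* *_N \mc{A}_1^*) \subseteq \mathfrak{R}(\mc{A}_1^*)$. Expanding the first expression, and using Definition~\ref{210} to read $\mc{A}^\#_{\mc{P},\mc{M}} = \mc{P}^{-1} *_N \mc{A}^* *_M \mc{M}$ out of the middle, produces $\mc{A}_1^* *_M \mc{A}_1 *_N \mc{B}_1 = \mc{P}^{1/2} *_N \mc{A}^\#_{\mc{P},\mc{M}} *_M \mc{A} *_N \mc{B} *_L \mc{N}^{-1/2}$, while $\mc{B}_1 = \mc{P}^{1/2} *_N \mc{B} *_L \mc{N}^{-1/2}$. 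Since left-multiplication by the invertible tensor $\mc{P}^{1/2}$ preserves range inclusions in both directions, and right-multiplication by the invertible $\mc{N}^{-1/2}$ leaves ranges unchanged, this inclusion reduces precisely to $\mathfrak{R}(\mc{A}^\#_{\mc{P},\mc{M}} *_M \mc{A} *_N \mc{B}) \subseteq \mathfrak{R}(\mc{B})$. An entirely parallel computation for $\mc{B}_1 *_L \mc{B}_1^* *_N \mc{A}_1^*$, this time collecting both $\mc{B}^\#_{\mc{N},\mc{P}} = \mc{N}^{-1} *_L \mc{B}^* *_N \mc{P}$ and $\mc{A}^\#_{\mc{P},\mc{M}}$ from the interior, yields $\mathfrak{R}(\mc{B} *_L \mc{B}^\#_{\mc{N},\mc{P}} *_N \mc{A}^\#_{\mc{P},\mc{M}}) \subseteq \mathfrak{R}(\mc{A}^\#_{\mc{P},\mc{M}})$, which matches the second stated condition.

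I do not anticipate any conceptual obstacle: the argument is essentially a reduction via symmetric similarity, followed by a re-encoding of the range conditions. The only real difficulty is the bookkeeping of the six factors $\mc{M}^{\pm 1/2}, \mc{N}^{\pm 1/2}, \mc{P}^{\pm 1/2}$, checking carefully that they combine to reproduce exactly the weighted conjugate transposes and that no stray scaling factor is left behind. The elementary fact that left-multiplication by an invertible tensor preserves range inclusions (and right-multiplication leaves ranges unchanged), which is the sole tool needed for the final step, follows immediately from Lemma~\ref{lemma25} applied after cancellation by the invertible tensor.
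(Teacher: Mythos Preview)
Your proposal is correct and follows essentially the same route as the paper: introduce the rescaled tensors $\tilde{\mc{A}}=\mc{M}^{1/2}*_M\mc{A}*_N\mc{P}^{-1/2}$ and $\tilde{\mc{B}}=\mc{P}^{1/2}*_N\mc{B}*_L\mc{N}^{-1/2}$, use \eqref{2.17} to reduce the weighted reverse order law to the ordinary one, invoke Lemma~\ref{RV1}, and then peel off the invertible square-root factors to recover the range conditions in terms of $\mc{A}^{\#}_{\mc{P},\mc{M}}$ and $\mc{B}^{\#}_{\mc{N},\mc{P}}$. The only difference is cosmetic (your $\mc{A}_1,\mc{B}_1$ are the paper's $\tilde{\mc{A}},\tilde{\mc{B}}$), and your explicit remark about why invertible left and right factors preserve range inclusions is a welcome clarification.
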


\begin{proof}
From equation \eqref{2.17}, we have 
$(\mc{A}*_N\mc{B})_{\mc{M},\mc{N}} ^\dag = \mc{B}_{\mc{P},\mc{N}}^\dag *_N\mc{A}_{\mc{M},\mc{P}}^\dag $ if and only if 
\begin{eqnarray*}
 &&\mc{N}^{-1/2}*_L(\mc{M}^{1/2}*_M\mc{A}*_N\mc{B}*_L\mc{N}^{-1/2})^\dag *_M\mc{M}^{1/2} \\
&&= \mc{N}^{-1/2}*_L(\mc{P}^{1/2}*_N\mc{B}*_L\mc{N}^{-1/2})^\dag *_N\mc{P}^{1/2} *_N\mc{P}^{-1/2}*_N(\mc{M}^{1/2}*_M\mc{A}*_N\mc{P}^{-1/2})^\dag *_M\mc{M}^{1/2},
\end{eqnarray*}
is equivalent to,  if and only if   
\begin{equation*}
(\tilde{\mc{A}}*_N\tilde{\mc{B}})^\dag = \tilde{\mc{B}}^\dag *_N\tilde{\mc{A}}^\dag,
\end{equation*}
where   $ \tilde{\mc{A}} = \mc{M}^{1/2}*_M\mc{A}*_N\mc{P}^{-1/2}  $ and $ \tilde{\mc{B}}= \mc{P}^{1/2}*_N\mc{B}*_L\mc{N}^{-1/2} $. From Lemma \ref{RV1}, we have 
\begin{equation*}
(\mc{A}*_N\mc{B})_{\mc{M},\mc{N}} ^\dag = \mc{B}_{\mc{P},\mc{N}}^\dag *_N\mc{A}_{\mc{M},\mc{P}}^\dag
\end{equation*}
if and only if 
\begin{eqnarray}\label{29}
\mathfrak{R}(\tilde{\mc{A}}^* *_M\tilde{\mc{A}}*_N\tilde{\mc{B}}) \subseteq \mathfrak{R}(\tilde{\mc{B}}) \textnormal{~~and~~} \mathfrak{R}(\tilde{\mc{B}}*_L\tilde{\mc{B}}^* *_N\tilde{\mc{A}}^*) \subseteq \mathfrak{R}(\tilde{\mc{A}}^*),
\end{eqnarray}
which equivalently if and only if
\begin{eqnarray*}\label{rv1}
 \mathfrak{R}(\mc{P}^{1/2}*_N\mc{A}^{\#}_{\mc{P},\mc{M}}*_M\mc{A}*_N\mc{B}*_L\mc{N}^{-1/2}) \subseteq \mathfrak{R}(\mc{P}^{1/2}*_N\mc{B}*_L\mc{N}^{-1/2})\\
~~ and~~ \mathfrak{R}(\mc{P}^{1/2}*_N\mc{B}*_L\mc{B}^{\#}_{\mc{N},\mc{P}}*_N\mc{A}^{\#}_{\mc{P},\mc{M}}*_M\mc{M}^{-1/2}) \subseteq \mathfrak{R}(\mc{P}^{1/2}*_N\mc{A}^{\#}_{\mc{P},\mc{M}}*_M\mc{M}^{-1/2}). 
\end{eqnarray*}
Hence, $ (\mc{A}*_N\mc{B})_{\mc{M},\mc{N}} ^\dag = \mc{B}_{\mc{P},\mc{N}}^\dag *_N\mc{A}_{\mc{M},\mc{P}}  ^\dg $ if and only if
\begin{eqnarray*}
\mathfrak{R}(\mc{A}^{\#}_{\mc{P},\mc{M}}*_M\mc{A}*_N\mc{B}) \subseteq \mathfrak{R}(\mc{B}) \textnormal{ ~~and~~}  \mathfrak{R}(\mc{B}*_L\mc{B}^{\#}_{\mc{N},\mc{P}}*_N\mc{A}^{\#}_{\mc{P},\mc{M}}) \subseteq \mathfrak{R}(\mc{A}^{\#}_{\mc{P},\mc{M}}).
\end{eqnarray*}
This completes the proof.
\end{proof}

As a corollary to Theorem \ref{RV2}, we present another reverse order law for the weighted Moore-Penrose inverse of arbitrary-order tensor.

\begin{corollary}
Let $\mc{A} \in \mathbb{C}^{I_1\times\cdots\times I_M \times J_1
\times\cdots\times J_N}, \mc{B} \in \mathbb{C}^{J_1\times\cdots\times J_N \times K_1
\times\cdots\times K_L}$. Let $ \mc{M} \in \mathbb{C}^{I_1\times\cdots\times I_M \times I_1 \times\cdots\times I_M}$, $\mc{N} \in \mathbb{C}^{K_1\times\cdots\times K_L \times K_1 \times\cdots\times K_L} $  and $ \mc{P} \in \mathbb{C}^{J_1\times\cdots\times J_N \times J_1 \times\cdots\times J_N} $
 are three Hermitian positive definite tensors. Then 
 \begin{equation*}
(\mc{A}*_N\mc{B})_{\mc{M},\mc{N}} ^\dag = \mc{B}_{\mc{P},\mc{N}}^\dag *_N\mc{A}_{\mc{M},\mc{P}}^\dag
 \end{equation*}
if and only if
\begin{eqnarray*}
 \mc{A}_{\mc{M},\mc{P}}^\dag *_M\mc{A}*_N\mc{B}*_L\mc{B}^{\#}_{\mc{N},\mc{P}}*_N\mc{A}^{\#}_{\mc{P},\mc{M}} = \mc{B}*_L\mc{B}^{\#}_{\mc{N},\mc{P}}*_N\mc{A}^{\#}_{\mc{P},\mc{M}}\\  \textnormal{and~~}
 \mc{B}*_L\mc{B}_{\mc{P},\mc{N}}^\dag *_N\mc{A}^{\#}_{\mc{P},\mc{M}}*_M\mc{A}*_N\mc{B} = \mc{A}^{\#}_{\mc{P},\mc{M}}*_M\mc{A}*_N\mc{B}
\end{eqnarray*}

\end{corollary}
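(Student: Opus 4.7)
The plan is to follow the strategy of the proof of Theorem~\ref{RV2}, but to replace the range-inclusion criterion of Lemma~\ref{RV1} by the equivalent equational criterion stated immediately before Lemma~\ref{RV1}. Introducing the auxiliary tensors $\tilde{\mc{A}} = \mc{M}^{1/2}\m\mc{A}\n\mc{P}^{-1/2}$ and $\tilde{\mc{B}} = \mc{P}^{1/2}\n\mc{B}\1\mc{N}^{-1/2}$ from that proof, the identity $(\mc{A}\n\mc{B})_{\mc{M},\mc{N}}^\dg = \mc{B}_{\mc{P},\mc{N}}^\dg \n \mc{A}_{\mc{M},\mc{P}}^\dg$ is equivalent to the unweighted reverse-order law $(\tilde{\mc{A}}\n\tilde{\mc{B}})^\dg = \tilde{\mc{B}}^\dg \n \tilde{\mc{A}}^\dg$, which by the equational criterion holds if and only if $\tilde{\mc{A}}^\dg \m \tilde{\mc{A}} \n \tilde{\mc{B}} \1 \tilde{\mc{B}}^* \n \tilde{\mc{A}}^* = \tilde{\mc{B}} \1 \tilde{\mc{B}}^* \n \tilde{\mc{A}}^*$ and $\tilde{\mc{B}} \1 \tilde{\mc{B}}^\dg \n \tilde{\mc{A}}^* \m \tilde{\mc{A}} \n \tilde{\mc{B}} = \tilde{\mc{A}}^* \m \tilde{\mc{A}} \n \tilde{\mc{B}}$.

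Next I would translate each constituent factor using Eq.~\eqref{2.17} and Definition~\ref{210}. Direct computation gives the sandwich formulas $\tilde{\mc{A}}^\dg = \mc{P}^{1/2}\n\mc{A}_{\mc{M},\mc{P}}^\dg\m\mc{M}^{-1/2}$, $\tilde{\mc{A}}^* = \mc{P}^{1/2}\n\mc{A}^{\#}_{\mc{P},\mc{M}}\m\mc{M}^{-1/2}$, $\tilde{\mc{B}}^\dg = \mc{N}^{1/2}\1\mc{B}_{\mc{P},\mc{N}}^\dg\n\mc{P}^{-1/2}$, and $\tilde{\mc{B}}^* = \mc{N}^{-1/2}\1\mc{B}^*\n\mc{P}^{1/2}$. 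Multiplying these in pairs with $\tilde{\mc{A}}$ and $\tilde{\mc{B}}$ and cancelling the matching half-powers of $\mc{M}$ and $\mc{N}$, each inner block takes a common sandwich form: $\tilde{\mc{A}}^\dg\m\tilde{\mc{A}} = \mc{P}^{1/2}\n(\mc{A}_{\mc{M},\mc{P}}^\dg\m\mc{A})\n\mc{P}^{-1/2}$, $\tilde{\mc{A}}^*\m\tilde{\mc{A}} = \mc{P}^{1/2}\n(\mc{A}^{\#}_{\mc{P},\mc{M}}\m\mc{A})\n\mc{P}^{-1/2}$, $\tilde{\mc{B}}\1\tilde{\mc{B}}^* = \mc{P}^{1/2}\n(\mc{B}\1\mc{B}^{\#}_{\mc{N},\mc{P}})\n\mc{P}^{-1/2}$, and $\tilde{\mc{B}}\1\tilde{\mc{B}}^\dg = \mc{P}^{1/2}\n(\mc{B}\1\mc{B}_{\mc{P},\mc{N}}^\dg)\n\mc{P}^{-1/2}$.

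Substituting these identifications into the two displayed equations above, each side takes the form $\mc{P}^{1/2}\n(\cdots)\m\mc{M}^{-1/2}$ for the first identity and $\mc{P}^{1/2}\n(\cdots)\1\mc{N}^{-1/2}$ for the second. Left-multiplying each side by $\mc{P}^{-1/2}$ and right-multiplying by $\mc{M}^{1/2}$ (respectively $\mc{N}^{1/2}$) cancels the outer invertible factors and produces exactly the two equations asserted in the corollary.

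The main obstacle is the careful bookkeeping of the half-powers $\mc{M}^{\pm 1/2}$, $\mc{P}^{\pm 1/2}$, $\mc{N}^{\pm 1/2}$ through the four long products and the verification that the sandwich pattern $\mc{P}^{1/2}\n(\,\cdot\,)\n\mc{P}^{-1/2}$ emerges uniformly for each inner block; once the four sandwich identities are in hand, the remainder of the argument is a mechanical cancellation. An alternative route is to start directly from the range-inclusion form in Theorem~\ref{RV2} and apply Lemma~\ref{ID}(a), after first showing that $\mc{B}\1\mc{B}_{\mc{P},\mc{N}}^\dg$ is an idempotent with range $\mathfrak{R}(\mc{B})$ and that $\mc{A}_{\mc{M},\mc{P}}^\dg\m\mc{A}$ acts as the identity on $\mathfrak{R}(\mc{A}^{\#}_{\mc{P},\mc{M}})$; this avoids the half-power gymnastics at the cost of a preliminary projector lemma.
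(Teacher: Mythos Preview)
Your proof is correct and follows essentially the same route as the paper: reduce the weighted reverse-order law to the unweighted one for $\tilde{\mc{A}},\tilde{\mc{B}}$ via Eq.~\eqref{2.17}, invoke the known unweighted criterion, and translate back by cancelling the invertible half-power factors. The only cosmetic difference is that the paper cites the range-inclusion form Eq.~\eqref{29} together with Lemma~\ref{ID}(a) (exactly the ``alternative route'' you mention) rather than the equational display before Lemma~\ref{RV1}, and expands the half-powers longhand instead of packaging them as your $\mc{P}^{1/2}\n(\cdot)\n\mc{P}^{-1/2}$ sandwiches.
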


\begin{proof}
From Theorem~\ref{RV2}, Eq.\eqref{29} and Lemma~\ref{ID}(a), we have 
$ (\mc{A}*_N\mc{B})_{\mc{M},\mc{N}} ^\dag = \mc{B}_{\mc{P},\mc{N}}^\dag *_N\mc{A}_{\mc{M},\mc{P}}^\dag  $ if and only if
\begin{eqnarray*}
(\mc{P}^{1/2}*_N\mc{B}*_L\mc{N}^{-1/2})*_L&&\hspace{-.3cm}(\mc{P}^{1/2}*_N\mc{B}*_L\mc{N}^{-1/2})^\dag*_N\mc{P}^{1/2}*_N\mc{A}^{\#}_{\mc{P},\mc{M}}*_M\mc{A}*_N\mc{B}*_L\mc{N}^{-1/2} \\
&=& \mc{P}^{1/2}*_N\mc{A}^{\#}_{\mc{P},\mc{M}}*_M\mc{A}*_N\mc{B}*_L\mc{N}^{-1/2}
\end{eqnarray*}
and

\begin{eqnarray*} (\mc{P}^{1/2}*_N\mc{A}^{\#}_{\mc{P},\mc{M}}*_M\mc{M}^{-1/2})*_N&&\hspace{-.3cm}(\mc{P}^{1/2}*_N\mc{A}^{\#}_{\mc{P},\mc{M}}*_M\mc{M}^{-1/2})^\dag *_N\mc{P}^{1/2}*_N\mc{B}*_L\mc{B}^{\#}_{\mc{N},\mc{P}}*_N\mc{A}^{\#}_{\mc{M},\mc
{P}}*_M\mc{M}^{-1/2}\\
&=& \mc{P}^{1/2}*_N\mc{B}*_L\mc{B}^{\#}_{\mc{N},\mc{P}}*_N\mc{A}^{\#}_{\mc{P},\mc{M}}*_M\mc{M}^{-1/2},
\end{eqnarray*}
\textnormal{i.e., if and only if}
\begin{eqnarray*}
&&\mc{B}*_L\mc{B}_{\mc{P},\mc{N}}^\dag *_N\mc{A}^{\#}_{\mc{P},\mc{M}}*_M\mc{A}*_N\mc{B} 
= \mc{A}^{\#}_{\mc{P},\mc{M}}*_M\mc{A}*_N\mc{B} ~~~\textnormal{and} \\
&&[(\mc{M}^{1/2}*_M\mc{A}*_N\mc{P}^{-1/2})^\dag *_M(\mc{M}^{1/2}*_M\mc{A}*_N\mc{P}^{-1/2})]^* *_N\mc{P}^{1/2}*_N\mc{B}*_L\mc{B}^{\#}_{\mc{N},\mc{P}}*_N\mc{A}^{\#}_{\mc{M},\mc{P}}*_M\mc{M}^{-1/2}\\
&&\hspace{2cm}= \mc{P}^{1/2}*_N\mc{B}*_L\mc{B}^{\#}_{\mc{N},\mc{P}}*_N\mc{A}^{\#}_{\mc{P},\mc{M}}*_M\mc{M}^{-1/2},
\end{eqnarray*}
i.e., if and only if 
\begin{eqnarray*}
&&\mc{B}*_L\mc{B}_{\mc{P},\mc{N}}^\dag *_N\mc{A}^{\#}_{\mc{P},\mc{M}}*_M\mc{A}*_N\mc{B} 
= \mc{A}^{\#}_{\mc{P},\mc{M}}*_M\mc{A}*_N\mc{B}\\
&& \textnormal{~~and~~}
\mc{A}_{\mc{M},\mc{P}}^\dag *_M\mc{A}*_N\mc{B}*_L\mc{B}^{\#}_{\mc{N},\mc{P}}*_N\mc{A}^{\#}_{\mc{P},\mc{M}} = \mc{B}*_L\mc{B}^{\#}_{\mc{N},\mc{P}}*_N\mc{A}^{\#}_{\mc{P},\mc{M}}.
\end{eqnarray*}
This completes the proof.
\end{proof}


\begin{theorem}
Let $ \mc{A} \in \mathbb{C}^{I_1\times\cdots\times I_M \times J_1
\times\cdots\times J_N}, \mc{B} \in \mathbb{C}^{J_1\times\cdots\times J_N \times K_1
\times\cdots\times K_L}$. Let $ \mc{M} \in \mathbb{C}^{I_1\times\cdots\times I_M \times I_1 \times\cdots\times I_M}$, $\mc{N} \in \mathbb{C}^{K_1\times\cdots\times K_L \times K_1 \times\cdots\times K_L} $ and $ \mc{P} \in \mathbb{C}^{J_1\times\cdots\times J_N \times J_1 \times\cdots\times J_N} $ are positive definite Hermitian tensors. Then 
\begin{equation*}
(\mc{A}\n\mc{B})^\dg_{\mc{M},\mc{N}} = \mc{B}^\dg_{\mc{P},\mc{N}}\n \mc{A}^\dg_{\mc{M},\mc{P}}
\end{equation*}
if and only if 
\begin{equation*}
(\mc{A}^\dg_{\mc{M},\mc{P}}\m \mc{A}\n\mc{B})^\dg_{\mc{P},\mc{N}} = \mc{B}^\dg_{\mc{P},\mc{N}}\n\mc{A}^\dg_{\mc{M},\mc{P}}\m\mc{A}
\textnormal{~~and~~} (\mc{A}\n\mc{B}\1\mc{B}^\dg_{\mc{P},\mc{N}})^\dg_{\mc{M},\mc{P}} = \mc{B}\1    \mc{B}^\dg_{\mc{P},\mc{N}}\n\mc{A}^\dg_{\mc{M},\mc{P}}.
\end{equation*}
\end{theorem}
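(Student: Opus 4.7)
The plan is to reduce the weighted statement to the ordinary Moore-Penrose setting and then verify the four defining axioms of the Moore-Penrose inverse directly. Following the device used in the proof of Theorem~\ref{RV2}, introduce $\tilde{\mc{A}} = \mc{M}^{1/2}\m\mc{A}\n\mc{P}^{-1/2}$ and $\tilde{\mc{B}} = \mc{P}^{1/2}\n\mc{B}\1\mc{N}^{-1/2}$. Using Eq.~\eqref{2.17} together with Corollary~\ref{3.4} and Lemma~\ref{42}, a direct calculation (of exactly the same kind performed in the proof of Theorem~\ref{RV2}) shows that the three weighted equations in the statement are equivalent, respectively, to the unweighted identities $(\tilde{\mc{A}}\n\tilde{\mc{B}})^\dg = \tilde{\mc{B}}^\dg\n\tilde{\mc{A}}^\dg$, $(\tilde{\mc{A}}^\dg\m\tilde{\mc{A}}\n\tilde{\mc{B}})^\dg = \tilde{\mc{B}}^\dg\n\tilde{\mc{A}}^\dg\m\tilde{\mc{A}}$, and $(\tilde{\mc{A}}\n\tilde{\mc{B}}\1\tilde{\mc{B}}^\dg)^\dg = \tilde{\mc{B}}\1\tilde{\mc{B}}^\dg\n\tilde{\mc{A}}^\dg$. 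Thus I may assume the weight tensors are identities, and drop the tildes henceforth.

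For the ``only if'' direction, assume $(\mc{A}\n\mc{B})^\dg = \mc{B}^\dg\n\mc{A}^\dg$, and verify that $\mc{Y}:=\mc{B}^\dg\n\mc{A}^\dg\m\mc{A}$ satisfies the four axioms of Definition~\ref{defmpi} for the target $\mc{X}:=\mc{A}^\dg\m\mc{A}\n\mc{B}$. Writing $\mc{P}_1=\mc{A}^\dg\m\mc{A}$ and $\mc{Q}_1=\mc{B}\1\mc{B}^\dg$, both Hermitian and idempotent under the appropriate Einstein products, the product $\mc{X}\n\mc{Y}$ collapses to $\mc{P}_1\m\mc{Q}_1\m\mc{P}_1$ and is trivially Hermitian, giving axiom~(3). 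Axiom~(4) becomes the Hermicity of $\mc{Y}\m\mc{X}=\mc{B}^\dg\n\mc{A}^\dg\m\mc{A}\n\mc{B}$, which under the reverse-order law hypothesis equals $(\mc{A}\n\mc{B})^\dg\m(\mc{A}\n\mc{B})$ and so is Hermitian. For axiom~(1), collapse $\mc{A}^\dg\m\mc{A}\n\mc{A}^\dg\m\mc{A}=\mc{A}^\dg\m\mc{A}$ to reduce $\mc{X}\n\mc{Y}\m\mc{X}=\mc{X}$ to $\mc{A}^\dg\m(\mc{A}\n\mc{B}\1\mc{B}^\dg\n\mc{A}^\dg\m\mc{A}\n\mc{B})=\mc{A}^\dg\m(\mc{A}\n\mc{B})$, which is axiom~(1) for $(\mc{A}\n\mc{B})^\dg=\mc{B}^\dg\n\mc{A}^\dg$ multiplied on the left by $\mc{A}^\dg$. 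Axiom~(2) is obtained symmetrically by right-multiplication of axiom~(2) of the reverse-order law by $\mc{A}$. The second identity, $(\mc{A}\n\mc{B}\1\mc{B}^\dg)^\dg=\mc{B}\1\mc{B}^\dg\n\mc{A}^\dg$, is settled by the completely parallel computation.

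For the ``if'' direction, assume both hypotheses and set $\mc{X}':=\mc{B}^\dg\n\mc{A}^\dg$ as the candidate Moore-Penrose inverse of $\mc{A}\n\mc{B}$. Axiom~(3), $((\mc{A}\n\mc{B})\n\mc{X}')^*=(\mc{A}\n\mc{B})\n\mc{X}'$, follows from axiom~(3) of the Moore-Penrose inverse of $\mc{A}\n\mc{B}\1\mc{B}^\dg$ supplied by the second hypothesis, since the idempotent collapse $\mc{Q}_1\n\mc{Q}_1=\mc{Q}_1$ gives $(\mc{A}\n\mc{B}\1\mc{B}^\dg)\m(\mc{B}\1\mc{B}^\dg\n\mc{A}^\dg)=\mc{A}\n\mc{B}\1\mc{B}^\dg\n\mc{A}^\dg=(\mc{A}\n\mc{B})\n\mc{X}'$. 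Axiom~(4) is obtained analogously from axiom~(4) of the Moore-Penrose inverse of $\mc{A}^\dg\m\mc{A}\n\mc{B}$. For axiom~(1), start from axiom~(1) of the Moore-Penrose inverse $(\mc{A}\n\mc{B}\1\mc{B}^\dg)^\dg=\mc{B}\1\mc{B}^\dg\n\mc{A}^\dg$, simplify the repeated $\mc{B}\1\mc{B}^\dg$ to obtain $\mc{A}\n\mc{B}\1\mc{B}^\dg\n\mc{A}^\dg\m\mc{A}\n\mc{B}\1\mc{B}^\dg=\mc{A}\n\mc{B}\1\mc{B}^\dg$, and right-multiply by $\mc{B}$ via $\n$ (using $\mc{B}\1\mc{B}^\dg\n\mc{B}=\mc{B}$) to conclude $\mc{A}\n\mc{B}\1\mc{B}^\dg\n\mc{A}^\dg\m\mc{A}\n\mc{B}=\mc{A}\n\mc{B}$. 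Axiom~(2) is dual: take axiom~(2) for $(\mc{A}^\dg\m\mc{A}\n\mc{B})^\dg$, simplify $\mc{A}^\dg\m\mc{A}\n\mc{A}^\dg\m\mc{A}$, then right-multiply by $\mc{A}^\dg$ via $\n$ and invoke $\mc{A}^\dg\m\mc{A}\n\mc{A}^\dg=\mc{A}^\dg$.

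The main obstacle is the bookkeeping: correctly matching dimensions of the Einstein products ($\m$, $\n$, $\1$) throughout, and identifying which of the four Moore-Penrose axioms for each intermediate product $\mc{A}^\dg\m\mc{A}\n\mc{B}$ and $\mc{A}\n\mc{B}\1\mc{B}^\dg$ supplies which of the four axioms for the target inverse after a single multiplication by $\mc{A}$, $\mc{A}^\dg$, $\mc{B}$, or $\mc{B}^\dg$. Beyond Definition~\ref{defmpi}, Lemma~\ref{revA}, and the idempotency of $\mc{A}^\dg\m\mc{A}$ and $\mc{B}\1\mc{B}^\dg$, no new identity is required.
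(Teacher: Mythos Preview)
Your argument is correct, but it differs from the paper's in two respects. First, you begin by passing to the unweighted tensors $\tilde{\mc{A}}=\mc{M}^{1/2}\m\mc{A}\n\mc{P}^{-1/2}$ and $\tilde{\mc{B}}=\mc{P}^{1/2}\n\mc{B}\1\mc{N}^{-1/2}$ and then verify everything with ordinary Moore--Penrose axioms; the paper instead checks the four \emph{weighted} axioms of Definition~\ref{43} directly, carrying $\mc{M},\mc{N},\mc{P}$ throughout. Your reduction is cleaner and keeps the bookkeeping lighter. Second, and more substantively, for the ``if'' direction the paper does \emph{not} verify the four axioms for $\mc{B}^\dg\n\mc{A}^\dg$ one by one as you do; instead it first establishes the unconditional identity
\[
(\mc{A}\n\mc{B})^\dg_{\mc{M},\mc{N}}=(\mc{A}^\dg_{\mc{M},\mc{P}}\m\mc{A}\n\mc{B})^\dg_{\mc{P},\mc{N}}\n(\mc{A}\n\mc{B}\1\mc{B}^\dg_{\mc{P},\mc{N}})^\dg_{\mc{M},\mc{P}},
\]
derived from Eq.~\eqref{3.8.1} and Eq.~\eqref{2.17}, and then substitutes the two hypotheses into the right-hand side, collapsing via the outer-inverse relation. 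Your route is more elementary and self-contained (it needs only Definition~\ref{defmpi} and the idempotency of $\mc{A}^\dg\m\mc{A}$ and $\mc{B}\1\mc{B}^\dg$), whereas the paper's route relies on the range-space machinery behind Eq.~\eqref{3.8.1} but produces that factorization identity as an independent byproduct.
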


\begin{proof}
Suppose, $ (\mc{A}\n\mc{B})^\dg_{\mc{M},\mc{N}} = \mc{B}^\dg_{\mc{P},\mc{N}}\n \mc{A}^\dg_{\mc{M},\mc{P}}.$ 
Now one can write
\begin{eqnarray*}
 (\mc{A}^\dg_{\mc{M},\mc{P}}\m \mc{A}\n\mc{B})\1(\mc{B}^\dg_{\mc{P},\mc{N}}\n\mc{A}^\dg_{\mc{M},\mc{P}}\m\mc{A}) \n(\mc{A}^\dg_{\mc{M},\mc{P}}\n \mc{A}\n\mc{B})
=   \mc{A}^\dg_{\mc{M},\mc{P}}\m\mc{A}\n\mc{B}.
\end{eqnarray*}
Further we can write
\begin{eqnarray*}
(\mc{B}^\dg_{\mc{P},\mc{N}}\n\mc{A}^\dg_{\mc{M},\mc{P}}\m\mc{A})\n(\mc{A}^\dg_{\mc{M},\mc{P}}\m \mc{A}\n\mc{B})\1(\mc{B}^\dg_{\mc{P},\mc{N}}\n\mc{A}^\dg_{\mc{M},\mc{P}}\m\mc{A})
= \mc{B}^\dg_{\mc{P},\mc{N}}\n\mc{A}^\dg_{\mc{M},\mc{P}}\m\mc{A}.
\end{eqnarray*}
\begin{eqnarray*}
\textnormal{Also~}, [\mc{P}&&\hspace{-.3cm}\n(\mc{A}^\dg_{\mc{M},\mc{P}}\m \mc{A}\n\mc{B})\1(\mc{B}^\dg_{\mc{P},\mc{N}}\n\mc{A}^\dg_{\mc{M},\mc{P}}\m\mc{A})]^* \\
&=&  [(\mc{P}\n\mc{A}^\dg_{\mc{M},\mc{P}}\m \mc{A})\n \mc{P}^{-1}\n(\mc{P}\n \mc{B}\1\mc{B}^\dg_{\mc{P},\mc{N}})\n\mc{P}^{-1}\n(\mc{P}\n\mc{A}^\dg_{\mc{M},\mc{P}}\m\mc{A})]^* \\
&=&  (\mc{P}\n\mc{A}^\dg_{\mc{M},\mc{P}}\m \mc{A})\n \mc{P}^{-1}\n(\mc{P}\n \mc{B}\1\mc{B}^\dg_{\mc{P},\mc{N}})\n\mc{P}^{-1}\n(\mc{P}\n\mc{A}^\dg_{\mc{M},\mc{P}}\m\mc{A})\\
&=&  \mc{P}\n(\mc{A}^\dg_{\mc{M},\mc{P}}\m \mc{A}\n\mc{B})\1(\mc{B}^\dg_{\mc{P},\mc{N}}\n\mc{A}^\dg_{\mc{M},\mc{P}}\m\mc{A}),
\end{eqnarray*}

\begin{eqnarray*}
\textnormal{and~~}  [\mc{N}\1(\mc{B}^\dg_{\mc{P},\mc{N}}\n\mc{A}^\dg_{\mc{M},\mc{P}}\m\mc{A})&&\hspace{-.3cm} \n(\mc{A}^\dg_{\mc{M},\mc{P}}\m \mc{A}\n\mc{B})]^*\\
&=& [\mc{N}\1(\mc{B}^\dg_{\mc{P},\mc{N}}\n\mc{A}^\dg_{\mc{M},\mc{P}})\m\mc{A}\n\mc{B})]^* \\
&=& \mc{N}\1(\mc{B}^\dg_{\mc{P},\mc{N}}\n\mc{A}^\dg_{\mc{M},\mc{P}}\m\mc{A})\n(\mc{A}^\dg_{\mc{M},\mc{P}}\m \mc{A}\n\mc{B}).
\end{eqnarray*}
Hence, 
\begin{equation*}
(\mc{A}^\dg_{\mc{M},\mc{P}}\m \mc{A}\n\mc{B})^\dg_{\mc{P},\mc{N}} = \mc{B}^\dg_{\mc{P},\mc{N}}\n\mc{A}^\dg_{\mc{M},\mc{P}}\m\mc{A}.
\end{equation*} 
By similar arguments one can also show that,$ (\mc{A}\n\mc{B}\1\mc{B}^\dg_{\mc{P},\mc{N}})^\dg_{\mc{M},\mc{P}} = \mc{B}\1\mc{B}^\dg_{\mc{P},\mc{N}}\n\mc{A}^\dg_{\mc{M},\mc{P}} $.\\
Conversely, For proving converse, first we prove a identity.\\
From Eq.\eqref{2.17} and Eq.\eqref{3.8.1} we have,
\begin{eqnarray*}
(\mc{A}\n\mc{B})^\dg_{\mc{M},\mc{N}}
&=& \mc{N}^{-1/2}\1[(\mc{M}^{1/2}\m\mc{A}\n\mc{P}^{-1/2})\n(\mc{P}^{1/2}\n\mc{B}\1\mc{N}^{-1/2}]^\dg\m\mc{M}^{1/2}\\
&=& \mc{N}^{-1/2}\1[(\mc{M}^{1/2}\m\mc{A}\n\mc{P}^{-1/2})^\dg\n(\mc{M}^{1/2}\m\mc{A}\n\mc{P}^{-1/2})\n(\mc{P}^{1/2}\n\mc{B}\1\mc{N}^{-1/2})]^\dg\\
&&\hspace{.5cm}\n[(\mc{M}^{1/2}\m\mc{A}\n\mc{P}^{-1/2})\n(\mc{P}^{1/2}\n\mc{B}\1\mc{N}^{-1/2})\n(\mc{P}^{1/2}\n\mc{B}\1\mc{N}^{-1/2})^\dg]^\dg \m\mc{M}^{1/2}\\
&=&  \mc{N}^{-1/2}\1[\mc{P}^{1/2}\n\mc{A}^\dg_{\mc{M},\mc{P}}\m \mc{A}\n\mc{B}\1\mc{N}^{-1/2}]^\dg \n \mc{P}^{1/2}\n \mc{P}^{-1/2}\n
\\
&&\hspace{1cm} 
[\mc{M}^{1/2}\m\mc{A}\n\mc{B}\1\mc{B}^\dg_{\mc{P},\mc{N}}\n \mc{P}^{-1/2}]^\dg\m\mc{M}^{1/2}\\
&=& (\mc{A}^\dg_{\mc{M},\mc{P}}\m \mc{A}\n\mc{B})^\dg_{\mc{P},\mc{N}}\n (\mc{A}\n\mc{B}\1\mc{B}^\dg_{\mc{P},\mc{N}})^\dg_{\mc{M},\mc{P}}.
\end{eqnarray*}
Further, using the given hypothesis and above identity, we can write
\begin{eqnarray*}
(\mc{A}\n\mc{B})^\dg_{\mc{M},\mc{N}}  
&=& (\mc{B}^\dg_{\mc{P},\mc{N}}\n\mc{A}^\dg_{\mc{M},\mc{P}}\m\mc{A})\n( \mc{B}\1\mc{B}^\dg_{\mc{P},\mc{N}}\n\mc{A}^\dg_{\mc{M},\mc{P}})\\
&=& (\mc{B}^\dg_{\mc{P},\mc{N}}\n\mc{A}^\dg_{\mc{M},\mc{P}}\m\mc{A})\n(\mc{A}^\dg_{\mc{M},\mc{P}}\m\mc{A}\n \mc{B})\1(\mc{B}^\dg_{\mc{P},\mc{N}}\n\mc{A}^\dg_{\mc{M},\mc{P}}\m\mc{A})\n\mc{A}^\dg_{\mc{M},\mc{P}}\\
&=& 
\mc{B}^\dg_{\mc{P},\mc{N}}\n\mc{A}^\dg_{\mc{M},\mc{P}}.
\end{eqnarray*}
This completes the proof.
\end{proof}

In the next theorem, we develop the characterization for the weighted Moore-Penrose inverse of the product of arbitrary-order tensors $\mc{A}$ and $\mc{B}$, as follows.

\begin{theorem}\label{4.6}
Let $\mc{A} \in \mathbb{C}^{I_1\times\cdots\times I_M \times J_1
\times\cdots\times J_N}, \mc{B} \in \mathbb{C}^{J_1\times\cdots\times J_N \times K_1
\times\cdots\times K_L}$. Let $ \mc{M} \in \mathbb{C}^{I_1\times\cdots\times I_M \times I_1 \times\cdots\times I_M}$, $\mc{N} \in \mathbb{C}^{K_1\times\cdots\times K_L \times K_1 \times\cdots\times K_L} $  and $ \mc{P} \in \mathbb{C}^{J_1\times\cdots\times J_N \times J_1 \times\cdots\times J_N} $
are three Hermitian positive definite tensors. Then 
\begin{equation*}
(\mc{A}*_N\mc{B})^\dag_{\mc{M},\mc{N}} = (\mc{B}_1)_{\mc{P},\mc{N}}^\dag *_N(\mc{A}_1)_{\mc{M},\mc{P}}^\dag,  
\end{equation*}
 where $ \mc{A}_1 = \mc{A}*_N\mc{B}_1*_L(\mc{B}_1)_{\mc{P},\mc{N}}^\dag $ and $ \mc{B}_1 =  \mc{A}_{\mc{M},\mc{P}}^\dag*_M\mc{A}*_N\mc{B}  $.
\end{theorem}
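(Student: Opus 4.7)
The plan is to reduce the weighted identity to a Cline-type formula for the ordinary Moore--Penrose inverse and then translate back through the weights. Set $\tilde{\mc{A}} := \mc{M}^{1/2}\m \mc{A}\n \mc{P}^{-1/2}$ and $\tilde{\mc{B}} := \mc{P}^{1/2}\n \mc{B}\1 \mc{N}^{-1/2}$, so that $\tilde{\mc{A}}\n\tilde{\mc{B}} = \mc{M}^{1/2}\m(\mc{A}\n\mc{B})\1\mc{N}^{-1/2}$ and, by Eq.~\eqref{2.17},
\begin{equation*}
(\mc{A}\n\mc{B})^\dg_{\mc{M},\mc{N}} = \mc{N}^{-1/2}\1 (\tilde{\mc{A}}\n \tilde{\mc{B}})^\dg \m \mc{M}^{1/2}.
\end{equation*}
It therefore suffices to compute $(\tilde{\mc{A}}\n\tilde{\mc{B}})^\dg$ and re-interpret the answer in terms of $\mc{A}_1$ and $\mc{B}_1$.

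The main auxiliary step is an unweighted Cline-type identity: with $\tilde{\mc{B}}_1 := \tilde{\mc{A}}^\dg \m \tilde{\mc{A}}\n \tilde{\mc{B}}$ and $\tilde{\mc{A}}_1 := \tilde{\mc{A}}\n \tilde{\mc{B}}_1 \1 \tilde{\mc{B}}_1^\dg$, I claim $(\tilde{\mc{A}}\n \tilde{\mc{B}})^\dg = \tilde{\mc{B}}_1^\dg \n \tilde{\mc{A}}_1^\dg$. Two observations drive this. First, the algebraic identities $\tilde{\mc{A}}_1\n\tilde{\mc{B}}_1 = \tilde{\mc{A}}\n\tilde{\mc{B}}$ and $\tilde{\mc{A}}_1\n\tilde{\mc{B}}_1\1\tilde{\mc{B}}_1^\dg = \tilde{\mc{A}}_1$ follow at once from axiom~(1) of Definition~\ref{defmpi}. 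Second, the range equality $\mathfrak{R}(\tilde{\mc{A}}_1^*) = \mathfrak{R}(\tilde{\mc{B}}_1)$ holds. The inclusion $\subseteq$ is visible from the factorization $\tilde{\mc{A}}_1^* = \tilde{\mc{B}}_1\1\tilde{\mc{B}}_1^\dg\n\tilde{\mc{A}}^*$. For the reverse inclusion, I would combine Lemma~\ref{revA}(c), which gives $\tilde{\mc{A}}^\dg = \tilde{\mc{A}}^*\m(\tilde{\mc{A}}\n\tilde{\mc{A}}^*)^\dg$, with the idempotence identity $\tilde{\mc{B}}_1 = \tilde{\mc{B}}_1\1\tilde{\mc{B}}_1^\dg\n\tilde{\mc{B}}_1$ to write
\begin{equation*}
\tilde{\mc{B}}_1 = \tilde{\mc{B}}_1\1\tilde{\mc{B}}_1^\dg\n\tilde{\mc{A}}^*\m\bigl[(\tilde{\mc{A}}\n\tilde{\mc{A}}^*)^\dg\m\tilde{\mc{A}}\n\tilde{\mc{B}}\bigr],
\end{equation*}
which exhibits $\tilde{\mc{B}}_1$ as $\tilde{\mc{A}}_1^*$ contracted with an auxiliary tensor, so Lemma~\ref{lemma25} delivers $\mathfrak{R}(\tilde{\mc{B}}_1)\subseteq\mathfrak{R}(\tilde{\mc{A}}_1^*)$. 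With the range equality in hand, $\tilde{\mc{A}}_1^\dg\m\tilde{\mc{A}}_1$ and $\tilde{\mc{B}}_1\1\tilde{\mc{B}}_1^\dg$ coincide as the Hermitian projector onto that common range, and the four Moore--Penrose axioms for $\tilde{\mc{B}}_1^\dg\n\tilde{\mc{A}}_1^\dg$ then reduce to one-line calculations.

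Next I would translate back. Applying Eq.~\eqref{2.17} to $\mc{A}$ with weights $(\mc{M},\mc{P})$ yields $\tilde{\mc{A}}^\dg = \mc{P}^{1/2}\n\mc{A}^\dg_{\mc{M},\mc{P}}\m\mc{M}^{-1/2}$, and direct substitution gives $\tilde{\mc{B}}_1 = \mc{P}^{1/2}\n\mc{B}_1\1\mc{N}^{-1/2}$. A second application of Eq.~\eqref{2.17} to $\mc{B}_1$ with weights $(\mc{P},\mc{N})$ produces $\tilde{\mc{B}}_1^\dg = \mc{N}^{1/2}\1(\mc{B}_1)^\dg_{\mc{P},\mc{N}}\n\mc{P}^{-1/2}$, from which $\tilde{\mc{A}}_1 = \tilde{\mc{A}}\n\tilde{\mc{B}}_1\1\tilde{\mc{B}}_1^\dg = \mc{M}^{1/2}\m\mc{A}_1\n\mc{P}^{-1/2}$ and $\tilde{\mc{A}}_1^\dg = \mc{P}^{1/2}\n(\mc{A}_1)^\dg_{\mc{M},\mc{P}}\m\mc{M}^{-1/2}$. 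Plugging these into the Cline identity and then into the opening display, the inner factors $\mc{P}^{-1/2}\n\mc{P}^{1/2}$, $\mc{N}^{-1/2}\1\mc{N}^{1/2}$, and $\mc{M}^{-1/2}\m\mc{M}^{1/2}$ telescope to identity tensors, leaving $(\mc{A}\n\mc{B})^\dg_{\mc{M},\mc{N}} = (\mc{B}_1)^\dg_{\mc{P},\mc{N}}\n(\mc{A}_1)^\dg_{\mc{M},\mc{P}}$.

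The main obstacle is establishing the range equality $\mathfrak{R}(\tilde{\mc{A}}_1^*) = \mathfrak{R}(\tilde{\mc{B}}_1)$. The easy inclusion is a single factorization, but the reverse inclusion is not purely formal: it requires the Lemma~\ref{revA}(c) representation of $\tilde{\mc{A}}^\dg$ together with the fact that the projector $\tilde{\mc{B}}_1\1\tilde{\mc{B}}_1^\dg$ acts as the identity on $\mathfrak{R}(\tilde{\mc{B}}_1)$. Without this equality, the cancellation $\tilde{\mc{B}}_1\1\tilde{\mc{B}}_1^\dg\n\tilde{\mc{A}}_1^\dg = \tilde{\mc{A}}_1^\dg$, which is needed to verify axiom~(2) for $\tilde{\mc{B}}_1^\dg\n\tilde{\mc{A}}_1^\dg$, breaks down and the rest of the argument unravels; once the equality is secured, everything else is routine bookkeeping in the weight factors.
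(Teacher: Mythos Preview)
Your proof is correct. The approach differs from the paper's mainly in organisation: you strip off the weights first via Eq.~\eqref{2.17}, establish the unweighted Cline identity $(\tilde{\mc{A}}\n\tilde{\mc{B}})^\dg=\tilde{\mc{B}}_1^\dg\n\tilde{\mc{A}}_1^\dg$ using the range equality $\mathfrak{R}(\tilde{\mc{A}}_1^*)=\mathfrak{R}(\tilde{\mc{B}}_1)$ and Lemma~\ref{ID}(b), and then re-insert the weight factors at the end. The paper instead works directly in the weighted setting throughout: it verifies the four conditions of Definition~\ref{43} for $\mc{Y}=(\mc{B}_1)^\dg_{\mc{P},\mc{N}}\n(\mc{A}_1)^\dg_{\mc{M},\mc{P}}$, and the key step there --- the projector equality $\mc{B}_1\1(\mc{B}_1)^\dg_{\mc{P},\mc{N}}=(\mc{A}_1)^\dg_{\mc{M},\mc{P}}\m\mc{A}_1$ --- is obtained not via range arguments but by a short symmetry trick exploiting that $\mc{P}\n\mc{B}_1\1(\mc{B}_1)^\dg_{\mc{P},\mc{N}}$ and $\mc{P}\n(\mc{A}_1)^\dg_{\mc{M},\mc{P}}\m\mc{A}_1$ are both Hermitian. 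Both routes hinge on the same projector identity; your reduction-to-unweighted strategy is the one the paper itself employs elsewhere (e.g.\ in Theorem~\ref{RV2}), and it has the advantage of making the Cline structure transparent and reusable, while the paper's direct weighted verification is slightly shorter since it avoids the translation step.
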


\begin{proof}
\begin{eqnarray}\label{4.5.1}
 \mc{A}*_N\mc{B} &=& \mc{A}*_N\mc{A}^\dag_{\mc{M},\mc{P}}*_M\mc{A}*_N\mc{B}
 = \mc{A}*_N\mc{B}_1\\ \nonumber
 &=& \mc{A}*_N\mc{B}_1*_L(\mc{B}_1)^\dag_{\mc{P},\mc{N}}*_N\mc{B}_1 = \mc{A}_1*_N\mc{B}_1.    
\end{eqnarray}
\begin{eqnarray}\label{4.5.2}\nonumber
    \mc{A}_{\mc{M},\mc{P}}^\dag*_M\mc{A}_1 
    &=& \mc{A}^\dag_{\mc{M},\mc{P}}*_M\mc{A}*_N\mc{A}^\dag_{\mc{M},\mc{P}}*_M\mc{A}*_N\mc{B}*_L(\mc{B}_1)^\dag_{\mc{P},\mc{N}}\\
    &=& \mc{B}_1*_L(\mc{B}_1)^\dag_{\mc{P},\mc{N}}.  
\end{eqnarray}
\begin{eqnarray}\label{4.5.3}\nonumber
    \mc{A}^\dag_{\mc{M},\mc{P}}*_M\mc{A}_1 &=& \mc{A}^\dag_{\mc{M},\mc{P}}*_M\mc{A}_1*_N(\mc{A}_1)^\dag_{\mc{M},\mc{P}}*_M\mc{A}_1\\\nonumber
    &=& \mc{B}_1*_L(\mc{B}_1)^\dag_{\mc{P},\mc{N}}*_N(\mc{A}_1)^\dag_{\mc{M},\mc{P}}*_M\mc{A}_1. 
\end{eqnarray}
From \eqref{4.5.2} and \eqref{4.5.3}, we have 
\begin{eqnarray*}
\mc{P}*_N\mc{B}_1*_L(\mc{B}_1)^\dag_{\mc{P},\mc{N}} 
= [\mc{P}*_N\mc{B}_1*_L(\mc{B}_1)^\dag_{\mc{P},\mc{N}}]*_N\mc{P}^{-1}*_N[\mc{P}*_N(\mc{A}_1)^\dag_{\mc{M},\mc{P}}*_M\mc{A}_1].
\end{eqnarray*}
Therefore, 
\begin{eqnarray*}
\mc{P}*_N\mc{B}_1*_L(\mc{B}_1)^\dag_{\mc{P},\mc{N}} 
&=& [\mc{P}*_N\mc{B}_1*_L(\mc{B}_1)^\dag_{\mc{P},\mc{N}}]^* \\
&=& \mc{P}*_N(\mc{A}_1)^\dag_{\mc{M},\mc{P}}*_M\mc{A}_1*_N\mc{B}_1*_L(\mc{B}_1)^\dag_{\mc{P},\mc{N}}\\
&=& \mc{P}*_N(\mc{A}_1)^\dag_{\mc{M},\mc{P}}*_M\mc{A}_1.
\end{eqnarray*}
Hence, 
\begin{eqnarray}\label{4.5.4}
 {\mc{B}_1*_L(\mc{B}_1)^\dag_{\mc{P},\mc{N}} 
 = (\mc{A}_1)^\dag_{\mc{M},\mc{P}}*_M\mc{A}_1 
 = \mc{A}^\dag_{\mc{M},\mc{P}}*_M\mc{A}_1}.
\end{eqnarray}
Let $ \mc{X} = \mc{A}*_N\mc{B} $ and $ \mc{Y} = (\mc{B}_1)^\dag_{\mc{P},\mc{N}}*_N(\mc{A}_1)^\dag_{\mc{M},\mc{P}} $.
Using \eqref{4.5.1} and \eqref{4.5.4} we obtain
\begin{eqnarray*}
\mc{X}*_L\mc{Y}*_M\mc{X} 
&=& \mc{A}*_N\mc{B}*_L(\mc{B}_1)^\dag_{\mc{P},\mc{N}}*_M(\mc{A}_1)^\dag_{\mc{M},\mc{P}}*_N\mc{A}_1*_N\mc{B}_1 \\
&=& \mc{A}_1*_N\mc{B}_1*_L(\mc{B}_1)^\dag_{\mc{P},\mc{N}}*_N\mc{B}_1*_L(\mc{B}_1)^\dag_{\mc{P},\mc{N}}*_N\mc{B}_1
= \mc{X},
\end{eqnarray*}
\begin{eqnarray*}
\mc{Y}*_M\mc{X}*_L\mc{Y} 
&=& (\mc{B}_1)^\dag_{\mc{P},\mc{N}}*_N\mc{B}_1*_L(\mc{B}_1)^\dag_{\mc{P},\mc{N}}*_N\mc{B}_1*_L(\mc{B}_1)^\dag_{\mc{P},\mc{N}}*_N(\mc{A}_1)^\dag_{\mc{M},\mc{P}}
= \mc{Y},
\end{eqnarray*}
\begin{eqnarray*}
\mc{M}*_M\mc{X}*_L\mc{Y} 
&=& \mc{M}*_M\mc{A}_1*_N(\mc{A}_1)^\dag_{\mc{M},\mc{P}}*_M\mc{A}_1*_N(\mc{A}_1)^\dag_{\mc{M},\mc{P}}
= (\mc{M}*_M\mc{X}*_L\mc{Y})^* 
\end{eqnarray*}
and 
\begin{eqnarray*}
\mc{N}*_L\mc{Y}*_M\mc{X} 
&=& \mc{N}*_L(\mc{B}_1)^\dag_{\mc{P},\mc{N}}*_N\mc{B}_1*_L(\mc{B}_1)^\dag_{\mc{P},\mc{N}}*_N\mc{B}_1 
= (\mc{N}*_L\mc{Y}*_M\mc{X})^*.
\end{eqnarray*}
Hence, $ \mc{X}^\dag_{\mc{M},\mc{N}} = \mc{Y} $, i.e., $ (\mc{A}*_N\mc{B})^\dag_{\mc{M},\mc{N}} = (\mc{B}_1)^\dag_{\mc{P},\mc{N}}*_N(\mc{A}_1)^\dag_{\mc{M},\mc{P}} $.
\end{proof}
We shall present the following example as a confirmation of the above Theorem.
\begin{example}
Let $\mc{A}_1 = \mc{A}*_2\mc{B}_1*_1(\mc{B}_1)_{\mc{P},\mc{N}}^\dag $ and $ \mc{B}_1 =  \mc{A}_{\mc{M},\mc{P}}^\dag*_1\mc{A}*_2\mc{B}$, where 
$~\mc{A}=(a_{ijk})
 \in \mathbb{R}^{3\times2\times4},~~\mc{B}=(b_{ijk})
 \in \mathbb{R}^{2\times4\times3}, ~~\mc{M}=(m_{ij})
 \in \mathbb{R}^{3\times3}, ~~ \mc{N}=(n_{ij})
 \in \mathbb{R}^{3\times3}$ and $\mc{P}=(p_{ijkl})
 \in \mathbb{R}^{2\times4\times2\times4}$ such that

\begin{eqnarray*}
a_{ij1} =
\begin{pmatrix}
-1 & 2 \\
1 & -1 \\
0 & 1 \\
    \end{pmatrix},
a_{ij2} =
    \begin{pmatrix}
1 & 0 \\
0 & 0 \\
1 & 0 \\
\end{pmatrix},
a_{ij3} =
\begin{pmatrix}
2 & 0 \\
1 & 1 \\
0 & 0 \\
    \end{pmatrix},
a_{ij4} =
    \begin{pmatrix}
3 & 2 \\
1 & -1 \\
0 & 1 \\
\end{pmatrix},
\end{eqnarray*}
\begin{eqnarray*}
b_{ij1} = 
    \begin{pmatrix}
-1 & 2 & 1 & 1\\
0 & 1 & 1 & 0\\
    \end{pmatrix},
b_{ij2} =
    \begin{pmatrix}
0 & 1 & 1 & 1\\
1 & 1 & 0 & 1\\
    \end{pmatrix},
    b_{ij3} =
    \begin{pmatrix}
0 & 1 & 1 & 1\\
1 & 1 & 0 & 1\\
    \end{pmatrix},
       \end{eqnarray*}  

    \begin{eqnarray*}
M =
\begin{pmatrix}
3 & 0 & 1 \\
0 & 2 & 0 \\
1 & 0 & 2 \\
    \end{pmatrix},
N =
    \begin{pmatrix}
1 & 1 & 0 \\
1 & 2 & 0 \\
0 & 0 & 1 \\
\end{pmatrix},
\end{eqnarray*}

  \begin{eqnarray*}
p_{ij11} =
\begin{pmatrix}
1 & 0 & 0 & 1\\
0 & 0 & 0 & 0\\
    \end{pmatrix},
p_{ij12} =
    \begin{pmatrix}
0 & 1 & 0 & 0\\
0 & 0 & 0 & 0\\
\end{pmatrix}
p_{ij13} =
\begin{pmatrix}
0 & 0 & 1 & 0\\
1 & 0 & 0 & 0\\
    \end{pmatrix},
p_{ij14} =
    \begin{pmatrix}
1 & 0 & 0 & 3\\
0 & 0 & 1 & 0\\
\end{pmatrix}
\end{eqnarray*}  
     \begin{eqnarray*}
p_{ij21} =
\begin{pmatrix}
0 & 0 & 1 & 0\\
2 & 0 & 0 & 0\\
    \end{pmatrix},
p_{ij22} =
    \begin{pmatrix}
0 & 0 & 0 & 0\\
0 & 2 & 2 & 1\\
\end{pmatrix}
p_{ij23} =
\begin{pmatrix}
0 & 0 & 0 & 1\\
0 & 2 & 5 & 0\\
    \end{pmatrix},
p_{ij24} =
    \begin{pmatrix}
0 & 0 & 0 & 0\\
0 & 1 & 0 & 1\\
\end{pmatrix}
\end{eqnarray*}  
 
 Then $\mc{A}_1 = (\tilde{a}_{ijk})
 \in \mathbb{R}^{3\times2\times4}$, $\mc{B}_1 = (\tilde{b}_{ijk})
 \in \mathbb{R}^{2\times4\times3},~~ (\mc{A}_1)^\dag_{\mc{M},\mc{P}}  = (x_{ijk}) \in \mathbb{R}^{2\times4\times3}$\\  and $(\mc{B}_1)^\dag_{\mc{P},\mc{N}} =  (y_{ijk})  \in \mathbb{R}^{3\times2\times4}$  such that 
\begin{eqnarray*}
\tilde{a}_{ij1} =
\begin{pmatrix}
-1 & 2 \\
0 & -1 \\
0 & 1 \\
    \end{pmatrix},
\tilde{a}_{ij2} =
    \begin{pmatrix}
1 & 0 \\
0 & 1 \\
1 & 0 \\
\end{pmatrix},
\tilde{a}_{ij3} =
\begin{pmatrix}
2 & 0 \\
1 & 1 \\
0 & 0 \\
    \end{pmatrix},
\tilde{a}_{ij4} =
    \begin{pmatrix}
3 & 2 \\
1 & -1 \\
0 & 1 \\
\end{pmatrix},
\end{eqnarray*}
\begin{eqnarray*}
\tilde{b}_{ij1} = 
    \begin{pmatrix}
-0.3450  &  1.0728  &  0.7438  &  0.4134\\
-0.2067  & 1.1965  &  -0.4265  & -0.8661\\
    \end{pmatrix},
\tilde{b}_{ij2} =
    \begin{pmatrix}
-0.3319  &  1.7409  &  1.0320  &  0.4483\\
-0.2242 &   1.1004  & -0.3217  & -0.5167
    \end{pmatrix},
\end{eqnarray*}  
 \begin{eqnarray*}
   \tilde{b}_{ij3} =
    \begin{pmatrix}
 1.5109  &  4.0568  &  0.2402  & -0.6376 \\
 0.3188 &   1.2533  &  0.0873  & -0.3755
    \end{pmatrix},
    x_{ij1} = 
    \begin{pmatrix}
 -0.2052  & -0.1339  &  0.1514 &   0.1194\\
 -0.0597  & -0.1616   & 0.0247    & 0.1936
    \end{pmatrix},
       \end{eqnarray*}  
 \begin{eqnarray*}
x_{ij2} =
    \begin{pmatrix}
   0.0218  &  0.4469   & 0.1470  &  0.0582\\
   -0.0291  &  0.5066 & -0.1587  & -0.4178

    \end{pmatrix},
x_{ij3} =
    \begin{pmatrix}
 0.4236  &  0.9360  & -0.0146 &  -0.2038\\
0.1019   & 0.2271    & 0.0553 &  -0.0378
    \end{pmatrix},
       \end{eqnarray*}  
 
 \begin{eqnarray*}
y_{ij1} =
\begin{pmatrix}
 0.4783  & -1.6522\\
   -0.5217 &   1.3478\\
    0.1304  & -0.0870
    \end{pmatrix},
y_{ij2} =
    \begin{pmatrix}
-0.5217  &  0.6522\\
    0.4783 &  -0.3478\\
    0.1304  &  0.0870
\end{pmatrix},
y_{ij3} =
\begin{pmatrix}
 -0.3043 &    0.6522\\
    0.6957 &  -0.3478\\
   -0.1739  &  0.0870
    \end{pmatrix},
    \end{eqnarray*}
 \begin{eqnarray*}
y_{ij4} =
    \begin{pmatrix}
   -0.7826  & -1.6522\\
    1.2174  &  1.3478\\
   -0.3043  & -0.0870
\end{pmatrix},
\end{eqnarray*}
 Thus
 \begin{eqnarray*}
 (\mc{A}*_N\mc{B})^\dag_{\mc{M},\mc{N}} = 
 \begin{pmatrix}
-0.4783  &  0.6522 &  -0.0435\\
    0.5217  & -0.3478 &  -0.0435\\
   -0.1304 &   0.0870  &  0.2609
\end{pmatrix}
= (\mc{B}_1)^\dag_{\mc{P},\mc{N}}*_N(\mc{A}_1)^\dag_{\mc{M},\mc{P}}. 
 \end{eqnarray*}
Hence $ (\mc{A}*_N\mc{B})^\dag_{\mc{M},\mc{N}} = (\mc{B}_1)^\dag_{\mc{P},\mc{N}}*_N(\mc{A}_1)^\dag_{\mc{M},\mc{P}}$
\end{example}

Further, using the Lemma 4 in \cite{weit2} on an arbitrary-order tensor $\mc{A} \in \mathbb{C}^{I_1\times\cdots\times I_M \times J_1 \times\cdots\times J_N}$ with Hermitian positive definite tensors $\mc{M} \in \mathbb{C}^{I_1\times\cdots\times I_M \times I_1 \times\cdots\times I_M}$ and $\mc{N} \in \mathbb{C}^{J_1\times\cdots\times J_N \times J_1 \times\cdots\times J_N} $ one can write the following identity
\begin{equation}\label{12.1}
\mathfrak{R}(\mc{A}^\dag_{\mc{M},\mc{N}}*_M\mc{A}) = \mathfrak{R}(\mc{A}^\#_{\mc{N},\mc{M}})
\end{equation}
Using the above identity, a sufficient condition for the reverse order law for weighted Moore-Penrose inverse of tensor is presented next.

\begin{corollary}
Let $ \mc{A} \in \mathbb{C}^{I_1\times\cdots\times I_M \times J_1
\times\cdots\times J_N}, \mc{B} \in \mathbb{C}^{J_1\times\cdots\times J_N \times K_1
\times\cdots\times K_L}$. Let $ \mc{M} \in \mathbb{C}^{I_1\times\cdots\times I_M \times I_1 \times\cdots\times I_M}$, $\mc{N} \in \mathbb{C}^{K_1\times\cdots\times K_L \times K_1 \times\cdots\times K_L} $  and $ \mc{P} \in \mathbb{C}^{J_1\times\cdots\times J_N \times J_1 \times\cdots\times J_N} $ are positive definite Hermitian tensors. If
\begin{equation*}
\mathfrak{R}(\mc{B}) \subseteq \mathfrak{R}(\mc{A}^{\#}_{\mc{P},\mc{M}}) ~~~ and~~~  \mc{N}(\mc{B}^{\#}_{\mc{N}^{1/2},\mc{P}^{1/2}}) \subseteq \mc{N}(\mc{A}),
\end{equation*}
Then
\begin{equation*}
(\mc{A}*_N\mc{B})^\dag_{\mc{M},\mc{N}} = \mc{B}^\dag_{\mc{P},\mc{N}}*_N\mc{A}^\dag_{\mc{M},\mc{P}}.
\end{equation*}
\end{corollary}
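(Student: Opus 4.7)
The plan is to apply Theorem \ref{RV2} after first upgrading the two hypotheses to the single range equality $\mathfrak{R}(\mc{B})=\mathfrak{R}(\mc{A}^{\#}_{\mc{P},\mc{M}})$; once that is in hand, both sufficient conditions of Theorem \ref{RV2} become essentially trivial.

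First I would rewrite both hypotheses as inclusions inside $\mathbb{C}^{J_1\times\cdots\times J_N}$. Since $\mc{A}^{\#}_{\mc{P},\mc{M}}=\mc{P}^{-1}\n\mc{A}^*\m\mc{M}$ with $\mc{M}$ invertible, we have $\mathfrak{R}(\mc{A}^{\#}_{\mc{P},\mc{M}})=\mc{P}^{-1}\n\mathfrak{R}(\mc{A}^*)$, so the first hypothesis reads $\mathfrak{R}(\mc{B})\subseteq\mc{P}^{-1}\n\mathfrak{R}(\mc{A}^*)$. For the second, observe that $\mc{B}^{\#}_{\mc{N}^{1/2},\mc{P}^{1/2}}=\mc{N}^{-1/2}\1\mc{B}^*\n\mc{P}^{1/2}$; invertibility of $\mc{N}^{-1/2}$ gives $\mc{N}(\mc{B}^{\#}_{\mc{N}^{1/2},\mc{P}^{1/2}})=\mc{N}(\mc{B}^*\n\mc{P}^{1/2})$, and the standard duality $\mc{N}(\mc{T}_1)\subseteq\mc{N}(\mc{T}_2)\Leftrightarrow\mathfrak{R}(\mc{T}_2^*)\subseteq\mathfrak{R}(\mc{T}_1^*)$ (valid under the Frobenius inner product transported through the reshape isomorphism) converts the second hypothesis to $\mathfrak{R}(\mc{A}^*)\subseteq\mathfrak{R}(\mc{P}^{1/2}\n\mc{B})=\mc{P}^{1/2}\n\mathfrak{R}(\mc{B})$, that is, $\mc{P}^{-1/2}\n\mathfrak{R}(\mc{A}^*)\subseteq\mathfrak{R}(\mc{B})$.

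Combining the two yields the sandwich
\begin{equation*}
\mc{P}^{-1/2}\n\mathfrak{R}(\mc{A}^*)\;\subseteq\;\mathfrak{R}(\mc{B})\;\subseteq\;\mc{P}^{-1}\n\mathfrak{R}(\mc{A}^*).
\end{equation*}
Because $\mc{P}^{-1/2}$ and $\mc{P}^{-1}$ act as invertible linear maps on $\mathbb{C}^{J_1\times\cdots\times J_N}$, the two outer subspaces share the common dimension $rshrank(\mc{A})$, and a dimension-squeezing argument forces equality throughout; in particular $\mathfrak{R}(\mc{B})=\mc{P}^{-1}\n\mathfrak{R}(\mc{A}^*)=\mathfrak{R}(\mc{A}^{\#}_{\mc{P},\mc{M}})$.

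With this equality, the two sufficient conditions of Theorem \ref{RV2} follow from the trivial observation $\mathfrak{R}(\mc{X}\m\mc{Y})\subseteq\mathfrak{R}(\mc{X})$: indeed $\mathfrak{R}(\mc{A}^{\#}_{\mc{P},\mc{M}}\m\mc{A}\n\mc{B})\subseteq\mathfrak{R}(\mc{A}^{\#}_{\mc{P},\mc{M}})=\mathfrak{R}(\mc{B})$, and $\mathfrak{R}(\mc{B}\1\mc{B}^{\#}_{\mc{N},\mc{P}}\n\mc{A}^{\#}_{\mc{P},\mc{M}})\subseteq\mathfrak{R}(\mc{B})=\mathfrak{R}(\mc{A}^{\#}_{\mc{P},\mc{M}})$. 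Applying Theorem \ref{RV2} then delivers the desired reverse order law $(\mc{A}\n\mc{B})^\dg_{\mc{M},\mc{N}}=\mc{B}^\dg_{\mc{P},\mc{N}}\n\mc{A}^\dg_{\mc{M},\mc{P}}$. The main technical hurdle is justifying the orthogonal-complement duality and the dimension count in the arbitrary-order tensor setting, but both transfer cleanly from standard matrix linear algebra via the reshape bijection.
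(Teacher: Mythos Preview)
Your argument is correct but follows a genuinely different route from the paper's. The paper does not combine the two hypotheses at all; instead it invokes Theorem~\ref{4.6}, which always produces a factorization $(\mc{A}\n\mc{B})^\dg_{\mc{M},\mc{N}}=(\mc{B}_1)^\dg_{\mc{P},\mc{N}}\n(\mc{A}_1)^\dg_{\mc{M},\mc{P}}$ with auxiliary tensors $\mc{B}_1=\mc{A}^\dg_{\mc{M},\mc{P}}\m\mc{A}\n\mc{B}$ and $\mc{A}_1=\mc{A}\n\mc{B}_1\1(\mc{B}_1)^\dg_{\mc{P},\mc{N}}$, and then uses each hypothesis separately: the first, together with Eq.~\eqref{12.1}, yields $\mc{B}_1=\mc{B}$; the second, rewritten via Lemma~\ref{ID}(a), yields $\mc{A}\n\mc{B}\1\mc{B}^\dg_{\mc{P},\mc{N}}=\mc{A}$ and hence $\mc{A}_1=\mc{A}$. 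No dimension count or null/range duality is needed.

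Your approach, by contrast, merges the two hypotheses into the single equality $\mathfrak{R}(\mc{B})=\mathfrak{R}(\mc{A}^{\#}_{\mc{P},\mc{M}})$ via a dimension squeeze, and then feeds this into Theorem~\ref{RV2}. This is slicker and, as a by-product, extracts the stronger structural fact that the two one-sided inclusions actually force a range \emph{equality}. The cost is that you must justify the orthogonal-complement duality $\mc{N}(\mc{T}_1)\subseteq\mc{N}(\mc{T}_2)\Leftrightarrow\mathfrak{R}(\mc{T}_2^*)\subseteq\mathfrak{R}(\mc{T}_1^*)$ and the dimension argument in the tensor setting, which (as you note) goes through cleanly via the reshape isomorphism but is not developed explicitly in the paper. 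The paper's route is more self-contained with respect to the machinery it has already built; yours gives more insight into what the hypotheses are really saying.
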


\begin{proof}
From Theorem\ref{4.6} we have, $ (\mc{A}*_N\mc{B})^\dag_{\mc{M},\mc{N}} = (\mc{B}_1)_{\mc{P},\mc{N}}^\dag *_N(\mc{A}_1)_{\mc{M},\mc{P}}^\dag $, where \\
$ \mc{A}_1 = \mc{A}*_N\mc{B}_1*_L(\mc{B}_1)_{\mc{P},\mc{N}}^\dag $ and $ \mc{B}_1 =  \mc{A}_{\mc{M},\mc{P}}^\dag*_M\mc{A}*_N\mc{B}  $.\\
From Eq.\eqref{12.1} and given hypothesis, we have
\begin{eqnarray*}
\mathfrak{R}(\mc{A}^\dag_{\mc{M},\mc{P}}*_M\mc{A}) = \mathfrak{R}(\mc{A}^{\#}_{\mc{P},\mc{M}}) \supseteq \mathfrak{R}(\mc{B})
\end{eqnarray*}
So there exists $ \mc{P} \in \mathbb{C}^{J_1 \times \cdots \times J_N \times K_1 \times \cdots \times K_L} $ such that $ \mc{B} = \mc{A}^\dag_{\mc{M},\mc{N}}*_M\mc{A}*_N\mc{P}$.
 Now, 
 \begin{eqnarray*}
 \mc{B}_1 = \mc{A}^\dag_{\mc{M},\mc{N}}*_M\mc{A}*_N\mc{B} 
 = \mc{A}^\dag_{\mc{M},\mc{N}}*_M\mc{A}*_N\mc{P} = \mc{B}.
 \end{eqnarray*}
Hence $ \mc{A}_1 = \mc{A}*_N\mc{B}*_L\mc{B}^\dag_{\mc{N},\mc{P}} $.\\ Further, 
we have, $ \mc{N}(\mc{B}^{\#}_{\mc{N}^{1/2},\mc{P}^{1/2}}) \subseteq \mc{N}(\mc{A}) $, which is equivalent to 
\begin{equation*}
\mathfrak{R}(\mc{P}^{-1/2}*_N\mc{A}^*) = \mathfrak{R}(\mc{A}^*) \subseteq \mathfrak{R}[(\mc{B}^{\#}_{\mc{N}^{1/2},\mc{P}^{1/2}})^*] = \mathfrak{R}[(\mc{N}^{-1/2}*_L\mc{B}^* *_N\mc{P}^{1/2})^*] 
\end{equation*}
Then from Lemma\ref{ID}(a), we have 
\begin{equation*}
(\mc{A}*_N\mc{P}^{-1/2})*_N(\mc{N}^{-1/2}*_L\mc{B}^* *_N\mc{P}^{1/2})^\dag*_L(\mc{N}^{-1/2}*_L\mc{B}^* *_N\mc{P}^{1/2}) = \mc{A}*_N\mc{P}^{-1/2}, 
\end{equation*}
which equivalently
\begin{equation*}
(\mc{A}*_N\mc{P}^{-1/2})*_N[(\mc{P}^{1/2}*_N\mc{B}*_L\mc{N}^{-1/2})*_L(\mc{P}^{1/2}*_N\mc{B}*_L\mc{N}^{-1/2})^\dag]^* = \mc{A}*_N\mc{P}^{-1/2},
\end{equation*}
that is 
\begin{equation*}
    \mc{A}*_N\mc{B}*_L\mc{N}^{-1/2}*_L(\mc{P}^{1/2}*_N\mc{B}*_L\mc{N}^{-1/2})^\dag*_N\mc{P}^{1/2} = \mc{A},
\end{equation*}
i.e.
\begin{equation*}
\mc{A}_1 = \mc{A}*_N\mc{B}*_L\mc{B}^\dag_{\mc{N},\mc{P}} = \mc{A}.
\end{equation*}
Hence, $ (\mc{A}*_N\mc{B})^\dag_{\mc{M},\mc{N}} = \mc{B}^\dag_{\mc{N},\mc{P}}*_N\mc{A}^\dag_{\mc{M},\mc{N}} $.
\end{proof}

We next present another characterization of the product of arbitrary-order tensors,
as follows,

\begin{theorem}
Let $\mc{A} \in \mathbb{C}^{I_1\times\cdots\times I_M \times J_1
\times\cdots\times J_N}$ and $ \mc{B} \in \mathbb{C}^{J_1\times\cdots\times J_N \times K_1 \times\cdots\times K_L}$.\\
Let $ \mc{M} \in \mathbb{C}^{I_1\times\cdots\times I_M \times I_1 \times\cdots\times I_M} $, $\mc{N} \in \mathbb{C}^{K_1\times\cdots\times K_L \times K_1 \times\cdots\times K_L} $  and $ \mc{P} \in \mathbb{C}^{J_1\times\cdots\times J_N \times J_1 \times\cdots\times J_N} $
are  three Hermitian positive definite tensors. Then 
\begin{eqnarray*}
 (\mc{A}*_N\mc{B})_{\mc{M},\mc{N}}^{\dg} = (\mc{B}_1)^\dg_{\mc{\mc{P},\mc{N}}}\n (\mc{A}_1)^\dg_{\mc{M},\mc{P}},
\end{eqnarray*}
where $ \mc{A}_1 = \mc{A}\n\mc{B}\1 \mc{B}^\dg_{\mc{P},\mc{I}_L} $ and $ \mc{B}_1 = (\mc{A}_1)^\dg_{\mc{M},\mc{P}}\m \mc{A}_1 \n \mc{B}.$  
\end{theorem}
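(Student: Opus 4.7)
The plan is to mirror the argument of Theorem~\ref{4.6} with the roles of $\mc{A}_1$ and $\mc{B}_1$ interchanged: I verify directly that $\mc{Y} := (\mc{B}_1)^{\dg}_{\mc{P},\mc{N}}\n(\mc{A}_1)^{\dg}_{\mc{M},\mc{P}}$ satisfies the four defining equations of Definition~\ref{43} for the $(\mc{M},\mc{N})$-weighted Moore-Penrose inverse of $\mc{X} := \mc{A}\n\mc{B}$.

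First I would derive the factorization $\mc{A}\n\mc{B} = \mc{A}_1\n\mc{B}_1$ together with the absorption identity $(\mc{A}_1)^{\dg}_{\mc{M},\mc{P}}\m\mc{A}_1\n\mc{B}_1 = \mc{B}_1$. MP1 for $\mc{B}^{\dg}_{\mc{P},\mc{I}_L}$ gives $\mc{A}_1\n\mc{B} = \mc{A}\n\mc{B}\1\mc{B}^{\dg}_{\mc{P},\mc{I}_L}\n\mc{B} = \mc{A}\n\mc{B}$, and combining this with the definition of $\mc{B}_1$ and MP1 for $(\mc{A}_1)^{\dg}_{\mc{M},\mc{P}}$ yields $\mc{A}_1\n\mc{B}_1 = \mc{A}_1\n\mc{B} = \mc{A}\n\mc{B}$; the absorption identity is then immediate from MP2 for $(\mc{A}_1)^{\dg}_{\mc{M},\mc{P}}$. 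A useful byproduct, obtained by expanding $\mc{A}_1$ in the definition of $\mc{B}_1$, is $\mc{B}_1 = (\mc{A}_1)^{\dg}_{\mc{M},\mc{P}}\m\mc{A}\n\mc{B}$.

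The crux is the key projector identity
\begin{equation*}
(\mc{A}_1)^{\dg}_{\mc{M},\mc{P}}\m\mc{A}_1 \;=\; \mc{B}_1\1(\mc{B}_1)^{\dg}_{\mc{P},\mc{N}},
\end{equation*}
the direct analog of Eqs.~(4.5.2)--(4.5.4) in the proof of Theorem~\ref{4.6}. From $\mc{A}_1 = \mc{A}\n\mc{B}\1\mc{B}^{\dg}_{\mc{P},\mc{I}_L}$ and $\mc{B}_1 = (\mc{A}_1)^{\dg}_{\mc{M},\mc{P}}\m\mc{A}\n\mc{B}$ one gets $(\mc{A}_1)^{\dg}_{\mc{M},\mc{P}}\m\mc{A}_1 = \mc{B}_1\1\mc{B}^{\dg}_{\mc{P},\mc{I}_L}$; combining this with MP1 for $(\mc{B}_1)^{\dg}_{\mc{P},\mc{N}}$ produces $(\mc{A}_1)^{\dg}_{\mc{M},\mc{P}}\m\mc{A}_1 = \mc{B}_1\1(\mc{B}_1)^{\dg}_{\mc{P},\mc{N}}\n(\mc{A}_1)^{\dg}_{\mc{M},\mc{P}}\m\mc{A}_1$. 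Premultiplying by $\mc{P}$ and taking conjugate transpose, using that both $\mc{P}\n\mc{B}_1\1(\mc{B}_1)^{\dg}_{\mc{P},\mc{N}}$ and $\mc{P}\n(\mc{A}_1)^{\dg}_{\mc{M},\mc{P}}\m\mc{A}_1$ are Hermitian (by MP4 for the respective weighted inverses), and then applying the absorption identity, delivers $\mc{P}\n(\mc{A}_1)^{\dg}_{\mc{M},\mc{P}}\m\mc{A}_1 = \mc{P}\n\mc{B}_1\1(\mc{B}_1)^{\dg}_{\mc{P},\mc{N}}$; invertibility of $\mc{P}$ gives the identity.

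With the factorization and the key identity in hand, the four weighted Moore-Penrose conditions reduce to routine manipulations. Conditions (1) and (2) follow from $\mc{X} = \mc{A}_1\n\mc{B}_1$, the absorption identity, and MP1/MP2 for $(\mc{B}_1)^{\dg}_{\mc{P},\mc{N}}$. Applying the key identity, $\mc{X}\1\mc{Y}$ collapses to $\mc{A}_1\n(\mc{A}_1)^{\dg}_{\mc{M},\mc{P}}$ and $\mc{Y}\m\mc{X}$ collapses to $(\mc{B}_1)^{\dg}_{\mc{P},\mc{N}}\n\mc{B}_1$, so $\mc{M}\m\mc{X}\1\mc{Y}$ and $\mc{N}\1\mc{Y}\m\mc{X}$ inherit Hermitian symmetry from MP3 and MP4 for $(\mc{A}_1)^{\dg}_{\mc{M},\mc{P}}$ and $(\mc{B}_1)^{\dg}_{\mc{P},\mc{N}}$ respectively, yielding conditions (3) and (4). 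The main technical obstacle is the key projector identity, where the $\mc{P}$-Hermitian transpose bookkeeping must be executed carefully---the same delicate step that drives the proof of Theorem~\ref{4.6}; once this is established, the remaining verifications are direct.
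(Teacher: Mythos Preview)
Your proposal is correct and follows essentially the same route as the paper: set $\mc{X}=\mc{A}\n\mc{B}$, $\mc{Y}=(\mc{B}_1)^{\dg}_{\mc{P},\mc{N}}\n(\mc{A}_1)^{\dg}_{\mc{M},\mc{P}}$, establish $\mc{X}=\mc{A}_1\n\mc{B}_1$, derive the projector identity $(\mc{A}_1)^{\dg}_{\mc{M},\mc{P}}\m\mc{A}_1=\mc{B}_1\1(\mc{B}_1)^{\dg}_{\mc{P},\mc{N}}$ via the $\mc{P}$-Hermitian trick (exactly mirroring the argument of Theorem~\ref{4.6}), and then verify the four conditions of Definition~\ref{43}. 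One small labeling slip: the Hermitian symmetry of $\mc{P}\n\mc{B}_1\1(\mc{B}_1)^{\dg}_{\mc{P},\mc{N}}$ comes from condition~(3) (not (4)) of Definition~\ref{43} for $(\mc{B}_1)^{\dg}_{\mc{P},\mc{N}}$, while that of $\mc{P}\n(\mc{A}_1)^{\dg}_{\mc{M},\mc{P}}\m\mc{A}_1$ comes from condition~(4) for $(\mc{A}_1)^{\dg}_{\mc{M},\mc{P}}$.
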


\begin{proof}
Let $ \mc{X} = \mc{A}\n \mc{B} $ and $ \mc{Y} = (\mc{B}_1)^\dg_{\mc{P},\mc{N}}\n (\mc{A}_1)^\dg_{\mc{M},\mc{P}}$. Now we have
\begin{equation}\label{4.5}
    \mc{A}\n \mc{B} = \mc{A}\n \mc{B}\1 \mc{B}^\dg_{\mc{P},\mc{I}_L}\n \mc{B} = \mc{A}_1\n \mc{B} = \mc{A}_1 \n (\mc{A}_1)^\dg_{\mc{M},\mc{P}}\m \mc{A}_1 \n \mc{B} = \mc{A}_1 \n \mc{B}_1.
\end{equation}

Now, using Eq. \eqref{4.5}, we obtain
\begin{eqnarray}\label{451}
 \mc{X}\1\mc{Y} \m \mc{X} 
&=& \mc{A}_1\n\mc{B}_1\1(\mc{B}_1)^\dg_{\mc{P},\mc{N}} \n \mc{B}_1
= \mc{X},\\\label{452}
\mc{Y}\m\mc{X}\1 \mc{Y} 
&=&  (\mc{B}_1)^\dg_{\mc{P},\mc{N}}  \n \mc{B}_1 \1 (\mc{B}_1)^\dg_{\mc{P},\mc{N}}\n (\mc{A}_1)^\dg_{\mc{M},\mc{P}}
= \mc{Y},\\\label{453}
\mc{N}\1\mc{Y}\m\mc{X}
&=& \mc{N}\1 (\mc{B}_1)^\dg_{\mc{P},\mc{N}} \n \mc{B}_1 = (\mc{N}\1\mc{Y}\m\mc{X})^*.
\end{eqnarray}

Further, using the following relations
\begin{equation*}
  \mc{B}_1 \1 \mc{B}^\dg_{\mc{P},\mc{I}_L} =   (\mc{A}_1)^\dg_{\mc{M},\mc{P}}\m \mc{A}_1 ~~and ~~\mc{B}_1 \1 \mc{B}^\dg_{\mc{P},\mc{I}_L} = \mc{B}_1\1(\mc{B}_1)^\dg_{\mc{P},\mc{N}}\n (\mc{A}_1)^\dg_{\mc{M},\mc{P}}\m \mc{A}_1,
\end{equation*}
we have
\begin{equation*}
(\mc{A}_1)^\dg_{\mc{M},\mc{P}}\m \mc{A}_1 = \mc{B}_1\1(\mc{B}_1)^\dg_{\mc{P},\mc{N}}.    
\end{equation*}

It concludes that,
 \begin{equation}\label{454}
 \mc{M}\m \mc{X}\1\mc{Y} = \mc{M}\m \mc{A}_1 \n (\mc{A}_1)^\dg_{\mc{M},\mc{P}}  
 = (\mc{M}\m \mc{X}\1\mc{Y})^*.
 \end{equation}
From the relations \eqref{451}-\eqref{454} validates $ \mc{Y}= \mc{X}^\dg_{\mc{M},\mc{N}}$. Hence $(\mc{A}*_N\mc{B})_{\mc{M},\mc{N}}^{\dg} = (\mc{B}_1)^\dg_{\mc{\mc{P},\mc{N}}}\n (\mc{A}_1)^\dg_{\mc{M},\mc{P}}.$
This completes the proof.
\end{proof}

The significance of the properties of range and null space of arbitrary-order tensors, the last result achieved the sufficient condition for the triple reverse order law of tensor. 

\begin{theorem}\label{uvw2}
Let $ \mc{U}\in \mathbb{C}^{I_1\times\cdots\times I_M \times J_1
\times\cdots\times J_N} $, $ \mc{V}\in \mathbb{C}^{J_1\times\cdots\times J_N \times K_1
\times\cdots\times K_L} $ and \\
$ \mc{W}\in \mathbb{C}^{K_1\times\cdots\times K_L \times H_1
\times\cdots\times H_R} $. Let $\mc{M}  \in \mathbb{C}^{I_1\times\cdots\times I_M \times I_1 \times\cdots\times I_M} $ and $\mc{N} \in \mathbb{C}^{H_1\times\cdots\times H_R \times H_1 \times\cdots\times H_R}$ be a pair of Hermitian positive definite tensors. If 
\begin{equation*}
\mathfrak{R}(\mc{W}) \subseteq \mathfrak{R}[(\mc{U} \n \mc{V})^*] ~~ and~~  \mathfrak{R}(\mc{U}^*) \subseteq \mathfrak{R}(\mc{V}\1\mc{W}).
\end{equation*}
Then 
\begin{equation*}
 (\mc{U}\n\mc{V}\1\mc{W})^\dag_{\mc{M},\mc{N}} = \mc{W}^\dg_{\mc{I}_L,\mc{N}}\1\mc{V}^\dg\n \mc{U}^\dg_{\mc{M},\mc{I}_N}.  
\end{equation*}

\end{theorem}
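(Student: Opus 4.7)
My plan is to reduce the weighted identity to its unweighted Moore--Penrose analog $(\mc{U}\n\mc{V}\1\mc{W})^\dg=\mc{W}^\dg\1\mc{V}^\dg\n\mc{U}^\dg$ under the same two range hypotheses, and then repackage the weights in the style of Theorem~\ref{uvw1}. Setting $\tilde{\mc{U}}=\mc{M}^{1/2}\m\mc{U}$ and $\tilde{\mc{W}}=\mc{W}\f\mc{N}^{-1/2}$, the invertibility of $\mc{M}^{1/2}$ and $\mc{N}^{1/2}$ gives $\mathfrak{R}(\tilde{\mc{W}})=\mathfrak{R}(\mc{W})$, $\mathfrak{R}(\tilde{\mc{U}}^*)=\mathfrak{R}(\mc{U}^*)$, $\mathfrak{R}((\tilde{\mc{U}}\n\mc{V})^*)=\mathfrak{R}((\mc{U}\n\mc{V})^*)$ and $\mathfrak{R}(\mc{V}\1\tilde{\mc{W}})=\mathfrak{R}(\mc{V}\1\mc{W})$, so both hypotheses transfer verbatim to $(\tilde{\mc{U}},\mc{V},\tilde{\mc{W}})$; applying \eqref{2.17} together with Corollary~\ref{3.4} then converts the unweighted identity for $\tilde{\mc{A}}=\mc{M}^{1/2}\m\mc{A}\f\mc{N}^{-1/2}$ back into the stated formula.

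For the unweighted core, I would first harvest four working identities from the hypotheses. From Hypothesis~2 combined with $\mathfrak{R}(\mc{U}^\dg)=\mathfrak{R}(\mc{U}^*)$ (Lemma~\ref{ID}(c)) and Lemma~\ref{ID}(a), one obtains $(\mc{V}\1\mc{W})\f(\mc{V}\1\mc{W})^\dg\n\mc{U}^\dg=\mc{U}^\dg$; chaining $\mathfrak{R}(\mc{U}^*)\subseteq\mathfrak{R}(\mc{V}\1\mc{W})\subseteq\mathfrak{R}(\mc{V})$ through Lemma~\ref{ID}(a) and taking adjoints produces $\mc{U}\n\mc{V}\1\mc{V}^\dg=\mc{U}$. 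Symmetrically, Hypothesis~1 together with $\mathfrak{R}((\mc{W}^\dg)^*)=\mathfrak{R}(\mc{W})$ and Lemma~\ref{IDR1}(a) gives $\mc{W}^\dg\1(\mc{U}\n\mc{V})^\dg\m(\mc{U}\n\mc{V})=\mc{W}^\dg$, and the chain $\mathfrak{R}(\mc{W})\subseteq\mathfrak{R}((\mc{U}\n\mc{V})^*)\subseteq\mathfrak{R}(\mc{V}^*)$ yields $\mc{V}^\dg\n\mc{V}\1\mc{W}=\mc{W}$.

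The heart of the argument is the derivation of two mixed identities,
\[
\mathrm{(A)}\quad \mc{V}\1\mc{W}\f\mc{W}^\dg\1\mc{V}^\dg\n\mc{U}^\dg=\mc{U}^\dg,\qquad
\mathrm{(B)}\quad \mc{W}^\dg\1\mc{V}^\dg\n\mc{U}^\dg\m\mc{U}\n\mc{V}=\mc{W}^\dg.
\]
For (A) I would insert $\mc{U}^\dg=(\mc{V}\1\mc{W})\f(\mc{V}\1\mc{W})^\dg\n\mc{U}^\dg$ at the right end of the left-hand side, collapse the resulting $\mc{V}^\dg\n\mc{V}\1\mc{W}$ to $\mc{W}$, fold $\mc{W}\f\mc{W}^\dg\1\mc{W}$ back to $\mc{W}$, and reapply the same substitution once more. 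Identity (B) is the mirror computation, using the two Hypothesis~1 consequences together with $\mc{U}\n\mc{V}\1\mc{V}^\dg=\mc{U}$. The only delicate step is choosing the correct insertion order in these two identities; after that everything reduces to routine Moore--Penrose shuffling.

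With (A) and (B) in hand, verifying the four defining equations for $\mc{Z}=\mc{W}^\dg\1\mc{V}^\dg\n\mc{U}^\dg$ becomes mechanical: (A) collapses $\mc{A}\f\mc{Z}$ to $\mc{U}\n\mc{U}^\dg$ and (B) collapses $\mc{Z}\m\mc{A}$ to $\mc{W}^\dg\1\mc{W}$, both of which are Hermitian by the Moore--Penrose properties of $\mc{U}$ and $\mc{W}$, settling conditions~(3) and~(4) of Definition~\ref{defmpi}. Condition~(1) reads $\mc{A}\f\mc{Z}\m\mc{A}=\mc{U}\n\mc{U}^\dg\m\mc{U}\n\mc{V}\1\mc{W}=\mc{A}$ via $\mc{U}\n\mc{U}^\dg\m\mc{U}=\mc{U}$, and condition~(2) reads $\mc{Z}\m\mc{A}\f\mc{Z}=\mc{W}^\dg\1\mc{W}\f\mc{W}^\dg\1\mc{V}^\dg\n\mc{U}^\dg=\mc{Z}$ via $\mc{W}^\dg\1\mc{W}\f\mc{W}^\dg=\mc{W}^\dg$. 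Uniqueness of the Moore--Penrose inverse (Theorem~\ref{MPIunique}) then identifies $\mc{Z}$ with $\mc{A}^\dg$, completing the proof.
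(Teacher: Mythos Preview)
Your proof is correct, but it takes a genuinely different route from the paper's.

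The paper first proves, \emph{without} using the hypotheses, a general intermediate identity
\[
(\mc{U}\n\mc{V}\1\mc{W})^\dg \;=\; \mc{W}_1^\dg\1\mc{V}^\dg\n\mc{U}_1^\dg,
\qquad \mc{W}_1=(\mc{U}\n\mc{V})^\dg\m\mc{A},\quad \mc{U}_1=\mc{A}\f(\mc{V}\1\mc{W})^\dg,
\]
entirely via the range-space machinery of Lemmas~\ref{ID} and~\ref{IDR1} (comparing $\mathfrak{R}(\mc{W}_1^*)$, $\mathfrak{R}(\mc{U}_1)$, etc., with the corresponding spaces for $\mc{A}$). Only at the very end are the two hypotheses invoked, to collapse the auxiliary tensors: $\mc{W}_1=\mc{W}$ and $\mc{U}_1=\mc{U}$. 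The weighted version is obtained by the same $\mc{M}^{1/2}$/$\mc{N}^{-1/2}$ substitution you use.

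Your approach instead uses the hypotheses from the outset to derive the two ``mixed'' identities (A) and (B), and then verifies the four defining equations of Definition~\ref{defmpi} directly for $\mc{Z}=\mc{W}^\dg\1\mc{V}^\dg\n\mc{U}^\dg$. This is more elementary and self-contained: no auxiliary tensors $\mc{U}_1,\mc{W}_1$, and no appeal to the equalities in Lemma~\ref{ID}(b)/\ref{IDR1}(b). The trade-off is that the paper's intermediate identity is hypothesis-free and hence reusable (it is essentially the engine behind Theorem~\ref{uvw1} as well), whereas your argument is tailored specifically to the present statement. Both arguments handle the passage to weights in the same way, via~\eqref{2.17} and Corollary~\ref{3.4}.
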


\begin{proof}
 Let $ \mc{A} = \mc{U}\n\mc{V}\1\mc{W}, ~~  \mc{W}_1 = (\mc{U}\n \mc{V})^\dg \m \mc{A} $  and $ \mc{U}_1 = \mc{A} \f (\mc{V}\1 \mc{W})^\dag $.
 From Eq.\eqref{lemma37}, we get
 \begin{eqnarray*}
 \mathfrak{R}(\mc{U}_1) 
 = \mathfrak{R}(\mc{A}) 
~~ \textnormal{and}~~ \mathfrak{R}(\mc{W}_1^*) 
= \mathfrak{R}(\mc{A}^*).
 \end{eqnarray*}
 Also from Lemma \ref{ID}(c), we get  
 \begin{eqnarray*}
 &&\mathfrak{R}[(\mc{W}_1^\dag)^*] =\mathfrak{R}(\mc{W}_1) \subseteq \mathfrak{R}[(\mc{U}*_N\mc{V})^\dag]  =\mathfrak{R}[(\mc{U}*_N\mc{V})^*]\\
 &&\hspace{3cm}\textnormal{and}~~ \mathfrak{R}(\mc{U}_1^\dag) = \mathfrak{R}(\mc{U}_1^*) \subseteq \mathfrak{R}[(\mc{V}\1\mc{W})^\dag]^* = \mathfrak{R}(\mc{V}\1\mc{W}).
 \end{eqnarray*}

Applying Lemma~\ref{ID}[(a),(b)] and Lemma~\ref{IDR1}[(a),(b)], we have
\begin{eqnarray*}
\mc{W}_1^\dag *_L\mc{V}^\dag *_N\mc{U}_1^\dag 
&=&  \mc{W}_1^\dag *_L\mc{W}_1*_R(\mc{V}*_L\mc{W})^\dag *_N\mc{U}_1^\dag 
= \mc{A}^\dg \m \mc{U}_1 \n \mc{U}_1^\dg 
=  \mc{A}^\dg,
\end{eqnarray*}
which is equivalent to
 \begin{equation}\label{uvw2.1}
   (\mc{U}\n\mc{V}\1 \mc{W})^\dg = [(\mc{U}\n\mc{V})^\dg \m \mc{A}]^\dg \1 \mc{V}^\dg \n [\mc{A}\f (\mc{V}\1\mc{W})^\dg]^\dg. 
\end{equation}
Replacing $ \mc{U}$ and $\mc{W} $ by $ \mc{M}^{1/2}\m\mc{U} $ and $\mc{W}\f \mc{N}^{-1/2} $ in Eq.\eqref{uvw2.1} along with  using Eq.\eqref{2.17} and Lemma~\ref{42}[(a),(b)] we have
\begin{eqnarray*}
 \mc{A}^\dg_{\mc{M},\mc{N}} 
&=& \mc{N}^{-1/2}\f [(\mc{M}^{1/2}\m\mc{U}\n \mc{V})^\dg\m \mc{M}^{1/2}\m \mc{A}\f \mc{N}^{-1/2}]^\dg \1 \mc{V}^\dg*_N \\
&& \hspace{3cm}
 [\mc{M}^{1/2}\m \mc{A}\f \mc{N}^{-1/2}\f (\mc{V}\1\mc{W}\f\mc{N}^{-1/2})]^\dg \m \mc{M}^{1/2} \\
&=& \mc{N}^{-1/2}\f [\mc{W}_1\f\mc{N}^{-1/2}]^\dg \1 \mc{V}^\dg \n [\mc{M}^{1/2}\m \mc{U}_1]^\dg \m \mc{M}^{1/2},\\
&=& (\mc{W}_1)^\dg_{\mc{I}_L,\mc{N}}\1\mc{V}^\dg \n (\mc{U}_1)^\dg_{\mc{M},\mc{I}_N}.
\end{eqnarray*}
Applying Lemma \ref{ID}[(a),(c)] and Lemma \ref{IDR1}(a) in the given condition, we get
\begin{eqnarray*}
\mc{W} = (\mc{U}\n\mc{V})^\dg \m (\mc{U}\n\mc{V})\1\mc{W} = \mc{W}_1\textnormal{~~ and~~} \mc{U} = \mc{U}\n (\mc{V}\1\mc{W})\f(\mc{V}\1\mc{W})^\dg = \mc{U}_1.
\end{eqnarray*}
Hence, 
$(\mc{U}\n\mc{V}\1\mc{W})^\dg_{\mc{M},\mc{N}} = \mc{W}^\dg_{\mc{I}_L,\mc{N}}\1\mc{V}^\dg\n \mc{U}^\dg_{\mc{M},\mc{I}_N}.$
\end{proof}

\section{Conclusion}
In this paper, a novel SVD and full rank-decomposition of arbitrary-order tensors using reshape operation is developed.  Using this decomposition, we have studied the Moore-Penrose and general weighted Moore-Penrose inverse for arbitrary-order tensors via the Einstein product.  We have also added some results on the range and null spaces to the existing theory.  Then we discuss a few characterizations of cancellation properties for Moore-Penrose inverse of tensors. In addition to these, we have discussed the reverse-order laws for weighted Moore-Penrose inverses. 
In the future, it will be more interesting to express additional identities of weighted Moore-Penrose inverse in terms of the ordinary Moore-Penrose inverse for arbitrary-order tensor.

{\bf\large{Acknowledgments}}\\
The first and the third authors are grateful to the Mohapatra Family Foundation and the College of Graduate Studies, University of Central Florida, Orlando, for their financial support for this research. In addition to this the first author is grateful for the partially supported by Science and Engineering Research Board (SERB), Department of Science and Technology, India, under the Grant No. EEQ/2017/000747.

\bibliographystyle{amsplain}
\bibliography{WMPIUCF}
\end{document}